\documentclass[11pt]{amsart}

\bibliographystyle{alpha}

\usepackage{amsmath,amsthm,amsfonts,amssymb,mathrsfs,mathdots}
\usepackage[margin=1in]{geometry}
\usepackage[bookmarks]{hyperref}
\usepackage{verbatim}
\usepackage{appendix}
\usepackage{todonotes}
\usepackage{tikz}
\usetikzlibrary{matrix,arrows}
\usepackage{fancyvrb }
\usepackage{color}
\usetikzlibrary{arrows}

\newtheorem*{theo}{Theorem}
\newtheorem*{coro}{Corollary}

\newtheorem{theorem}{Theorem}[section]

\newtheorem{proposition}[theorem]{Proposition}

\newtheorem*{conjecture}{Conjecture}

\newtheorem{corollary}[theorem]{Corollary}

\newtheorem{lemma}[theorem]{Lemma}
\input xy
\xyoption{all}

\theoremstyle{definition}

\newtheorem{prop}[theorem]{Proposition}

\newtheorem{remark}[theorem]{Remark}

\DeclareMathOperator{\rank}{rank}

\DeclareMathOperator{\chr}{char}

\DeclareMathOperator{\Ext}{Ext}

\DeclareMathOperator{\opH}{H}

\DeclareMathOperator{\Lie}{Lie}

\DeclareMathOperator{\red}{red}

\DeclareMathOperator{\GL}{GL}
\DeclareMathOperator{\SL}{SL}
\DeclareMathOperator{\SO}{SO}

\DeclareMathOperator{\ad}{ad}
\DeclareMathOperator{\Ad}{Ad}

\newcommand{\calO}{\mathcal{O}}

\newcommand{\calV}{\mathcal{V}}

\newcommand{\spec}{\text{Spec}}

\newcommand{\g}{\mathfrak{g}}

\newcommand{\fraku}{\mathfrak{u}}
\newcommand{\z}{\mathfrak{z}}

\newcommand{\Hbul}{\opH^\bullet}                                  

\newcommand{\N}{\mathcal{N}}

\newcommand{\0}{\mathcal O}

\begin{document}

\title[Commuting varieties and cohomological complexity theory]{Commuting varieties and cohomological complexity theory}

\author{Paul D. Levy}
\address{Department of Mathematics and Statistics \\ Lancaster University \\ Lancaster \\ LA1 4YW, UK}
\email{p.d.levy@lancaster.ac.uk}

\author{Nham V. Ngo}
\address{Department of Mathematics\\ University of North Georgia--Gainesville \\ Oakwood\\ GA~30566, USA}
\email{nvngo@ung.edu}

\author{Klemen \v{S}ivic}
\address{University of Ljubljana, Faculty of mathematics and physics, Jadranska ulica 19, 1000 Ljubljana, Slovenia}
\email{klemen.sivic@fmf.uni-lj.si}

\begin{abstract}
In this paper we determine, for all $r$ sufficiently large, the irreducible component(s) of maximal dimension of the variety of commuting $r$-tuples of nilpotent elements of $\mathfrak{gl}_n$.
Our main result is that in characteristic $\neq 2,3$, this nilpotent commuting variety has dimension $(r+1)\lfloor \frac{n^2}{4}\rfloor$ for $n\geq 4$, $r\geq 7$.
We use this to find the dimension of the (ordinary) $r$-th commuting varieties of $\mathfrak{gl}_n$ and $\mathfrak{sl}_n$ for the same range of values of $r$ and $n$.

Our principal motivation is the connection between nilpotent commuting varieties and cohomological complexity of finite group schemes, which we exploit in the last section of the paper to obtain explicit values for complexities of a large family of modules over the $r$-th Frobenius kernel $(\GL_n)_{(r)}$.
These results indicate an inequality between the complexities of a rational $G$-module $M$ when restricted to $G_{(r)}$ or to $G(\mathbb F_{p^r})$; we subsequently establish this inequality for every simple algebraic group $G$ defined over an algebraically closed field of good characteristic, significantly extending the main theorem in \cite{LN}.   
\end{abstract}

\maketitle

\section{Introduction}

\subsection{Commuting Varieties}

Let $k$ be an algebraically closed field and let $r$ be a positive integer.
Let $G$ be a linear algebraic group defined over $k$ and $\g=\Lie(G)$.
For each closed subvariety $V$ of $\g$, denote by
\[ C_r(V)=\{ (x_1,\ldots,x_r)\in V^r:[x_i,x_j]=0, 1\le i,j \le r \} \]
the variety of commuting $r$-tuples of elements of $V$.
For brevity, we will call $C_r(V)$ the $r$-th commuting variety of $V$, or just the commuting variety of $V$ if there is no potential for confusion.
Two special cases which have received considerable interest are the {\it ordinary commuting variety} $C_r(\g)$, and the {\it nilpotent commuting variety} $C_r(\N(\g))$, where $\N(\g)$ is the nilpotent cone of $\g$.
Basic geometric properties of these varieties, such as the number and dimensions of irreducible components, are unknown except for small values of $r$ or $n$.

The first results on ordinary commuting varieties were the independent proofs by Motzkin-Taussky \cite{M-T:1995} and Gerstenhaber \cite{G:1961} that  $C_2(\mathfrak{gl}_n)$ is irreducible.
Subsequently, this was extended to the Lie algebra of an arbitrary reductive algebraic group by Richardson \cite{R:1979} in characteristic zero, and by the first author \cite{L:2003} in positive characteristic (under mild hypotheses).
It was observed in \cite{G:1961} that the irreducibility of $C_r(\mathfrak{gl}_n)$ can fail for $r\geq 4$.
Guralnick showed \cite{Gu:1992} that $C_r(\mathfrak{gl}_n)$ is irreducible for $n\le 3$ and any $r$, while it is reducible for all $n, r\ge 4$.

Although nilpotent commuting varieties have received rather less attention until relatively recently, interest in $C_2(\N(\mathfrak{gl}_n))$ has been stimulated by a close relationship with the punctual Hilbert scheme of $n$ points in the plane.
This connection was exploited by Baranovsky in his proof \cite{Bar:2001} of the irreducibility of $C_2(\N(\mathfrak{gl}_n))$ for ${\rm char}(k)=0$ or ${\rm char}(k)\geq n$.
Baranovsky's result was extended to ${\rm char}(k)\geq \frac{n}{2}$ by Basili \cite{Bas}, and to arbitrary characteristic by Premet \cite{Pr:2003}, as a special case of equidimensionality of the (second) nilpotent commuting variety of the Lie algebra of a reductive algebraic group.
For larger values of $r$, work of the second and third authors shows that $C_r(\N(\mathfrak{gl}_n))$ is irreducible for $n\le 3$ and any $r$, and is reducible for all $r, n\ge 4$ \cite{N:2014}, \cite{NS}.
Apart from these reducibility results, little is known about these varieties for general $r$ and $n$.


In this paper, we study the dimensions of the ordinary and nilpotent commuting varieties for $\g=\mathfrak{gl}_n$.
The main initial idea is the following: let ${\mathfrak m}$ be a commutative nil (i.e. consisting of nilpotent elements) subalgebra of $\g$ of maximal dimension.
It is clear that for $r$ large enough, the dimension of the  linear span of a general $r$-tuple from $G\cdot \mathfrak{m}^r$ is not smaller than the dimension of the linear span of any $r$-tuple from $C_r(\N(\g))$.
Since this is an open condition and there are, up to conjugacy, at most two commutative nil subalgebras of $\g$ of maximal dimension, the subvariety $\overline{G\cdot{\mathfrak m}^r}$ is an irreducible component of $C_r(\N(\g))$ of dimension $r\dim{\mathfrak m}+\dim G-\dim N_G({\mathfrak m})$.
For $\g$ of type $A$, the commutative nil subalgebras of maximal dimension were studied in \cite{CFP}; the remaining types have been dealt with in the subsequent paper \cite{PS}, generalizing results of Malcev in characteristic zero \cite{Mal}.
Subsets $G\cdot \mathfrak{m}^r$ for various classical $\g$ were also investigated by the second author in \cite{Ngo:2014}.
Most of our effort in the first part of this paper will be directed towards showing that (under mild conditions on $r$) no irreducible component of $C_r(\N(\mathfrak{gl}_n))$ can have dimension greater than $\dim (G\cdot{\mathfrak m}^r)$.
As a consequence, we obtain (see Theorem \ref{precise_dim_commuting_nilpotents}):

\begin{theo}
Assume ${\rm char}\, k\neq 2,3$, $n\geq 4$, and $r\geq 7$.
Let $G=\GL_n$ and $\g=\mathfrak{gl}_n$.
Then
\[ \dim C_r(\N(\g))= (r+1)\lfloor \frac{n^2}{4}\rfloor,\]
and the irreducible components of maximal dimension are the subsets of the form $G\cdot{\mathfrak m}^r$ where ${\mathfrak m}$ is a commutative nil subalgebra of maximal dimension; there is one such component if $n$ is even, and two if $n$ is odd.
\end{theo}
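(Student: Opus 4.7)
The lower bound is essentially given in the introduction: take a maximal commutative nil subalgebra $\mathfrak{m}$ of $\g$, whose existence and classification up to $G$-conjugacy is established in \cite{CFP}. The normalizer $N_G(\mathfrak{m})$ is a maximal parabolic with Levi type $\GL_{\lceil n/2\rceil}\times \GL_{\lfloor n/2\rfloor}$, so $\dim G-\dim N_G(\mathfrak{m})=\lfloor n^2/4\rfloor$, and a standard verification that the natural map $G\times_{N_G(\mathfrak{m})}\mathfrak{m}^r\to C_r(\N(\g))$ is generically injective yields $\dim\overline{G\cdot \mathfrak{m}^r}=(r+1)\lfloor n^2/4\rfloor$. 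Once the matching upper bound is in hand, each of these (one for even $n$, two for odd $n$) closed irreducible subvarieties of $C_r(\N(\g))$ will automatically be a maximal component.

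For the upper bound, let $X$ be an irreducible component of $C_r(\N(\g))$, and for a generic $\mathbf{x}=(x_1,\ldots,x_r)\in X$ let $\mathfrak{a}(\mathbf{x})$ denote the (non-unital) commutative nil subalgebra of $\g$ generated by $x_1,\ldots,x_r$. By Schur's bound, $e:=\dim\mathfrak{a}(\mathbf{x})\leq \lfloor n^2/4\rfloor$, and by irreducibility $e$ is constant on a dense open subset of $X$. Writing $\calA_e\subseteq \text{Gr}_e(\g)$ for the quasi-projective variety of $e$-dimensional commutative nil subalgebras and forming the incidence variety
\[
Z_e=\{(\mathfrak{a},\mathbf{y})\in\calA_e\times\g^r : \mathbf{y}\in \mathfrak{a}^r\},
\]
the morphism $\mathbf{x}\mapsto (\mathfrak{a}(\mathbf{x}),\mathbf{x})$ injects a dense open subset of $X$ into $Z_e$; since the first projection of $Z_e$ has $re$-dimensional fibres, $\dim X\leq re+\dim\calA_e$. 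Everything thus reduces to proving
\[
\dim\calA_e\leq \lfloor n^2/4\rfloor + r(\lfloor n^2/4\rfloor - e)\quad\text{for every }1\leq e\leq\lfloor n^2/4\rfloor.
\]

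This inequality is the main obstacle of the proof and accounts for the hypothesis $r\geq 7$. The plan is to stratify $\calA_e$ by discrete $G$-invariants---most naturally the Jordan type of a generic element of $\mathfrak{a}$ together with the dimension vector of the descending chain $\mathfrak{a}\supseteq\mathfrak{a}^2\supseteq\cdots$---and to bound each stratum by realizing it as (the image of) a homogeneous bundle over an appropriate flag variety. The extremal case $e=\lfloor n^2/4\rfloor$ reduces directly to \cite{CFP}, and very small $e$ is handled by crude Grassmannian estimates that the large linear term $r(\lfloor n^2/4\rfloor-e)$ easily absorbs; the intermediate range is where the hypothesis $r\geq 7$ is presumably tight and where the case analysis is most delicate, with the exclusion of characteristics $2,3$ entering through the structural results of \cite{CFP} and \cite{PS} invoked here. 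Granting the inequality, the uniqueness statement for the maximal components follows immediately: equality $\dim X=(r+1)\lfloor n^2/4\rfloor$ forces $e=\lfloor n^2/4\rfloor$, whence $\mathfrak{a}(\mathbf{x})$ is $G$-conjugate to one of the $\mathfrak{m}$'s from \cite{CFP}, and $X=\overline{G\cdot \mathfrak{m}^r}$ follows by dimension comparison.
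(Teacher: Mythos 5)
Your lower bound argument is correct and matches the paper's: the components $\overline{G\cdot\mathfrak{m}^r}$ are constructed exactly as in Cor.~\ref{square_zeroC'(x)}, and your fiber computation is the standard one (one should note that $G$-injectivity already holds for a single generic $m\in\mathfrak{m}$, not just for spanning tuples, so no lower bound on $r$ is needed here). Your reduction of the upper bound to the incidence variety $Z_e$ and the inequality
\[
\dim\calA_e\leq \lfloor n^2/4\rfloor + r\bigl(\lfloor n^2/4\rfloor - e\bigr),\qquad 1\le e\le\lfloor n^2/4\rfloor,
\]
is formally valid and is a genuinely different stratification from the paper's (which stratifies by the nilpotent orbit of a linear combination of the $x_i$, using the $\GL_r$-action on tuples, and then bounds $\dim C'_{r-1}(\z_\g(x))$ for each orbit representative $x$).

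However, the proposal has a genuine gap: the key inequality on $\dim\calA_e$ is never proved, and the plan sketched for proving it does not work as stated. In particular, your claim that crude Grassmannian estimates absorb the small-$e$ range is already false at $e=2$: the estimate $\dim\calA_e\le e(n^2-e)$ gives $2n^2-4$, while the target $(r+1)\lfloor n^2/4\rfloor - 2r$ equals $2n^2-14$ when $r=7$ and $n$ is even, so the Grassmannian bound only covers $e=1$. Thus the ``delicate intermediate range'' you defer is essentially the entire problem, and the outline (stratify $\calA_e$ by Jordan type and the dimension vector of the filtration $\mathfrak{a}\supseteq\mathfrak{a}^2\supseteq\cdots$, bound each stratum by a homogeneous-bundle argument) is too vague to evaluate; carrying it out would require detailed dimension estimates comparable in volume to the paper's Lemmas~\ref{y_iz_j=y_jz_i}, \ref{y,z,w,v,u} and \ref{GLblocksize4} together with an induction analogous to the one in Thm.~\ref{precise_dim_commuting_nilpotents}. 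In short, the change of variables from tuples to the algebra they generate is a plausible and interesting reformulation, but the hard part of the theorem has been restated rather than solved; as it stands the argument establishes only the lower bound and the (conditional) uniqueness of maximal components.
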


We can apply this result to determine the irreducible components of maximal dimension of the $r$-th ordinary commuting varieties of $\mathfrak{gl}_n$ and $\mathfrak{sl}_n$, for the same range of values of $r$ and $n$ (see Cor. \ref{precise_dim_commuting_matrices} and Cor. \ref{precise_dim_commuting_matrices_sl}).

\begin{coro}
Assume ${\rm char}\, k\neq 2,3$, $n\geq 4$ and $r\geq 7$.
Exclude the case $(n,r)=(4,7)$.
Then $\dim C_r(\mathfrak{gl}_n)=(r+1)\lfloor \frac{n^2}{4}\rfloor + r$ and $\dim C_r(\mathfrak{sl}_n)=\dim C_r(\N(\mathfrak{gl}_n))$ unless ${\rm char}\, k|n$, in which case $\dim C_r(\mathfrak{sl}_n)=\dim C_r(\mathfrak{gl}_n)$.
\end{coro}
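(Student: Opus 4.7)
The plan is to stratify $C_r(\mathfrak{gl}_n)$ by the common eigenspace type of the simultaneous Jordan decomposition. A commuting tuple $(x_1,\ldots,x_r)\in C_r(\mathfrak{gl}_n)$ has commuting semisimple parts $s_i$ and commuting nilpotent parts $N_i$ with every $s_i$ commuting with every $N_j$; the common eigenspace decomposition of $(s_1,\ldots,s_r)$ yields a direct sum $k^n=V_1\oplus\cdots\oplus V_t$ of type $\mathbf{n}=(n_1,\ldots,n_t)$. Fixing the block-diagonal Levi $L_{\mathbf{n}}=\prod_j\GL_{n_j}\le G=\GL_n$, the locally closed $G$-stable stratum $S_{\mathbf{n}}\subseteq C_r(\mathfrak{gl}_n)$ of tuples of type $\mathbf{n}$ is the image of the (generically finite) map
\[
G\times^{L_{\mathbf{n}}}\Bigl(\mathfrak{z}(\Lie L_{\mathbf{n}})^r\times\prod_j C_r(\N(\mathfrak{gl}_{n_j}))\Bigr)\to C_r(\mathfrak{gl}_n),
\]
giving $\dim S_{\mathbf{n}}=\bigl(n^2-\sum_j n_j^2\bigr)+rt+\sum_j\dim C_r(\N(\mathfrak{gl}_{n_j}))$.

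Applying Theorem~\ref{precise_dim_commuting_nilpotents} for $n_j\geq 4$ together with the equality $\dim C_r(\N(\mathfrak{gl}_m))=(r+1)\lfloor m^2/4\rfloor$ for $m\leq 3$ (which follows from the irreducibility in those ranks), one obtains $\dim S_{\mathbf{n}}=n^2+\sum_j\psi(n_j)$, where $\psi(m):=(r+1)\lfloor m^2/4\rfloor-m^2+r$. The problem reduces to showing $\max_{\mathbf{n}}\sum_j\psi(n_j)=\psi(n)$ for $n\geq 4$, $r\geq 7$, $(n,r)\neq(4,7)$. Computing $\psi(1)=r-1$ together with the ratios $\psi(m)/m$ reveals that $\psi(m)/m<\psi(1)$ exactly when $m\in\{2,3,4\}$, with equality in the boundary cases $(m,r)\in\{(5,7),(4,8)\}$. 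A partition-level case check then shows that $\mathbf{n}=(n)$ achieves the maximum in all required cases (with harmless ties at $(n,r)\in\{(4,8),(5,7)\}$), while the excluded pair $(n,r)=(4,7)$ satisfies $4\psi(1)=24>23=\psi(4)$, picking up a single extra dimension from the all-singleton stratum $\mathbf{n}=(1,1,1,1)$.

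For $\mathfrak{sl}_n$, the same stratification applies under the constraint that each semisimple part has trace zero. When $\chr k\nmid n$, this constraint is nondegenerate on $\mathfrak{z}(\Lie L_{\mathbf{n}})^r$ and removes exactly $r$ dimensions from every stratum, so $\dim C_r(\mathfrak{sl}_n)=\dim C_r(\mathfrak{gl}_n)-r=(r+1)\lfloor n^2/4\rfloor=\dim C_r(\N(\mathfrak{gl}_n))$. When $\chr k\mid n$, the scalar matrices $cI$ lie in $\mathfrak{sl}_n$, so the $(n)$-stratum retains its full $r$-dimensional space of scalar semisimple parts and $\dim C_r(\mathfrak{sl}_n)=\dim C_r(\mathfrak{gl}_n)$. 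The main obstacle in this plan is the combinatorial optimization in the second paragraph: a careful case check is needed to isolate $(n,r)=(4,7)$ as the unique exception and to verify that the ties at $(4,8)$ and $(5,7)$ preserve the stated dimension formula.
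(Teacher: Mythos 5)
The overall strategy is the same as the paper's (Cor.~\ref{precise_dim_commuting_matrices} and Cor.~\ref{precise_dim_commuting_matrices_sl}): stratify $C_r(\mathfrak{gl}_n)$ by the common eigenspace type of the semisimple parts, obtain a dimension formula for each stratum in terms of $\dim C_r(\N(\mathfrak{gl}_{n_j}))$ and the codimension of the Levi, then optimize over partitions of $n$. Your $\psi(m)$ bookkeeping is a nice way to organize the paper's inequality \eqref{ineq}. However, there is a concrete error in the input data: the claim $\dim C_r(\N(\mathfrak{gl}_m))=(r+1)\lfloor m^2/4\rfloor$ for $m\le 3$ is \emph{false} for $m=3$. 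Irreducibility (used in the paper via Remark~\ref{glbrem} and \cite{NS}) gives $\dim C_r(\N(\mathfrak{gl}_m))=(m-1)(r+m-1)$ (the dimension of the regular component), which agrees with $(r+1)\lfloor m^2/4\rfloor$ only for $m\le 2$. For $m=3$ the correct value is $2(r+2)=2r+4$, not $2(r+1)=2r+2$, so the correct $\psi(3)$ is $3r-5$, not your $3r-7$. This discrepancy happens to be in the ``harmless'' direction (you underestimate competing strata by $2$), so the final partition optimization would still succeed, but the justification ``follows from the irreducibility'' is wrong and a careless reader following your numbers would get $\psi(3)$ wrong. Relatedly, the statement ``$\psi(m)/m<\psi(1)$ exactly when $m\in\{2,3,4\}$'' is too loose as stated -- it depends on $r$ (for $m=4$ it holds only for $r=7$; for $r\ge 8$ it fails); you do recover in the next clause, but the structure of the case check needs to be made precise. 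Finally, the key ``partition-level case check'' is asserted rather than carried out; this is exactly the content of the paper's proof of Cor.~\ref{precise_dim_commuting_matrices}, including its use of \cite{Gu:1992} and \cite[Lemma~5]{Si} to identify the generic component when all blocks have size $\le 3$, and the estimates \eqref{m,l}. The $\mathfrak{sl}_n$ part of your proposal is sound and matches the paper's Cor.~\ref{precise_dim_commuting_matrices_sl}.
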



In a subsequent paper \cite{LNS}, we will apply a similar analysis to the $r$-th nilpotent commuting variety of the symplectic Lie algebra $\mathfrak{sp}_{2n}$.
The dimension is $(r+1)n(n+1)/2$ for large enough $n$ and $r$.
Note that in this case ${\mathfrak m}$ has dimension $n(n+1)/2$, and this also equals the codimension (in ${\rm Sp}_{2n}$) of the normalizer of ${\mathfrak m}$.
In general we make the following (see \cite{PS} for background on the variety ${\mathbb E}(d,\g)$):

\begin{conjecture}
Let $\g={\rm Lie}(G)$ where $G$ is a simple algebraic group.
For $r$ sufficiently large, we have
$$\dim C_r(\N(\g))=dr+m$$
where $d$ is the maximum dimension of an elementary subalgebra of $\g$ and $m=\dim {\mathbb E}(d,\g)$ is the dimension of the projective variety of all $d$-dimensional elementary subalgebras of $\g$.
\end{conjecture}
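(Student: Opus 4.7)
The natural plan is to extend the approach of the main theorem via a span-stratification argument for general simple $\g$ (in good characteristic). The idea is to use an incidence variety over $\mathbb{E}(d,\g)$ to establish the lower bound $\dim C_r(\N(\g))\geq rd+m$, and to obtain the matching upper bound by stratifying $C_r(\N(\g))$ according to the dimension of the span of the tuple.

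For the lower bound, fix a top-dimensional irreducible component $X\subseteq\mathbb{E}(d,\g)$, and consider the incidence variety
$$Z = \{(\mathfrak{e}, x_1, \ldots, x_r) : \mathfrak{e} \in X,\ x_i \in \mathfrak{e}\},$$
a rank-$rd$ vector bundle over $X$, hence irreducible of dimension $rd+m$. The projection $Z\to C_r(\N(\g))$ is generically finite once $r\geq d$, since a generic $r$-tuple inside a $d$-dimensional elementary subalgebra spans it; its image is therefore an irreducible closed subvariety of $C_r(\N(\g))$ of dimension exactly $rd+m$.

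For the upper bound, stratify $C_r(\N(\g))$ by the dimension of the span: let $C_r^{(e)}:=\{(x_i)\in C_r(\N(\g)):\dim\langle x_1,\ldots,x_r\rangle=e\}$, a locally-closed subvariety. In good characteristic the span of commuting nilpotents is $[p]$-closed, so the span map defines a morphism $C_r^{(e)}\to\mathbb{E}(e,\g)$ whose fibers are open subsets of $\mathfrak{e}^r$ and hence of dimension at most $re$. Therefore $\dim C_r^{(e)}\leq re+\dim\mathbb{E}(e,\g)$. Since $\dim\mathbb{E}(e,\g)$ is a fixed integer independent of $r$, for $r$ exceeding $r_0:=\max_{0\leq e<d}(\dim\mathbb{E}(e,\g)-m)$ the quantity $re+\dim\mathbb{E}(e,\g)$ is maximized over $e\leq d$ at $e=d$, yielding $\dim C_r(\N(\g))\leq rd+m$ and hence the conjectured equality.

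The principal difficulty is not the existence of such an $r_0$, but obtaining a sharp value. The crude bound $\dim\mathbb{E}(e,\g)\leq e(\dim\g-e)$ yields an impractically large threshold. The main theorem of the present paper establishes $r_0\leq 6$ for $\g=\mathfrak{gl}_n$ only after a delicate case analysis of ``near-generic'' commuting tuples, going well beyond what the naive inequality delivers. A uniform statement for all simple Lie algebras would presumably require either structural insight into the intermediate varieties $\mathbb{E}(e,\g)$, or a type-by-type extension of the classification in \cite{PS}; one would also need to verify in each characteristic range that the span map truly lands in $\mathbb{E}$, which fails if $p$ is small enough that nilpotents $x\in\g$ can have $x^{[p]}\neq 0$.
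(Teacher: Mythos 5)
The statement you are reviewing is labelled a \emph{Conjecture} in the paper, and the paper contains no proof of it: the authors establish the analogous formula only for $\g=\mathfrak{gl}_n$ (Theorem \ref{precise_dim_commuting_nilpotents}) and explicitly leave the general case open. So there is no argument of theirs to compare yours against, and no amount of outline on your part should be read as reproducing a proof from the text.

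On its own terms, your span-stratification sketch is sound \emph{as an asymptotic argument provided $p\geq h$}, so that $\N(\g)=\N_{[p]}(\g)$ and the span of every commuting $r$-tuple of nilpotents is automatically an elementary subalgebra. In that regime the two halves fit together: the incidence variety over a top component of $\mathbb{E}(d,\g)$ gives $\dim C_r(\N(\g))\geq rd+m$ once $r\geq d$ (a condition you should include in your $r_0$), and the stratification $C_r^{(e)}\to\mathbb{E}(e,\g)$ gives the matching upper bound for $r$ exceeding $\max_{e<d}\bigl(\dim\mathbb{E}(e,\g)-m\bigr)$. You correctly identify the two places where the argument stops short of the conjecture as stated. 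First, for smaller good $p$ the span map lands in the variety of $e$-dimensional commutative nil subalgebras rather than in $\mathbb{E}(e,\g)$, and to salvage the formula one must invoke the classification in \cite{PS} (extending Malcev) to know that the top-dimensional commutative nil subalgebras are all elementary and that the two parameter spaces have the same dimension $m$; without that input the conjectured formula need not even be the right one. Second, and more to the point, your threshold $r_0$ is of the size of $\dim\mathrm{Gr}(e,\g)$, while the entire technical burden of \S\ref{technical}--\S\ref{bounding} is to drive this down to a uniform $r\geq 7$ for $\mathfrak{gl}_n$; the naive stratification cannot see the interplay between Jordan types that the paper exploits, so it reproduces the ``easy'' asymptotic statement but not the quantitative one. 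In short: a reasonable strategy, honestly hedged, but not a proof of the conjecture as written, and in any case not something the paper itself purports to prove.
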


This conjecture should be seen in the context of Friedlander's support varieties for rational representations, see \cite{Friedlander-rational}; the formula $dr+m$  can be thought of as (conjecturally, in general) the asymptotic dimension of the topological space $V(G)$ of 1-parameter subgroups of $G$.

\subsection{Complexity of modules over Frobenius kernels}
After establishing our main theorem, we apply our results on nilpotent commuting varieties to cohomology of infinitesimal subgroups of $G$. Assuming $p>0$, let $F_r:G\to G$ be the $r$-{th} Frobenius morphism of $G$, defined on matrices as $(a_{ij})\mapsto(a^{p^r}_{ij})$.
Then the $r$-{th} \emph{Frobenius kernel} of $G$, denoted by $G_{(r)}$, is defined to be the scheme-theoretic kernel of $F_r$.
Note that $G_{(r)}$ is an {\it infinitesimal} group scheme, that is, it has only one rational point over the base field $k$.
The group structure comes into play when considering points over arbitrary $k$-algebras, or (equivalently) in the Hopf algebra structure of the (dual of the) finite-dimensional coordinate ring. (See \cite{Jan:2003} for an account of the background theory.)

The theory of support varieties for finite groups, and later for finite group schemes, was motivated by the idea that geometric methods might help to shed light on their (modular) representation theory.
In our case, one considers the even cohomology ring $\opH^{ev}(G_{(r)},k)$, which turns out to be commutative and finitely generated over $k$ \cite[Theorem 1.1]{FS:1997}, so its maximal ideal spectrum is an affine variety, the {\em support variety} of the trivial $G_{(r)}$-module.
By \cite[Theorem 5.2]{SFB2:1997} and \cite[Lemma 1.7]{SFB1:1997} this variety is homeomorphic to the $r$-th commuting variety $C_r(\N_{[p]}(\g))$, where $\N_{[p]}(\g)$ is the set of matrices in $\g$ with $p$-th power zero.
We therefore obtain, from our results on $C_r({\mathcal N}(\g))$, the Krull dimension of the ring $\opH^{ev}(G_{(r)},k)$ for $G=\GL_n(k)$. 
This dimension is the \emph{complexity} of the trivial module over $G_{(r)}$.
More generally, to an arbitrary $G_{(r)}$-module $M$ one associates a certain subvariety of $C_r(\N_{[p]}(\g))$; the dimension of this subvariety is the complexity of $M$.
In Theorem \ref{maxcomplex}, we deduce a criterion on the highest weight (assuming $p{>} n^3/4$) for the restriction to $G_{(r)}$ of a simple $G$-module to have maximal complexity.
Up to a point, this reduces the determination of the complexity of a given simple $G_{(r)}$-module to the study of the support varieties of the various simple restricted $\g$-modules arising from the $p$-adic decomposition of the highest weight.

Inspired by the results outlined in the previous paragraph, at the end of the paper we explore further a connection between the complexities of Frobenius kernels and finite groups of Lie type.
To explain this, let $G(\mathbb F_{p^r})$ be the group consisting of all $\mathbb F_{p^r}$-rational points of $G$, a finite Chevalley group.
Similarly to $G_{(r)}$, these subgroups inherit certain cohomological properties from the ambient algebraic group $G$.
The complexity theory of $G(\mathbb F_{p^r})$ is well-known from work of Quillen et al, see e.g. \cite[Ch. 5]{bensoncoh2} for further details.
The complexity of the trivial module for any finite group $\Gamma$ is equal to the $p$-rank, i.e. the maximal rank of an elementary abelian $p$-subgroup of $\Gamma$.
The $p$-ranks of the finite Chevalley groups are known, see e.g. \cite[S 15.4]{Hum}.
In particular, if $G=\SL_n$ then $$c_{G(\mathbb F_{p^r})}(k) = r\lfloor \frac{n^2}{4}\rfloor$$
which (by our main Theorem) equals $\tfrac{r}{r+1}c_{G_{(r)}}(k)$ for $r\geq 7$ and $n\geq 4$.
In the final section of the paper, we prove the inequality $$c_{G(\mathbb F_{p^r})}(M)\leq \frac{r}{r+1}c_{G_{(r)}}(M)$$ for an arbitrary finite-dimensional rational module $M$ over a simple algebraic group $G$ in good positive characteristic (see Theorem \ref{complexity bound}).
This significantly generalizes a result of Lin and Nakano \cite[Thm. 3.4(b)]{LN} (which considers the case $r=1$) and provides a new proof of the result of Drupieski \cite[Thm. 2.3]{Dru} that $M|_{G({\mathbb F}_{p^r})}$ is projective if $M|_{G_{(r)}}$ is.

\subsection{Organization of the paper}

In \S \ref{gradingsec} we collect some general results on nilpotent orbits and involutions which will be useful in the rest of the paper.
The main results concerning the nilpotent commuting variety are established in \S \ref{bounding}.
We first consider a certain subset $C'(x)$ of $C_r(\N(\g))$ related to the commuting variety of $\z_\g(x)$, where $x$ is a nilpotent element of $\g$ with no Jordan blocks of order greater than 4.
(See \S \ref{strategy} for the definition of $C'(x)$.)
In this special case, we analyse the subset $C'(x)$ in some detail, proving with the aid of various technical results from \S \ref{technical} that $\dim C'(x)$ is no greater than $(r+1)\lfloor \frac{n^2}{4}\rfloor$.
(To preserve the sanity of the reader, details of some computations are deferred to the Appendix.)
Once this case is settled, we can then prove the required inequality for an arbitrary $x$ by an induction argument.
Taken on their own, the technical results in \S \ref{technical} are unlikely to be of much interest, so a reader who does not want to get bogged down in the details might prefer to go straight to \S \ref{bounding}, only occasionally referring to the earlier sections for the auxiliary results.
Finally, \S \ref{complexity} deals with the connections with cohomological support and complexity of modules over finite group schemes.

\subsection{Acknowledgments}
The first and second authors were supported by an EPSRC grant, reference EP/K022997/1, which also paid for the third author to visit Lancaster to discuss this research project at its inception. The third author is partially supported by Slovenian research agency grants P1-0222 and N1-0103.
The authors would like to thank Chris Bendel, Dan Nakano, Paul Sobaje, Julia Pevtsova and Eric Friedlander for useful discussions. 

\subsection{Notation}
\begin{enumerate}
\item $k$ is an algebraically closed field of characteristic not equal to 2.
\item ${\rm char}\, k$ denotes the characteristic of $k$.
\item $\overline{V}$ denotes the Zariski closure of the set $V$ (the ambient space being understood).
\item $G$ is a linear algebraic group over $k$ (reductive unless otherwise stated), and $\g=\Lie(G)$.
\item $M_{s\times t}$ is the space of all $s\times t$ matrices.
\item $I_n$ is the identity $n\times n$ matrix.
\item $(-)^T$ denotes the transpose operation.
\item The adjoint action of $g\in G$ on $x\in\g$ is denoted $g\cdot x=\Ad g(x) = gxg^{-1}$. 
\item $\mathcal{O}_x=G\cdot x$ the orbit of an element $x$ in $\mathfrak{g}$.
\item $x\mapsto x^{[p]}$ is the $p$-operation on the restricted Lie algebra $\g$, almost always the $p$-th power of matrices.
\item $\N(\g)=\{x\in\g: x~\text{is~nilpotent}\}$ and when ${\rm char}\, k=p>0$, $\N_{[p]}(\g)=\{ x\in\g: x^{[p]}=0\}$, the restricted nullcone of $\g$. One always has $\N_{[p]}(\g)\subseteq\N(\g)$, equality occuring when $p\ge h$, the Coxeter number of $G$.
\item $\N(\g)/{G}$ denotes the (finite) set of $G$-orbits in $\N(\g)$.
\item $N_{G}(S)=\{g\in G:g\cdot S\subseteq S\}$, the normalizer in $G$ of the set $S$.
\item $Z_G(x)$ is the centralizer of $x$ in $G$ (where $x$ is in $G$ or $\g$).
Similarly, $\z_\g(x)$ is the centralizer of $x$ in $\g$.

\end{enumerate}

\section{Some background}\label{gradingsec}

In this section we collect some standard results on orbital varieties and involutions.

\subsection{Gradings of nilpotent centralizers and orbital varieties}

Let $G$ be a simple algebraic group or $\GL_n$, let ${\mathfrak g}=\Lie(G)$ and let $e$ be a nilpotent element of $\g$.
In characteristic zero, we can embed $e$ in an $\mathfrak{sl}_2$-triple $\{ h,e,f\}\subseteq\g$.
Then ${\mathfrak z}_\g(e)\subseteq\sum_{i\geq 0}\g(h;i)$ where $\g(h;i)=\{ x\in\g : [h,x]=ix\}$.
In positive characteristic, one has to be careful about the use of $\mathfrak{sl}_2$-triples because, for example, the grading of $\g$ according to $(\ad h)$-eigenspaces becomes an ${\mathbb F}_p$-grading rather than a ${\mathbb Z}$-grading.
Under mild conditions one can replace the machinery of $\mathfrak{sl}_2$-triples by {\it associated cocharacters}.
An associated cocharacter for $e$ is a cocharacter $\lambda:k^\times\rightarrow G$ such that:

 - $\lambda(t)\cdot e = t^2 e$ for all $t\in k^\times$;

 - ${\mathfrak z}_\g(e)\subseteq\sum_{i\geq 0}\g(\lambda;i)$, where $\g(\lambda;i)=\{ x\in \g : \lambda(t)\cdot x=t^i x \;\mbox{for all}\; t\in k^\times\}$;

 - there is a Levi subgroup $L$ of $G$ such that $e\in\Lie(L)$ is distinguished and $\lambda(k^\times)\subseteq L^{(1)}=(L,L)$.

If the characteristic of the base field is a {\it good prime} for $G$ \cite[\S {4}]{sands} then every nilpotent element $e$ of $\g$ has an associated cocharacter, and any two such are conjugate by an element of $Z_G(e)$ \cite{Premnil}.
(For the purposes of the present paper we only need to know that all primes are good in type $A$; only $2$ is a bad prime for the other classical types.)
For a nilpotent Jordan block of order $m$ in $G=\GL_m$, the standard choice of associated cocharacter is $\lambda:k^\times\rightarrow G$, $\lambda(t)={\rm diag}(t^{m-1},t^{m-3},\ldots , t^{-(m-1)})$.
For $G=\GL_n$ and an arbitrary nilpotent matrix in Jordan normal form, we can construct an associated cocharacter by defining $\lambda$ in each block separately.
We now have ${\mathfrak z}_\g(e)=\sum_{i\geq 0}{\mathfrak z}(e;i)$ where ${\mathfrak z}(e;i)=\g(\lambda;i)\cap{\mathfrak z}_\g(e)$. The same holds in any reductive algebraic group $G$ such that $(G,G)$ is simply connected and $p$ is good prime for $G$ \cite[Proposition 5.8]{Jan:2004}.
The subalgebra ${\mathfrak z}(e;0)$ is called the {\it reductive part} of ${\mathfrak z}_\g(e)$.
(It is the Lie algebra of the reductive group $Z_G(e)\cap Z_G(\lambda(k^\times))$.)

To any nilpotent orbit ${\mathcal O}$ in a classical Lie algebra, one associates a partition $[m^{a_m},\ldots, 1^{a_1}]$ given by the sizes of the Jordan blocks of an element of ${\mathcal O}$.
Let $e\in{\mathcal O}$.
It is now well-known (see \cite{Premnil} for a unified proof) that when $G$ is a reductive group satisfying the {\it standard hypotheses} \cite[2.9]{Jan:2004}, the set of nilpotent orbits as well as the data on dimensions and reductive parts are the same as for the corresponding complex simple Lie algebra, and $\Lie(Z_G(e))={\mathfrak z}_\g(e)$ for any nilpotent element $e$.
(The standard hypotheses are always satisfied for $G=\GL_n$.)
In particular, if $\g={\mathfrak{gl}}_n$ then ${\mathfrak z}(e;0)$ is isomorphic to $\sum_{i=1}^m\mathfrak{gl}_{a_i}$ and $\dim{\mathfrak z}_\g(e)=\sum_{i=1}^m (a_i+\ldots +a_m)^2$.
(In general, the dimension of the centralizer is equal to $\dim{\mathfrak g}(\lambda;0)+\dim{\mathfrak g}(\lambda;1)$.)
For later reference we will be interested in the subset ${\mathfrak z}_\g'(e)\subseteq{\mathfrak z}_\g(e)$ of elements $y$ such that $ke+ky\subseteq\overline{\mathcal O}$.

\begin{lemma}\label{redcentlem}
Let ${\mathcal O}$ be a non-zero nilpotent orbit in the classical Lie algebra $\g$ and let $e\in{\mathcal O}$ have associated partition $[m^{a_m},\ldots, 1^{a_1}]$.
Suppose the characteristic of $k$ is either zero or strictly greater than $m$.
Then ${\mathfrak z}_\g'(e)\subseteq\sum_{i\geq 1}{\mathfrak z}(e;i)$.
\end{lemma}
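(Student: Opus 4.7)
Write $y=y_0+y_+$ with $y_0\in\mathfrak{z}(e;0)$ and $y_+\in\bigoplus_{i\geq 1}\mathfrak{z}(e;i)$; the goal is to prove $y_0=0$. The plan has three steps: first, reduce the problem to showing $e+\beta y_0\in\overline{\mathcal O}$ for every $\beta\in k$; then interpret $y_0$ as a tangent vector to $\overline{\mathcal O}$ at $e$; and finally use an invariant bilinear form to force a non-zero element of $\mathfrak{z}(e;0)$ out of $[e,\g]$.

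\emph{Step 1 — Reduction via the cocharacter.} Starting from $ke+ky\subseteq\overline{\mathcal O}$, I apply $\lambda(t)$ and use $G$-invariance to obtain $\lambda(t)\cdot(e+\alpha y)=t^2 e+\alpha y_0+\alpha\sum_{i\geq 1}t^i y_i\in\overline{\mathcal O}$. Since $\overline{\mathcal O}$ is stable under $\mathbb{G}_m$-scaling (as $\mathcal O$ is: $se=\lambda(s^{1/2})\cdot e\in\mathcal O$), dividing by $t^2$ and substituting $\alpha=t^2\beta$ gives $e+\beta y_0+\beta\sum_{i\geq 1}t^i y_i\in\overline{\mathcal O}$, and letting $t\to 0$ yields $e+\beta y_0\in\overline{\mathcal O}$ for all $\beta\in k$.

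\emph{Step 2 — Tangent vector.} The morphism $\mathbb{A}^1\to\overline{\mathcal O}$, $\beta\mapsto e+\beta y_0$, sends $0$ to $e$. Because $\mathcal O$ is smooth and open in $\overline{\mathcal O}$, $e$ is a smooth point of $\overline{\mathcal O}$, and the standard identification $T_e\overline{\mathcal O}=T_e\mathcal O=[e,\g]$ (valid under the standard hypotheses) shows $y_0\in\mathfrak{z}(e;0)\cap[e,\g]$.

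\emph{Step 3 — Invariant form.} The trace form $\kappa(X,Y)=\mathrm{tr}(XY)$ on $\g=\mathfrak{gl}_n$ (and the analogous $G$-invariant form for the other classical types) satisfies $\kappa(\mathfrak{z}_\g(e),[e,\g])=0$ by invariance, so $y_0$ lies in the radical of $\kappa|_{\mathfrak{z}(e;0)}$. To finish I must check this restriction is non-degenerate: for $\g=\mathfrak{gl}_n$, identifying $\mathfrak{z}(e;0)\cong\bigoplus_i\mathfrak{gl}_{a_i}$ via the embedding $y\mapsto\sum_i y^{(i)}\otimes 1_i$ inside $\mathrm{End}\bigl(\bigoplus_i U_i\otimes k[t]/t^i\bigr)$, a direct computation gives
\[
\kappa(y,z)=\sum_i i\cdot\mathrm{tr}_{\mathfrak{gl}_{a_i}}(y^{(i)}z^{(i)}).
\]
Each coefficient $i\in\{1,\ldots,m\}$ is invertible in $k$ precisely under the hypothesis $\mathrm{char}\,k=0$ or $\mathrm{char}\,k>m$, so $\kappa|_{\mathfrak{z}(e;0)}$ is non-degenerate and $y_0=0$. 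The analogous computation with the appropriate invariant form settles the other classical types.

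The main obstacle is Step 3: identifying $\kappa|_{\mathfrak{z}(e;0)}$ explicitly and observing that its ``multiplicity coefficients'' $i$ (indexed by Jordan-block sizes of $e$) are exactly the quantities the hypothesis $\mathrm{char}\,k>m$ is designed to keep invertible. Steps 1 and 2 are essentially formal, but without the characteristic bound one cannot rule out that $y_0$ sits in the radical of the restricted form.
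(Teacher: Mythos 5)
Your proof is correct, but it takes a genuinely different route from the paper's. The paper first observes that $\mathfrak{z}_\g'(e)$ is $\Ad\lambda(k^\times)$-stable and Zariski closed (so $y_0\in\mathfrak{z}_\g'(e)$), then reduces to $\mathfrak{gl}_n$ via the rank-condition description of orbit closure, writes $y_0=\bigoplus_i\Delta_i(A_i)$ in explicit block form, and computes $(\Delta_i(A_i)+\widetilde{J}_i)^i$ directly, finding $iA_i$ in the top-right corner; the rank drop for $(e+y_0)^i$ then forces $iA_i=0$, and invertibility of $i$ gives $A_i=0$. You instead go through the tangent space: from $e+\beta y_0\in\overline{\mathcal O}$ you deduce $y_0\in T_e\overline{\mathcal O}=[e,\g]$ (using smoothness of $\mathcal O$ in $\overline{\mathcal O}$, which relies on $\Lie(Z_G(e))=\mathfrak{z}_\g(e)$), then use the invariance identity $\kappa([e,\g],\mathfrak{z}_\g(e))=0$ to put $y_0$ in the radical of $\kappa|_{\mathfrak{z}(e;0)}$, and show this restriction is non-degenerate by computing the Gram matrix. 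Strikingly, the same coefficient $i$ appears in both proofs — in the top-right block of the $i$-th power in the paper, and as the multiplicity of $\mathrm{tr}_{\mathfrak{gl}_{a_i}}$ in your restricted form — so the characteristic bound $\mathrm{char}\,k>m$ enters for exactly the same arithmetic reason. Your version is more conceptual and cleanly separates the formal reductions (Steps 1–2) from the arithmetic (Step 3), at the cost of invoking the standard hypotheses twice (smoothness of the centralizer and non-degeneracy of the invariant form); the paper's version is more elementary and self-contained.

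One small loose end: your final sentence waves at ``the analogous computation'' for the other classical types, but you do not need a separate form computation at all. Since Step 2 gives $y_0\in[e,\g]\subseteq[e,\mathfrak{gl}_n]$ and $\mathfrak{z}(e;0)_\g\subseteq\mathfrak{z}(e;0)_{\mathfrak{gl}_n}$ (with $\lambda$ remaining an associated cocharacter after the standard embedding $\g\hookrightarrow\mathfrak{gl}_n$), the $\mathfrak{gl}_n$ non-degeneracy computation already forces $y_0=0$ in every classical type. This matches the spirit of the paper's reduction to $\mathfrak{gl}_n$, though your mechanism for the reduction is orthogonality rather than rank conditions.
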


\begin{proof}
In type $D$ we can replace the special orthogonal group $\SO_{2n}$ by the full orthogonal group ${\rm O}_{2n}$.
This ensures that the condition $ke+ky\subseteq\overline{\mathcal O}$ is equivalent to the set of rank conditions: ${\rm rank} (\xi e+\eta y)^r\leq{\rm rank}\, e^r$ for all $r\geq 1$ and all $\xi ,\eta \in k$.
Hence we can assume that $\g=\mathfrak{gl}_n$ (since any classical Lie algebra has a standard embedding in some $\mathfrak{gl}_n$, and an associated cocharacter $\lambda$ for $e$ in $\g$ is also an associated cocharacter for $e$ in $\mathfrak{gl}_n$).
Let $y=y_0+y_1+\ldots\in{\mathfrak z}_\g'(e)$ where $y_i\in{\mathfrak z}(e;i)$.
Then any scalar multiple of $y$ is in $\z_\g'(e)$, and hence (applying $\Ad\lambda(t)$ to $(e,t^2y)$ and scaling by $t^{-2}$) we have $ke+k(y_0+ty_1+\ldots)\subseteq\overline{\mathcal O}$ for any $t\in k^\times$.
In other words, ${\mathfrak z}_\g'(e)$ is $\Ad\lambda(k^\times)$-stable.
The various rank conditions can be expressed via various determinants of submatrices of $(\xi e+\eta y)^r$, and therefore ${\mathfrak z}_\g'(e)$ is also a Zariski closed subset of ${\mathfrak z}_\g(e)$.
Hence $y_0\in{\mathfrak z}_\g'(e)$.
We wish to show that $y_0=0$.

We recall that ${\mathfrak z}(e;0)\cong\mathfrak{gl}_{a_m}\oplus\ldots \oplus\mathfrak{gl}_{a_1}$.
After conjugating if necessary, we may assume that $$e=\begin{bmatrix} \widetilde{J_m} & & & \\&\widetilde{J_{m-1}} & & \\&&\ddots & \\&&&\widetilde{J_1}
\end{bmatrix}\quad \mathrm{where}\quad \widetilde{J_i}=\left[
\begin{array}{cccc}0&I_{a_i}\\&0&\ddots\\&&\ddots&I_{a_i}\\&&&0
\end{array}
\right]\in \mathfrak{gl}_{ia_i}\, \mathrm{for}\, 1\le i\le m.$$
Then the reductive part of the centralizer is the set of all matrices of the form: $$\begin{bmatrix} \Delta_{m}(A_m) &  &  & \\  & \Delta_{m-1}(A_{m-1}) &  &  \\  &  & \ddots &  \\  &  &  & \Delta_{1}(A_1) \end{bmatrix}\quad \mathrm{where}\; A_i\in\mathfrak{gl}_{a_i}\; \mbox{and}\quad \Delta_j(B)=\begin{bmatrix} B & & &  \\  & B &  &  \\  &  & \ddots & \\ & & & B \end{bmatrix}.$$
It is now clear that $y_0$ of the above form belongs to ${\mathfrak z}_\g'(e)$ if and only if each $\Delta_i(A_i)$ belongs to ${\mathfrak z}_{\mathfrak{gl}_{ia_i}}'(\widetilde{J}_i)$.
To see that we must have $A_i=0$, we simply compute $(\Delta_i(A_i)+\widetilde{J}_i)^i$, noting that the submatrix in the top right-hand corner is $iA_i$.
By our assumption on the characteristic, the requirement $iA_i=0$ implies $A_i=0$.
\end{proof}

\begin{remark}
a) Continuing the argument, we can be more precise: if $y=y_0+y_1+\ldots$ is in ${\mathfrak z}_\g'(e)$, then $y_0=0$ and $y_1\in{\mathfrak z}_\g'(e)$.
(These are necessary but not in general sufficient conditions.)

b) The condition $p>m$ is necessary, as the following example \cite[Rk. 3.1(3)]{Pr:2003} shows.
Let $e\in\mathfrak{gl}_{2p}$ be nilpotent with associated partition $[p,p]$.
Then $G\cdot e={\mathcal O}$ is the (unique) maximal orbit in ${\mathcal N}_{[p]}(\mathfrak{gl}_{2p})$, that is, $\overline{\mathcal O}$ is the set of all $2p\times 2p$ matrices with $p$-th power zero.
In this case the reductive part of the centralizer of $e$ is isomorphic to $\mathfrak{gl}_2$.
Letting $e_0\in{\mathfrak z}(e;0)$ be a non-zero nilpotent, we see that $e_0^{[p]}=0$.
By standard facts about the $p$-operation on a restricted Lie algebra, we have $(ae_0+be)^{[p]}=a^pe_0^{[p]}+b^pe^{[p]}=0$, whence $ae_0+be\in\overline{\mathcal O}$ for any $a,b\in k$.
Thus $e_0\in\mathfrak{z}_\g'(e)$.
\end{remark}

Let $B$ be a Borel subgroup of $G$ with unipotent radical $U$, and let ${\mathfrak b}$, resp. ${\mathfrak u}$ be the Lie algebra of $B$, resp. $U$.
Given a nilpotent orbit ${\mathcal O}$, the intersection ${\mathcal O}\cap{\mathfrak u}$ is an {\it orbital variety}.
The following fact will be very useful:

\vspace{0.1cm}
 - {\it ${\mathcal O}\cap{\mathfrak u}$ is equidimensional of dimension $\frac{1}{2}\dim{\mathcal O}$}.

\vspace{0.1cm}
(See \cite[Thm. 1, 10.6 and Thm., 10.11]{Jan:2004}. The proof in \cite{Jan:2004} applies in characteristic zero or good positive characteristic, which suffices for our purposes.)
We now apply this to prove a useful fact about the dimensions of certain subsets of centralizers of nilpotent elements.

\begin{lemma}\label{orbitallem}
Let ${\mathfrak g}$ be a classical Lie algebra, let ${\mathcal O}$ be a nilpotent orbit with associated partition $[m^{a_m},\ldots, 1^{a_1}]$ and let $e\in{\mathcal O}$.
Suppose the characteristic of $k$ is either zero or greater than $m$.
Then $\dim{\mathfrak z}_\g'(e)\leq \frac{1}{2}\dim{\mathcal O}$.
\end{lemma}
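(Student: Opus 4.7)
The plan is to bound $\dim\mathfrak{z}_\g'(e)$ by sandwiching it between an affine translate of $e$ and an orbital variety for $\mathcal{O}$, and then applying the equidimensionality statement $\dim(\mathcal{O}\cap\mathfrak{u})=\tfrac12\dim\mathcal{O}$ recalled just above. First I would use Lemma \ref{redcentlem} to place $\mathfrak{z}_\g'(e)$ inside $\sum_{i\geq 1}\mathfrak{z}(e;i)$, which in turn lies in the nilradical $\mathfrak{u}_P=\sum_{i\geq 1}\g(\lambda;i)$ of the parabolic $P$ determined by an associated cocharacter $\lambda$ for $e$. Since $e\in\g(\lambda;2)\subseteq\mathfrak{u}_P$ as well, the whole affine set $e+\mathfrak{z}_\g'(e)$ sits inside $\mathfrak{u}_P$.

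Next, I would choose a Borel subgroup $B\subseteq P$ (concretely, $B=B_L\cdot U_P$ where $B_L$ is any Borel of the Levi $L=Z_G(\lambda(k^\times))$), whose unipotent radical has Lie algebra $\mathfrak{u}\supseteq\mathfrak{u}_P$. Then $e+\mathfrak{z}_\g'(e)\subseteq\mathfrak{u}$. By definition of $\mathfrak{z}_\g'(e)$, for every $y\in\mathfrak{z}_\g'(e)$ the element $e+y$ lies in $ke+ky\subseteq\overline{\mathcal{O}}$, so in fact $e+\mathfrak{z}_\g'(e)\subseteq\overline{\mathcal{O}}\cap\mathfrak{u}$. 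Translation by $e$ being an isomorphism, we get $\dim\mathfrak{z}_\g'(e)=\dim(e+\mathfrak{z}_\g'(e))\leq\dim(\overline{\mathcal{O}}\cap\mathfrak{u})$.

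The final step is to decompose $\overline{\mathcal{O}}\cap\mathfrak{u}$ as the finite union $\bigcup_{\mathcal{O}'\subseteq\overline{\mathcal{O}}}(\mathcal{O}'\cap\mathfrak{u})$ over the nilpotent orbits $\mathcal{O}'$ in $\overline{\mathcal{O}}$, and to invoke the equidimensionality of orbital varieties: each piece $\mathcal{O}'\cap\mathfrak{u}$ is equidimensional of dimension $\tfrac12\dim\mathcal{O}'\leq\tfrac12\dim\mathcal{O}$. Hence $\dim(\overline{\mathcal{O}}\cap\mathfrak{u})\leq\tfrac12\dim\mathcal{O}$, which combined with the preceding inequality gives the desired bound.

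I do not anticipate a serious obstacle; the substantive input is really Lemma \ref{redcentlem}, which strips off the reductive part of the centralizer and thereby forces $\mathfrak{z}_\g'(e)$ into a subspace that can be made $B$-stable by a suitable choice of Borel inside $P$. The only small checks are the inclusion $\mathfrak{u}_P\subseteq\mathfrak{u}$ for the chosen $B$, and that the characteristic hypothesis $\chr k=0$ or $\chr k>m$ is compatible with the good-characteristic assumption needed for the orbital variety dimension formula (it is, since in classical types the only bad prime is $2\leq m$).
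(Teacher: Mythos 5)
Your proof is correct and follows essentially the same route as the paper's: both use Lemma \ref{redcentlem} to place $\mathfrak{z}_\g'(e)$ in the nilradical of a parabolic (hence of a Borel), and then invoke the equidimensionality of orbital varieties on the finite union $\overline{\mathcal{O}}\cap\mathfrak{u}=\bigcup_{\mathcal{O}'\subseteq\overline{\mathcal{O}}}\mathcal{O}'\cap\mathfrak{u}$. The one small detour is that you translate by $e$ and work with $e+\mathfrak{z}_\g'(e)$, whereas the paper simply notes that $\mathfrak{z}_\g'(e)\subseteq\overline{\mathcal{O}}$ directly (taking the combination $0\cdot e+1\cdot y$ in the defining condition $ke+ky\subseteq\overline{\mathcal{O}}$), so the translation step is unnecessary though harmless.
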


\begin{proof}
By Lemma \ref{redcentlem}, ${\mathfrak z}_\g'(e)\subseteq\sum_{i\geq 1}\g(\lambda;i)$, which is the nilradical of a parabolic subalgebra of $\g$ and is therefore contained in the nilradical ${\mathfrak u}$ of a Borel.
Thus $${\mathfrak z}_\g'(e)\subseteq \overline{\mathcal O}\cap{\mathfrak u}=\bigcup_{{\mathcal O}'\subseteq\overline{\mathcal O}}{\mathcal O}'\cap{\mathfrak u}$$
which is a finite union of locally closed subsets of dimension at most $\frac{1}{2}\dim{\mathcal O}$, so we are done.
\end{proof}

For clarity we recall our standing assumption that $k$ is not of characteristic 2.

\begin{lemma}\label{yzlem}
Let $G=\GL_n$ and let $e\in{\mathcal N}$ be a square zero matrix of rank $s$.
Let $t=n-2s$.
Then $\dim{\mathfrak z}_\g'(e)=s(s+t)$.
\end{lemma}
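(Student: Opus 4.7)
The plan is to prove $\dim \z_\g'(e) = s(s+t)$ by establishing matching upper and lower bounds: the upper bound will come directly from Lemma \ref{orbitallem}, and the lower bound from an explicit construction of an $s(s+t)$-dimensional subspace of $\z_\g'(e)$.

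For the upper bound, the Jordan type of $e$ is the partition $[2^s, 1^t]$ (since $e^2 = 0$ and $\rank e = s$ force $s$ Jordan blocks of size $2$ and $t = n-2s$ of size $1$). In particular the parameter $m$ of Lemma \ref{orbitallem} equals $2$, so the standing hypothesis $\chr k \neq 2$ suffices to apply the lemma. I would compute $\dim \calO_e$ using the standard formula $\dim \z_\g(e) = (s+t)^2 + s^2$ (the sum of squares of the parts of the conjugate partition $[s+t,\, s]$), obtaining $\dim \calO_e = n^2 - (s+t)^2 - s^2 = 2s(s+t)$. Lemma \ref{orbitallem} then yields $\dim \z_\g'(e) \leq \tfrac{1}{2}\dim \calO_e = s(s+t)$.

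For the lower bound, I would fix a basis of $k^n$ in which
\[ e = \begin{pmatrix} 0 & I_s & 0 \\ 0 & 0 & 0 \\ 0 & 0 & 0 \end{pmatrix}\]
with block sizes $s, s, t$, and consider the linear subspace
\[ V = \left\{ \begin{pmatrix} 0 & B & C \\ 0 & 0 & 0 \\ 0 & 0 & 0 \end{pmatrix} : B \in M_{s \times s},\ C \in M_{s\times t}\right\}\subseteq \g.\]
A straightforward block computation gives $ey = ye = 0$ and $y^2 = 0$ for every $y \in V$, so $V \subseteq \z_\g(e)$. More strongly, every $\xi e + \eta y$ has its non-zero entries confined to the first $s$ rows, hence has rank at most $s$, and squares to zero by the same block calculation. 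Since $\overline{\calO}_e$ is precisely the set of square-zero matrices in $\g$ of rank at most $s$ (orbits of square-zero matrices being parameterized by rank), we conclude $ke + ky \subseteq \overline{\calO}_e$ for every $y \in V$, i.e.\ $V \subseteq \z_\g'(e)$. As $\dim V = s^2 + st = s(s+t)$, combining with the upper bound gives the claimed equality.

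I do not anticipate a real obstacle here. The upper bound is immediate once the Jordan type is identified and Lemma \ref{orbitallem} is invoked; the lower bound is a matter of writing down the right explicit family, with the only sanity check being the standard rank-parameterization of $\overline{\calO}_e$.
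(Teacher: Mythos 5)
Your proof is correct and follows essentially the same strategy as the paper: the upper bound $\dim \z_\g'(e)\le \tfrac12\dim \mathcal O_e = s(s+t)$ from Lemma \ref{orbitallem}, and a matching lower bound by exhibiting an explicit $s(s+t)$-dimensional family of elements in $\z_\g'(e)$ concentrated in the first $s$ rows. The only cosmetic difference is the choice of block ordering ($s,s,t$ versus the paper's $s,t,s$), which does not affect the argument.
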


\begin{proof}
By standard results on dimensions of nilpotent orbits, the dimension of the orbit of $e$ is $(2s+t)^2-(s+t)^2-s^2=2s^2+2st$.
Hence, by Lemma \ref{orbitallem} we have $\dim{\mathfrak z}_\g'(e)\leq s(s+t)$.
To show that we have equality, it only remains to exhibit a closed subset of ${\mathfrak z}_\g'(e)$ of dimension $s(s+t)$.
We can assume after conjugation that 
\begin{equation}\label{centeq}
e=\begin{bmatrix} 0 & 0 & I_s \\ 0 & 0 & 0 \\ 0 & 0 & 0 \end{bmatrix};\;\;\;\;\mbox{then}\;\;\; {\mathfrak z}_\g'(e)\supseteq \left\{\begin{bmatrix} 0 & y & w \\ 0 & 0 & 0 \\ 0 & 0 & 0 \end{bmatrix} : y\in M_{s\times t}, w\in\mathfrak{gl}_s\right\}
\end{equation}
which has dimension $s(s+t)$.
\end{proof}

\begin{remark}\label{CFPremark}
We note that the subset on the right-hand side of \eqref{centeq} is the nilradical of a maximal parabolic subalgebra of $\g$, denoted ${\mathfrak u}_{s,s+t}$ and identified in \cite{CFP} as a maximal commutative nil subalgebra of $\g$.
It was proved in \cite{CFP} that if $n=2m$ (resp. $n=2m+1$) then up to conjugacy ${\mathfrak u}_{m,m}$ is the unique, resp. ${\mathfrak u}_{m,m+1},{\mathfrak u}_{m+1,m}$ are the only, commutative nil subalgebra(s) of $\g$ of maximal dimension.
\end{remark}

\subsection{Involutions of reductive groups}

In this subsection we will state some standard results about involutions of (Lie algebras of) reductive groups.
These were established in characteristic zero in \cite{Kostant-Rallis:1971}, and in odd positive characteristic (under mild hypotheses) by the first author in \cite{L:2007}.
Let $G$ be reductive and let $\theta:G\rightarrow G$ be an automorphism of order 2.
Then $d\theta$ is an involution of ${\mathfrak g}$.
Let ${\mathfrak k}$, resp. ${\mathfrak p}$ denote the $(+1)$, resp. $(-1)$ eigenspace for $d\theta$ on $\g$.
Then $\g={\mathfrak k}\oplus{\mathfrak p}$ and ${\mathfrak p}$ is stable under the adjoint action of the identity component of the fixed point subgroup $K=(G^\theta)^\circ$; moreover, $\Lie(K)={\mathfrak k}$.

In the next section we will require some facts about the orbits of $K$ on ${\mathfrak p}$, summarized in the following proposition and established in \cite[Prop. 16, Thm. 1, Prop. 5 and Thm. 9]{Kostant-Rallis:1971} and \cite[Thm. 2.11, Cor. 2.10, Lemma 4.1, Thm. 4.9 and Thm. 5.1]{L:2007}.
We recall that a subspace of ${\mathfrak p}$ which is maximal among the commutative subspaces consisting of semisimple elements is called a {\it Cartan subspace}.

\begin{prop}\label{involutionsprop}
Let $G$, $\theta$, $K$, ${\mathfrak k}$, ${\mathfrak p}$ be as above, and assume that the characteristic of the ground field is either zero or odd and good for $G$.

a) The semisimple elements are dense in ${\mathfrak p}$, and the $K$-orbit of $x\in{\mathfrak p}$ is closed if and only if $x$ is semisimple.
Any semisimple element of ${\mathfrak p}$ is contained in a Cartan subspace, and any two Cartan subspaces of ${\mathfrak p}$ are $K$-conjugate.

b) In positive characteristic, assume further that G is separably isogenous to a group satisfying the standard hypotheses.
For any $x\in{\mathfrak p}$, the intersection $(G\cdot x)\cap{\mathfrak p}$ consists of finitely many $K$-orbits, each of dimension $\frac{1}{2}\dim G\cdot x$.
\end{prop}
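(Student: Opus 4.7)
The plan is to follow the approach of Kostant--Rallis in characteristic zero and to adapt it (as in \cite{L:2007}) for the positive characteristic case. The main ingredients are the Jordan decomposition inside $\mathfrak{p}$ and a Chevalley restriction-type structure theory for the symmetric pair $(\mathfrak{g}, d\theta)$.

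For part (a), I would first verify that the Jordan decomposition $x = x_s + x_n$ of any $x \in \mathfrak{p}$ has both parts in $\mathfrak{p}$, by exploiting $d\theta$-invariance of the semisimple and nilpotent parts. Next, starting from a maximal commutative semisimple subspace $\mathfrak{c} \subseteq \mathfrak{p}$ (a Cartan subspace), one proves density of semisimple elements by showing that the morphism $K \times \mathfrak{c} \to \mathfrak{p}$, $(k, c) \mapsto \Ad(k)\, c$, is dominant; this reduces to the tangent space computation $[\mathfrak{k}, c] + \mathfrak{c} = \mathfrak{p}$ at a regular element $c$, which in turn follows from the restricted root space decomposition of $\mathfrak{g}$ under $\ad \mathfrak{c}$. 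The closed-orbit criterion is the symmetric-pair analog of the Hilbert--Mumford criterion: for the ``if'' direction, one uses that a semisimple element has a reductive centralizer and applies the closed-orbit lemma; for ``only if'', one uses an associated cocharacter for $x_n$ (from the previous subsection) that may be chosen in $K$, in order to contract $x$ to $x_s$ inside $\overline{K\cdot x}$. Conjugacy of Cartan subspaces is then proved by picking regular semisimple representatives $c_1, c_2$ and observing that their centralizers in $\mathfrak{g}$ are $d\theta$-stable Levi subalgebras, which may be $K$-conjugated.

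For part (b), the dimension formula follows from the observation that $\ad x$ interchanges $\mathfrak{k}$ and $\mathfrak{p}$, since $x \in \mathfrak{p}$ and the bracket relations of a symmetric pair give $[\mathfrak{k}, \mathfrak{p}] \subseteq \mathfrak{p}$ and $[\mathfrak{p}, \mathfrak{p}] \subseteq \mathfrak{k}$. Fixing a non-degenerate $d\theta$-invariant bilinear form on $\mathfrak{g}$ (available in good characteristic under the separable isogeny hypothesis), the maps $\ad x \colon \mathfrak{k} \to \mathfrak{p}$ and $\ad x \colon \mathfrak{p} \to \mathfrak{k}$ are adjoint up to sign and therefore have equal rank. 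Since $[\mathfrak{g}, x] = [\mathfrak{k}, x] \oplus [\mathfrak{p}, x]$ (the summands lying in the two distinct $d\theta$-eigenspaces) and $T_x(K\cdot x) = [\mathfrak{k}, x]$, this yields $\dim K\cdot x = \tfrac{1}{2}\dim G\cdot x$. Finiteness of $K$-orbits in $(G\cdot x) \cap \mathfrak{p}$ is a Galois-cohomology argument: sending $y = g\cdot x$ to the class of $g^{-1}\theta(g) \in Z_G(x)$ defines an injection of the set of $K$-orbits into $H^1(\langle\theta\rangle, Z_G(x))$, which is finite because $Z_G(x)$ is an algebraic group of finite type.

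The principal technical obstacle, most visible in positive characteristic, is ensuring that the various centralizers, one-parameter subgroups, and invariant forms behave as in the classical picture. This is precisely where the separable isogeny hypothesis and good characteristic assumption are used: they guarantee the existence of associated cocharacters lying in $K$, the smoothness of $Z_G(x)$, the $d\theta$-stability of Jordan components, and a non-degenerate $d\theta$-invariant form on $\mathfrak{g}$ pairing $\mathfrak{k}$ with itself and $\mathfrak{p}$ with itself.
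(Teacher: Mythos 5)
The paper does not actually prove Proposition~\ref{involutionsprop}: it is explicitly a compilation of results quoted from \cite{Kostant-Rallis:1971} (characteristic zero) and \cite{L:2007} (odd good characteristic), with precise references given. Your proposal is therefore not being compared to a proof in this paper but to the arguments in those sources, and it reconstructs them faithfully: the $d\theta$-stable Jordan decomposition, dominance of $K\times\mathfrak{c}\to\mathfrak{p}$ via the tangent-space identity $[\mathfrak{k},c]+\mathfrak{c}=\mathfrak{p}$, contraction by a $\theta$-adapted cocharacter for the nilpotent part, the restricted-root argument for conjugacy of Cartan subspaces, the rank equality of $\ad x\colon\mathfrak{k}\to\mathfrak{p}$ and $\ad x\colon\mathfrak{p}\to\mathfrak{k}$ via a $d\theta$-invariant form, and the twisted-cocycle map to $Z_G(x)$ for finiteness of orbits. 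That is indeed the Kostant--Rallis/Levy approach.

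Two points deserve tightening. First, your parenthetical reason for finiteness of $H^1(\langle\theta\rangle,Z_G(x))$ --- ``because $Z_G(x)$ is an algebraic group of finite type'' --- is not a justification; finite type does not by itself bound a nonabelian $H^1$. What one actually invokes is Richardson's theorem (see \cite{R:1982}) that for a linear algebraic group $H$ over an algebraically closed field with a finite group $F$ acting, $H^1(F,H)$ is finite; this is the nontrivial ingredient. Second, in the ``closed orbit only if semisimple'' step you need the associated cocharacter for $x_n$ to lie in $K\cap Z_G(x_s)$, and the existence of such a $\theta$-adapted, $x_s$-centralizing cocharacter in positive characteristic is exactly one of the delicate points that the good-characteristic and separable-isogeny hypotheses are there to secure (it is Theorem~4.9 and the surrounding material in \cite{L:2007}); you flag this at the end but should be aware it is not automatic. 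With those two clarifications the sketch matches the cited proofs.
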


\section{Technical lemmas}\label{technical}

This section contains estimates of the dimensions of various varieties which are related to the nilpotent commuting variety $C_r(\N(\mathfrak{gl}_n))$.

\begin{lemma}\label{yz=0,zy=0}
Let
\[ \mathcal{U}_{s,t}=\{(y,z)\in M_{s\times t}\times M_{t\times s}:yz=0,zy=0\}.\]
Then $\dim \mathcal{U}_{s,t}=st$.
\end{lemma}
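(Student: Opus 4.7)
The plan is to stratify $\mathcal{U}_{s,t}$ by the rank of the first coordinate and verify that every stratum has dimension exactly $st$.

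First I would consider the projection $\pi\colon\mathcal{U}_{s,t}\to M_{s\times t}$, $(y,z)\mapsto y$. For $0\le r\le\min(s,t)$, let $M^r_{s\times t}$ denote the locally closed subvariety of matrices of rank exactly $r$; by the standard dimension count for determinantal varieties, $\dim M^r_{s\times t}=r(s+t-r)$. The remaining task is to determine the fiber dimension of $\pi$ over a rank-$r$ matrix.

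For this I would exploit the natural $(\GL_s\times\GL_t)$-action $(g,h)\cdot(y,z)=(gyh^{-1},hzg^{-1})$, which preserves both relations $yz=0$ and $zy=0$ and acts transitively on each rank stratum in the $y$-coordinate. So it suffices to analyze the fiber over the single representative $y_0=\begin{pmatrix}I_r&0\\0&0\end{pmatrix}$. Writing $z$ in matching block form and expanding $y_0z=0$ and $zy_0=0$, a short block-matrix computation forces three of the four blocks of $z$ to vanish, leaving only the bottom-right $(t-r)\times(s-r)$ block free. Hence every fiber over the rank-$r$ stratum has dimension $(s-r)(t-r)$.

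The fiber dimension theorem then gives the stratum dimension
\[r(s+t-r)+(s-r)(t-r)=st,\]
independently of $r$, and since $\mathcal{U}_{s,t}$ is the finite union of these strata the conclusion follows. There is no serious obstacle here: the proof hinges on the pleasant cancellation that makes the total dimension independent of $r$. A priori one might guess that either the generic stratum $r=\min(s,t)$ or the degenerate stratum $r=0$ dominates, but in fact they all contribute equally.
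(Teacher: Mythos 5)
Your proof is correct, and it takes a genuinely different route from the paper's. The paper packages the pair $(y,z)$ as the square-zero matrix $\begin{bmatrix}0&y\\z&0\end{bmatrix}$ lying in the $(-1)$-eigenspace $\mathfrak p$ of the involution $\mathrm{Ad}\,\mathrm{diag}(I_s,-I_t)$ of $\GL_{s+t}$, bounds the rank by $\min\{s,t\}$, and then invokes the general half-dimension result for $K$-orbits in $\mathcal O\cap\mathfrak p$ (Proposition~\ref{involutionsprop}), a piece of machinery the paper already has on hand for other purposes. That gives only the upper bound $\dim\mathcal U_{s,t}\le st$, and the lower bound comes from exhibiting the subvariety $\{(y,0)\}$. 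Your argument is more elementary and self-contained: you stratify by $\mathrm{rank}(y)$, use $\GL_s\times\GL_t$-equivariance to reduce the fiber computation to the single normal form $y_0$, and observe that the block conditions $y_0z=0$, $zy_0=0$ kill all but the $(t-r)\times(s-r)$ corner of $z$, so that each stratum has dimension exactly $r(s+t-r)+(s-r)(t-r)=st$. The cancellation you highlight is the real content; one small point worth making explicit is that the $\GL_s\times\GL_t$-equivariance of $\pi$ and the transitivity of the action on $M^r_{s\times t}$ ensure all fibers over a given rank stratum are isomorphic, so the dimension-of-fibres theorem applies cleanly. Your approach has the additional advantage that it essentially hands you the equidimensionality (and, with a little more work, the irreducible components) mentioned in the remark following the lemma, since each stratum $\mathcal U^r$ is irreducible of dimension $st$; the paper defers that statement to separate references on orbit closures in symmetric spaces.
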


\begin{proof}
Conjugation by the diagonal matrix $g=\begin{bmatrix} I_s & 0 \\ 0 & -I_t \end{bmatrix}$ is an involutive automorphism of $\GL_{s+t}$. The fixed point subgroup for this automorphism is $\GL_s\times \GL_t$ and the $(-1)$-eigenspace in $\mathfrak{gl}_{s+t}$ is the set of all matrices of the form $M=\begin{bmatrix} 0 & y \\ z & 0 \end{bmatrix}$ where $y\in M_{s\times t}$ and $z\in M_{t\times s}$. The conditions $yz=0,zy=0$ are equivalent to $M^2=0$, so $\mathcal{U}={\mathcal U}_{s,t}$ is isomorphic to the variety of all square zero matrices in the $(-1)$ eigenspace $\mathfrak{p}$. Let $m=\min\{s,t\}$. The conditions $yz=0,zy=0$ are equivalent also to $\mathrm{im}\, z\subseteq \ker y,\mathrm{im}\, y\subseteq \ker z$, which imply $\mathrm{rank}( M)=\mathrm{rank}(y)+\mathrm{rank}(z)\le m$. The variety $\mathcal{U}$ is therefore a subset of the closure of the nilpotent $\GL_{s+t}$-orbit $\mathcal{O}$ corresponding to the partition $[2^m,1^{s+t-2m}]$.
There are finitely many $\GL_{s+t}$-orbits in $\overline{\mathcal O}$, and by Proposition \ref{involutionsprop} each such orbit $\mathcal{O}'$ intersects $\mathfrak{p}$ in finitely many $\GL_s\times \GL_t$-orbits, each of dimension $\frac{1}{2}\dim \mathcal{O}'$. Therefore $\dim \mathcal{U}\le \frac{1}{2}\dim \mathcal{O}$. A short calculation shows that $\dim \mathcal{O}=2st$, hence $\dim \mathcal{U}\le st$. To show the equality observe that $\{(y,0):y\in M_{s\times t}\}$ is a subvariety of $\mathcal{U}$ of dimension $st$.

\end{proof}

\begin{remark}
Using results on orbit closures in symmetric spaces one can show that the variety $\mathcal{U}_{s,t}$ is equidimensional.
Its irreducible components are
\[ \{(y,z)\in \mathcal{U}_{s,t}:\rank(y)\le m,\rank(z)\le \min \{s,t\}-m\} \]
for $m=0,\ldots ,\min \{s,t\}$.
\end{remark}

\begin{lemma}\label{y_iz_j=0}
Let $r$ be a positive integer, $s,t$ non-negative integers, and let
\[\mathcal{Y}_{r,s,t}=\{(y_1,\ldots ,y_r,z_1,\ldots ,z_r)\in M_{s\times t}^r\times M_{t\times s}^r: y_iz_j=0, 1\le i,j\le r\}.\]
Then $\dim\mathcal{Y}_{r,s,t}\le rst+\lfloor\frac{t^2}{4}\rfloor$.
\end{lemma}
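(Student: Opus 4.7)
The plan is to stratify $\mathcal{Y}_{r,s,t}$ according to the dimension $d$ of the subspace $W=\sum_{j=1}^r\im(z_j)\subseteq k^t$. The condition $y_iz_j=0$ for all $i,j$ is equivalent to the pair of conditions: every $z_j$ has image in $W$ (automatic) and every $y_i$ vanishes on $W$. Hence a tuple in the stratum $\{\dim W=d\}$ is determined by $W$ together with $r$ linear maps $k^s\to W$ (the $z_j$'s, whose images must span $W$) and $r$ linear maps $k^t/W\to k^s$ (the $y_i$'s after descent).

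First I would let $\mathcal{Y}^{[d]}\subseteq\mathcal{Y}_{r,s,t}$ denote the locally closed subset where $\dim W=d$ and consider the morphism $\pi_d:\mathcal{Y}^{[d]}\to\mathrm{Gr}(d,t)$ sending $(y_\bullet,z_\bullet)\mapsto W$. The fiber of $\pi_d$ over a given $W$ has dimension at most $rsd+rs(t-d)=rst$: the $z_j$'s contribute at most $rsd$ parameters (as $r$ maps $k^s\to W$) and the $y_i$'s contribute at most $rs(t-d)$ parameters (as $r$ maps $k^t/W\to k^s$). Since $\dim\mathrm{Gr}(d,t)=d(t-d)$, a standard fiber-dimension argument yields $\dim\mathcal{Y}^{[d]}\le d(t-d)+rst$.

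Since $\mathcal{Y}_{r,s,t}$ is the finite union $\bigcup_{d=0}^{\min(rs,t)}\mathcal{Y}^{[d]}$ and $d(t-d)\le\lfloor t^2/4\rfloor$ for $0\le d\le t$ (with equality at $d=\lfloor t/2\rfloor$), the required inequality $\dim\mathcal{Y}_{r,s,t}\le rst+\lfloor t^2/4\rfloor$ follows by taking the maximum over $d$. The proof has no serious obstacle; the only point requiring care is the fiber-dimension step, which can be made rigorous by covering $\mathrm{Gr}(d,t)$ by standard affine charts and realizing each preimage $\pi_d^{-1}(\text{chart})$ as a locally closed subset of $\mathbb{A}^{d(t-d)}\times M_{s\times d}^r\times M_{s\times(t-d)}^r$ of dimension at most $d(t-d)+rst$.
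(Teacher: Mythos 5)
Your proof is correct and is essentially the same as the paper's. Both arguments parametrize tuples by the subspace $W\subseteq k^t$ spanned by the columns of the $z_j$'s, observe that for fixed $W$ of dimension $d$ the pairs $(y_\bullet,z_\bullet)$ lie in an affine space of dimension $rst$, and then account for the choice of $W$ via the Grassmannian contribution $d(t-d)\le\lfloor t^2/4\rfloor$ (the paper phrases this as the codimension of a maximal parabolic of $\GL_t$, which equals $\dim\mathrm{Gr}(d,t)$).
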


\begin{proof}
We note that $\GL_t$ acts on ${\mathcal Y}={\mathcal Y}_{r,s,t}$ by 
 $$g\cdot (y_1,\ldots ,y_r,z_1,\ldots ,z_r)=(y_1g^{-1},\ldots ,y_rg^{-1},gz_1,\ldots ,gz_r).$$
Let $(y_1,\ldots, z_r)\in{\mathcal Y}$ and let $W$ be the subspace of $k^t$ spanned by the columns of the $z_i$.
Then $z_i\in W\otimes (k^s)^T$ and $y_i\in k^s\otimes W^\perp$, where $W^\perp\subseteq (k^t)^T$ is the subspace of all linear forms which kill $W$.
(Here we identify $M_{s\times t}$ with $k^s\otimes (k^t)^T$, and we identify $(k^t)^T$ with $(k^t)^*$ via left multiplication.)
For a fixed $W$ of dimension $m$, the space of such tuples is therefore of dimension $rsm+rs(t-m)=rst$.
Now any two subspaces of $k^t$ of dimension $m$ are conjugate by the action of $\GL_t$, so fixing a subspace $W_m$ of dimension $m$ for each $m\in\{ 0,\ldots ,t\}$, we obtain:
$${\mathcal Y}=\cup_{m=0}^t \GL_t\cdot \left( (k^s\otimes W_m^\perp)^r\oplus (W_m\otimes (k^s)^T)^r\right).$$
Since the stabilizer of $W_m$ is a maximal parabolic subgroup of $\GL_t$ of dimension $t^2-tm+m^2$, we therefore have $\dim{\mathcal Y}\leq rst+{\rm max}_{0\leq m\leq t}\, m(t-m) = rst+\lfloor \frac{t^2}{4}\rfloor$.
\end{proof}

\begin{lemma}\label{y_iz_j=y_jz_i}
Let $r$ be a positive integer, $s,t$ nonnegative integers and let
\[ \mathcal{W}_{r,s,t}=\{(y_1,\ldots ,y_r,z_1,\ldots ,z_r)\in M_{s\times t}^r\times M_{t\times s}^r:y_iz_j=y_jz_i, ~ 1\le i,j\le r\}.\]
Then $\dim \mathcal{W}_{r,s,t}\le (r+1)st+\lfloor \frac{t^2}{2}\rfloor$.
Moreover,  for all $r\geq 2$ we have
\[\dim \mathcal{W}_{r,s,1}=
\left\{ \begin{array}{cl} rs & \mbox{if $s\geq 2$ and $r\geq 3$,} \\ 2s+1 & \mbox{if $s\geq 2$ and $r=2$,} \\ r+1 & \mbox{if $s=1$,} \\ 0 & \mbox{if $s=0$.}\end{array}\right.\]
In particular, $\dim \mathcal{W}_{r,s,1}\le rs+1$ for each $r\ge 2$ and each $s\ge {0}$.
\end{lemma}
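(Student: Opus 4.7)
The plan is to prove the two assertions separately: the explicit formula for $\dim\mathcal{W}_{r,s,1}$, from which the bound $\dim\mathcal{W}_{r,s,1}\le rs+1$ follows by inspection, and the general upper bound by induction on $t$.

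For $\mathcal{W}_{r,s,1}$, I would regard each $y_i\in k^s$ as a column vector and each $z_j\in(k^s)^*$ as a row vector, so that $y_iz_j\in M_{s\times s}$ has rank at most one with column space in $\mathrm{span}(y_i)$. The key step is the following dichotomy: if the matrix $Y=[y_1\,|\,\cdots\,|\,y_r]\in M_{s\times r}$ satisfies $\rank(Y)\ge 2$, then $z_k=0$ for every $k$, and symmetrically with the roles of $Y$ and $Z$ swapped. Indeed, given any pair of linearly independent columns $y_i,y_j$ of $Y$ and any index $k$, the equation $y_iz_k=y_kz_i$ (together with $y_jz_k=y_kz_j$) forces the column space of each side to lie in $\mathrm{span}(y_i)\cap\mathrm{span}(y_k)=0$ unless $y_k\in\mathrm{span}(y_i)$, in which case we argue using $y_j$ in place of $y_i$. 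This reduces the problem to three mutually exclusive cases: (A) $Z=0$, giving dimension $rs$; (B) $Y=0$, giving dimension $rs$; and (C) $\rank(Y)=\rank(Z)=1$, so that $y_i=a_iy$ and $z_j=b_jz^T$ with $(a_i)\parallel(b_j)$. Parametrizing case (C) by $(y,z,a,\lambda)\in k^s\times k^s\times k^r\times k$ with $b=\lambda a$, modulo the two obvious $k^\times$-rescalings $(y,a)\mapsto(\alpha y,\alpha^{-1}a)$ and $(z,\lambda)\mapsto(\beta z,\beta^{-1}\lambda)$, gives dimension $2s+r-1$. Taking the maximum of these three in the four regimes ($s=0$; $s=1$; $s\ge 2,r=2$; $s\ge 2,r\ge 3$) produces the explicit formula.

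For the general bound I would stratify $\mathcal{W}_{r,s,t}$ by the dimension $m$ of $W:=\sum_j\im(z_j)\subseteq k^t$ and induct on $t$. Fixing $W$ of dimension $m$ and a complement $k^t=W\oplus W'$: the equations $y_iz_j=y_jz_i$ involve only the components $y_i|_W\in\mathrm{Hom}(W,k^s)$, so $y_i|_{W'}$ is free and contributes $rs(t-m)$; the Grassmannian of $W$ contributes $m(t-m)$; and $(y_i|_W,z_j)\in M_{s\times m}^r\times M_{m\times s}^r$ satisfies the same condition and hence lies in $\mathcal{W}_{r,s,m}$. For strata with $m<t$ the inductive hypothesis yields
\[\dim\mathcal{W}^{(m)}\le m(t-m)+rs(t-m)+(r+1)sm+\lfloor m^2/2\rfloor=rst+sm+mt-\lceil m^2/2\rceil,\]
and a short arithmetic verification (using $(t-m)^2\ge 1$ whenever $m<t$, so that $\lfloor t^2/2\rfloor+\lceil m^2/2\rceil\ge mt$) confirms this is bounded by $(r+1)st+\lfloor t^2/2\rfloor$.

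The main obstacle is the open stratum $m=t$, where the induction is circular. For this I would exploit the free $\GL_t$-action $g\cdot(y,z)=(yg^{-1},gz)$ on this stratum, which gives $\dim\mathcal{W}^{(m=t)}=t^2+\dim(\mathcal{W}^{(m=t)}/\GL_t)$; after quotienting, the data is naturally parametrized by a kernel $K=\ker Z\in\mathrm{Gr}(rs-t,rs)$ together with a block-symmetric $rs\times rs$ matrix $M=YZ$ of rank at most $t$ satisfying $\ker M\supseteq K$. The required estimate then follows by further stratifying this moduli space according to $\rank(M)$ and invoking a determinantal-type dimension count for block-symmetric matrices of prescribed rank; the block-symmetry constraint is exactly what produces the $\lfloor t^2/2\rfloor$ savings in the final bound, matching the equality $\lfloor t^2/2\rfloor+\lceil t^2/2\rceil=t^2$ that makes the arithmetic estimate tight precisely when $m=t$.
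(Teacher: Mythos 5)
Your $t=1$ analysis is correct and is essentially the paper's argument repackaged: the paper works on an irreducible component, uses the $\GL_r$-action to assume $y_1$ is generic, and then derives $(y_i,z_i)=\lambda_i(y_1,z_1)$ when $y_1,z_1\neq 0$, while you organize the same dichotomy via the rank of the concatenated matrix $Y=[y_1|\cdots|y_r]$. Either way one lands on the three pieces of dimensions $rs$, $rs$, $2s+r-1$ and the case analysis in $(r,s)$.

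For the general bound, however, there is a genuine gap. You stratify by $m=\dim\sum_j\im z_j$ and induct only on $t$, which (as you notice) is circular on the open stratum $m=t$. The workaround you sketch for $m=t$ does not resolve the circularity. After passing to the $\GL_t$-quotient you want to bound the space of pairs $(K,M)$ with $K=\ker Z$ and $M=YZ$ a block-symmetric $rs\times rs$ matrix with $\rank M\le t$ and $\ker M\supseteq K$, and you appeal to ``a determinantal-type dimension count for block-symmetric matrices of prescribed rank.'' But the variety of block-symmetric $rs\times rs$ matrices of rank $\le t$ is, up to a $\GL_t$-torsor, exactly $\mathcal{W}_{r,s,t}$: a block-symmetric rank-$\le t$ matrix is precisely a product $YZ$ with $(Y,Z)$ satisfying $y_iz_j=y_jz_i$, and the factorization is unique up to the $\GL_t$-action. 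So the ``determinantal-type'' estimate you invoke is equivalent to the inequality you are trying to prove, and there is no known closed formula for these block-symmetric rank loci that you could quote independently. Concretely, the fiber over a fixed $K\in\mathrm{Gr}(rs-t,rs)$ is the linear subspace $\{N\in M_{rs\times t}: n_ip_j=n_jp_i\}$ with $\pi_K=[p_1|\cdots|p_r]$, and its dimension varies with $K$; bounding the total dimension over all $K$ requires a further stratification that reproduces the problem you started with.

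The paper avoids this trap by stratifying differently: on the $\GL_r$-invariant dense subset where $y_1$ has maximal rank $m$, one projects away $z_1$, normalizes $y_1=\left[\begin{smallmatrix}I_m&0\\0&0\end{smallmatrix}\right]$ via $\GL_s\times\GL_t$, reads off a block structure on the remaining $y_i,z_j$, and finds a sub-tuple lying in $\mathcal{W}_{r-1,m,t-m}$. Since $m\ge 1$ on the relevant strata, this strictly decreases $t$, so a double induction on $(s,t)$ goes through cleanly. If you want to salvage your stratification you would need to break the circle at $m=t$ by decreasing $r$ (or one of $s,t$) rather than by appealing to a rank count on block-symmetric matrices; as written, the argument does not close.
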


\begin{proof}
The lemma clearly holds if $s=0$ or $t=0$, so we assume that $s,t\ge 1$.
We start with the following observation. A short computation shows that the action of $\GL_r$ on $M_{s\times t}^r\times M_{t\times s}^r$ defined by
$$(a_{ij})\bullet (y_1,\ldots ,y_r,z_1,\ldots ,z_r)=\left(\sum _{i=1}^ra_{1i}y_i,\ldots ,\sum _{i=1}^ra_{ri}y_i,\sum _{i=1}^ra_{1i}z_i,\ldots ,\sum _{i=1}^ra_{ri}z_i\right)$$
stabilizes the variety $\mathcal{W}_{r,s,t}$.
Since $\GL_r$ is connected, it also stabilizes each irreducible component of $\mathcal{W}_{r,s,t}$.
Therefore the subset $\mathcal{W}'_{r,s,t}$ of all $(2r)$-tuples $(y_1,\ldots ,y_r,z_1,\ldots ,z_r)\in \mathcal{W}_{r,s,t}$ such that {\it the rank of $y_1$ is not less than the rank of any linear combination of $y_1,\ldots ,y_r$} (*) is dense.

We first examine the case $t=1$. Let $\mathcal{C}$ be an irreducible component of ${\mathcal{W}_{r,s,1}}$.
If there exists a $(2r)$-tuple in $\mathcal{C}$ with $y_1\ne 0$ and $z_1\ne 0$, then the set of such elements is dense in ${\mathcal C}$ and the relations $y_1z_i=y_iz_1,2\le i\le r$ imply that for each $2\le i\le r$ there exists $\lambda _i\in k$ such that $(y_i,z_i)=\lambda _i(y_1,z_1)$.
It follows that ${\mathcal C}$ is the closure of the image of a morphism ${\mathbb A}^{s}\times {\mathbb A}^s\times{\mathbb A}^{r-1}\rightarrow {\mathcal W}_{r,s,1}$ that is injective on the open dense subset $(\mathbb{A}^s\backslash \{0\})\times (\mathbb{A}^s\backslash \{0\})\times \mathbb{A}^{r-1}$, hence $\dim \mathcal{C}=2s+r-1$.
On the other hand, if $y_1 =0$ or $z_1=0$ for each $(2r)$-tuple in $\mathcal{C}$, then by considering the $\GL_r$-action we see that we have either $y_i=0$ for $1\le i\le r$ or $z_i=0$ for $1\le i\le r$ in each $(2r)$-tuple in $\mathcal{C}$, yielding $\dim \mathcal{C}=rs$.
The description of the dimension of ${\mathcal W}_{r,s,1}$ follows by considering the difference $rs-(2s+r-1)=(r-2)(s-1)-1$.

Now we prove the general statement by induction on $s$ and $t$. We have already proved that $\dim \mathcal{W}_{r,1,1}=r+1$, therefore we assume that $s>1$ or $t>1$ and that $\dim \mathcal{W}_{\rho,\sigma,\tau}\le (\rho+1)\sigma\tau+\frac{\tau^2}{2}$ for all positive integers $\rho$ and nonnegative integers $\sigma$ and $\tau$ satisfying $\sigma\le s$ and $\tau<t$ or $\sigma<s$ and $\tau\le t$. 
For $0\leq m\leq \min\, \{ s,t\}$ let ${\mathcal W}_{(m)}$ be the set of all tuples $(y_1,\ldots ,z_r)\in{\mathcal W}_{r,s,t}$ with ${\rm rank}\, y_1=m$.
Clearly ${\mathcal W}_{(m)}$ and the intersection ${\mathcal W}'_{(m)}={\mathcal W}_{(m)}\cap{\mathcal W}'_{r,s,t}$ are quasi-affine.
Moreover, ${\mathcal W}'_{r,s,t}$ is the disjoint union of the ${\mathcal W}'_{(m)}$ and so it will suffice to prove that $\dim {\mathcal W}'_{(m)}\leq (r+1)st+\frac{t^2}{2}$ for all $m$.

Using the theorem on the dimension of fibres for the projection
\begin{align*} 
\pi_{r+1}:~~~~~~~~~~~~~~ &\mathcal{W}'_{(m)}~~~~~~~~~~~~~~\to~~~~~ M_{t\times s} \\
(y_1,\ldots ,y_r,&z_1,\ldots ,z_r) \mapsto z_1,
\end{align*}
one gets $\dim \mathcal{W}'_{(m)}\le st+\dim \mathcal{Z}_m$ where
$$\mathcal{Z}_m={\mathcal W}'_{(m)}\cap \{ (y_1,\ldots ,y_r,0,z_2,\ldots,z_r) : y_i\in M_{s\times t},z_i\in M_{t\times s}\}.$$
(We remark that every irreducible component of ${\mathcal W}'_{(m)}$ intersects non-trivially with ${\mathcal Z}_m$, since if $(y_1,\ldots ,z_r)\in{\mathcal W}'_{(m)}$ then clearly $(y_1,\ldots ,y_r,\xi z_1,\ldots, \xi z_r)\in{\mathcal W}'_{(m)}$ for all $\xi\in k$.)

For any tuple $(y_1,\ldots ,y_r,0,z_2,\ldots ,z_r)\in \mathcal{Z}_m$ there exist bases of $k^s$ and $k^t$ with respect to which
\[ y_1 =\left[
\begin{array}{cc}I_m&0\\0&0
\end{array}
\right],
\]
therefore $\mathcal{Z}_m=(\GL_s\times \GL_t)\cdot \mathcal{V}_m$ where $\mathcal{V}_m=\left\{(y_1,\ldots ,y_r,0,z_2,\ldots ,z_r)\in \mathcal{Z}_m;y_1=\left[
\begin{array}{cc}I_m&0\\0&0
\end{array}
\right]\right\}$ and  the actions of $\GL_s\times \GL_t$ on $M_{s\times t}$ and on $M_{t\times s}$ are respectively defined by $(g,h)\cdot y=gyh^{-1}$ and $(g,h)\cdot z=hzg^{-1}$ for all $(g,h)\in \GL_s\times \GL_t, y\in M_{s\times t}, z\in M_{t\times s}$. It follows that
$$\dim \mathcal{Z}_m = \dim (\GL_s\times \GL_t)\cdot \left[
\begin{array}{cc}I_m&0\\0&0
\end{array}
\right]+\dim \mathcal{V}_m=m(s+t-m)+\dim \mathcal{V}_m.$$

We now examine $\mathcal{V}_m$.
Since $y_1z_i=0$ for each $2\le i\le r$, we obtain 
\[z_i=\left[
\begin{array}{cc}0&0\\z_i'&z_i''
\end{array}
\right]\] for some $z_i'\in M_{(t-m)\times m}$ and $z_i''\in M_{(t-m)\times (s-m)}$ for $2\le i\le r$. On the other hand, the condition (*) implies that for each $2\le i\le r$ one has 
\[ y_i=\left[
\begin{array}{cc}y_i'&y_i''\\y_i'''&0
\end{array}
\right]\] for some $y_i'\in \mathfrak{gl}_m$, $y_i''\in M_{m\times (t-m)}$ and $y_i'''\in M_{(s-m)\times m}$. Moreover, the conditions $y_iz_j=y_jz_i$ yield $y_i''z_j'=y_j''z_i'$ for all $2\le i,j\le r$, therefore
\begin{multline*}
\mathcal{V}_m\subseteq 
\bigg\{\left(\left[
\begin{array}{cc}I_m&0\\0&0
\end{array}
\right] ,\left[
\begin{array}{cc}y_2'&y_2''\\y_2'''&0
\end{array}
\right] ,\ldots ,\left[
\begin{array}{cc}y_r'&y_r''\\y_r'''&0
\end{array}
\right], 0 ,\left[
\begin{array}{cc}0&0\\z_2'&z_2''
\end{array}
\right] ,\ldots ,\left[
\begin{array}{cc}0&0\\z_r'&z_r''
\end{array}
\right]\right):\\
\quad\quad
 y'_i\in \mathfrak{gl}_m,y_i'''\in M_{(s-m)\times m},z_i''\in M_{(t-m)\times (s-m)},~2\le i\le r,
(y_2'',\ldots ,y_r'',z_2',\ldots ,z_r')\in \mathcal{W}_{r-1,m,t-m} \bigg\}.
\end{multline*}
Using the induction hypothesis we have
$$\dim \mathcal{V}_m \le 
(r-1)(m^2+(s-m)m+(t-m)(s-m))
+rm(t-m)+\frac{(t-m)^2}{2}{=(r-1)st+\frac{t^2-m^2}{2}},
$$
so
$$\dim \mathcal{Z}_m\le (r-1)st+\frac{t^2-m^2}{2}+m(s+t-m)\le rst+\frac{t^2}{2}$$
for each $m\le\min \{s,t\}$. Hence, we obtain $\dim \mathcal{W}_{r,s,t}=\dim \mathcal{W}'_{r,s,t}\le st+\dim \mathcal{Z}_m\le (r+1)st+\frac{t^2}{2}$.
\end{proof}

\begin{lemma}\label{Vmlc}
Let $c,m,l$ be non-negative integers such that $2m+l\leq c$.
Then the subset $${\mathcal V}_{c,m,l}=\{ u\in\mathfrak{gl}_c : {\rm rank}(u)=c-m-l, {\rm rank}(u^2)=c-2m-l\}$$
has dimension $c^2-(2m^2+2ml+l^2)$.
\end{lemma}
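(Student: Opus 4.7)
The plan is to sandwich $\dim \mathcal{V}_{c,m,l}$ between $c^2-(2m^2+2ml+l^2)$ from above, via an incidence variety over a partial flag variety, and from below by exhibiting an explicit family of block-diagonal matrices.

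For the upper bound, introduce the partial flag variety $\mathcal{F}$ parametrizing chains $V_1\subseteq V_2\subseteq k^c$ with $\dim V_1=m+l$ and $\dim V_2=2m+l$, and the incidence
\[\widetilde{\mathcal{V}}=\{(u,V_1,V_2)\in\mathfrak{gl}_c\times\mathcal{F}:u(V_1)=0,\ u(V_2)\subseteq V_1\}.\]
The projection $\widetilde{\mathcal{V}}\to\mathcal{F}$ is a vector bundle: fixing complements $V_2=V_1\oplus V_2'$ and $k^c=V_2\oplus V_3$, the fibre over $(V_1,V_2)$ is parametrized by $(u|_{V_2'},u|_{V_3})\in\Hom(V_2',V_1)\oplus\Hom(V_3,k^c)$, of dimension $m(m+l)+(c-2m-l)c$. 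Together with $\dim\mathcal{F}=m(m+l)+(2m+l)(c-2m-l)$ this collapses to $\dim\widetilde{\mathcal{V}}=c^2-(2m^2+2ml+l^2)$. Since every $u\in\mathcal{V}_{c,m,l}$ lifts to $(u,\ker u,\ker u^2)\in\widetilde{\mathcal{V}}$ — the two kernel dimensions are $m+l$ and $2m+l$ by the rank hypotheses, and $u(\ker u^2)\subseteq\ker u$ is automatic — the forgetful map $\widetilde{\mathcal{V}}\to\mathfrak{gl}_c$ has image containing $\mathcal{V}_{c,m,l}$, giving the desired upper bound.

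For the lower bound, let $N\in\mathfrak{gl}_{2m+l}$ be a nilpotent matrix of Jordan type $[2^m,1^l]$ and let $\mathcal{D}$ be the set of diagonal matrices in $\mathfrak{gl}_{c-2m-l}$ with pairwise distinct non-zero entries. For $D\in\mathcal{D}$, set $u_D=\diag(N,D)\in\mathfrak{gl}_c$; since $D^i$ is invertible, $\dim\ker u_D^i=\dim\ker N^i$ for $i=1,2$, giving $u_D\in\mathcal{V}_{c,m,l}$. The eigenvalues $0,d_1,\ldots,d_{c-2m-l}$ of $u_D$ being pairwise distinct force any element of $Z_G(u_D)$ to be block-diagonal with respect to the splitting $k^c=k^{2m+l}\oplus k^{c-2m-l}$, whence
\[\dim Z_G(u_D)=\dim Z_{\GL_{2m+l}}(N)+(c-2m-l)=(m+l)^2+m^2+(c-2m-l)\]
by the centralizer formula recalled in Section \ref{gradingsec}. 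The morphism $\GL_c\times\mathcal{D}\to\mathfrak{gl}_c$, $(g,D)\mapsto gu_Dg^{-1}$, has image in $\mathcal{V}_{c,m,l}$, and its generic fibres have dimension $\dim Z_G(u_D)$ (the Jordan decomposition of $gu_Dg^{-1}$ recovers $D$ up to a permutation of its entries), so the image has dimension $c^2+(c-2m-l)-\dim Z_G(u_D)=c^2-(2m^2+2ml+l^2)$.

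The computations are routine; the one piece of insight is to pick the Jordan type $[2^m,1^l]$ on a generalized zero eigenspace of size $2m+l$, which saturates the hypothesized rank conditions while keeping the centralizer as small as possible, and to set up the flag-variety incidence on the \emph{right} pair of kernels $\ker u\subseteq\ker u^2$.
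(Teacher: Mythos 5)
Your proof is correct, and the upper-bound argument is genuinely different from the paper's. The paper stratifies $\mathcal{V}_{c,m,l}$ by the Jordan type $[\lambda,1^l]$ of the nilpotent part (the rank conditions force the nilpotent part to have $m+l$ blocks, exactly $l$ of size $1$), computes the dimension of each stratum as $c^2-\dim\mathfrak{z}_{\mathfrak{gl}_{n_\lambda}}(u'_\lambda)$ via the same sort of "conjugation of a slice" argument you use for the lower bound, and observes that this is maximized precisely for $\lambda=[2^m]$, since the transposed partition is then $[m+l,m]$ with no further parts. Your flag-variety incidence $\widetilde{\mathcal{V}}\to\mathcal{F}$ replaces this stratum-by-stratum maximization with a single uniform overcount: every $u$ with $\operatorname{rank}(u)\le c-m-l$ and $\operatorname{rank}(u^2)\le c-2m-l$ lives on the $c^2-(2m^2+2ml+l^2)$-dimensional vector bundle $\widetilde{\mathcal V}$, so the bound drops out without ever naming the Jordan types. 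What the paper's route buys is that it also exhibits each $\mathcal{V}_{c,\lambda}$ explicitly and shows which stratum is the largest; what yours buys is a cleaner, essentially case-free upper bound. Your lower bound coincides with the paper's extremal stratum $\lambda=[2^m]$, with the same centralizer computation. One small point worth being careful about (and which you handle correctly): you need the diagonal entries of $D$ to be nonzero as well as distinct — nonzero so that $\ker u_D^i=\ker N^i$ and the rank conditions hold exactly, distinct so that the centralizer of $u_D$ splits as $Z_{\GL_{2m+l}}(N)\times(k^\times)^{c-2m-l}$.
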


\begin{proof}
It is easy to see that ${\mathcal V}_{c,m,l}$ 
is locally closed subset since the rank conditions ${\rm rank}(u)\leq a$ and ${\rm rank}(u^2)\leq b$ are closed conditions for any $a,b\in{\mathbb N}_0$.
Any $u\in{\mathcal V}_{c,m,l}$ is $\GL_c$-conjugate to a matrix in Jordan normal form $\begin{bmatrix} u' & 0 \\ 0 & u''\end{bmatrix}$ where $u''$ is invertible and $u'$ is nilpotent, with $(m+l)$ Jordan blocks, exactly $l$ of which are of order 1.
Given $\lambda=[\lambda_1\geq\ldots\geq \lambda_m]$ such that $\lambda_m>1$ and $n_\lambda=\sum_{i=1}^m\lambda_i+l\leq c$ we choose a nilpotent $n_\lambda\times n_\lambda$ matrix $u'_\lambda$ with Jordan blocks of sizes $\lambda_1,\ldots ,\lambda_m,1,\ldots ,1$.
Then the set $${\mathcal V}_{c,\lambda}:=\GL_c\cdot \left\{ \begin{bmatrix} u'_\lambda & 0 \\ 0 & u''\end{bmatrix} : u''\in\GL_{c-n_\lambda}\right\}$$ is Zariski constructible (and irreducible) and ${\mathcal V}_{c,m,l}$ is the (disjoint) union of these subsets, taken over all relevant partitions $\lambda$.
To determine dimensions we note that $\GL_c\cdot{\mathcal S}_\lambda$ is dense in ${\mathcal V}_{c,\lambda}$ where
$${\mathcal S}_\lambda=\left\{ \begin{bmatrix} u'_\lambda & 0 \\ 0 & u''\end{bmatrix} : u''\in\GL_{c-n_\lambda}\;\mbox{diagonal with distinct eigenvalues}\;\right\}$$ and that any $\GL_c$-orbit in ${\mathcal V}_{c,\lambda}$ has finite (possibly empty) intersection with ${\mathcal S}_\lambda$, by the uniqueness of the Jordan normal form.
The transpose of the partition $[\lambda,1^l]$ is $[m+l,m,\ldots]$, where there are non-zero terms after $m$ if and only if $\lambda_1>2$.
Since $\dim{\mathfrak z}_{\mathfrak{gl}_{n_\lambda}}(u_\lambda')$ is the sum of the squares of the parts of $[\lambda,1^l]^T$, it follows that $$\dim{\mathcal V}_{c,\lambda} = c^2-\dim N_{\GL_c}({\mathcal S}_\lambda)+(c-n_\lambda)=c^2-\dim {\mathfrak z}_{\mathfrak{gl}_{n_\lambda}}(u'_\lambda)=c^2-((m+l)^2+m^2+\ldots )$$
which is maximal precisely when $\lambda=[2^m]$.
The statement on the dimension follows.
\end{proof}

The following lemma plays a crucial role in establishing the dimension of $C_r(\N(\mathfrak{gl}_n))$, see the proof of Lemma \ref{GLblocksize4}.

\begin{lemma}\label{y,z,w,v,u}
Let $r>5$ be a positive integer and $a,b,c$ nonnegative integers. Let
$\mathcal{Z}_{r,a,b,c}$ be the subvariety of $M_{a\times c}^r\times M_{b\times c}^r\times M_{c\times a}^r\times M_{c\times b}^r\times \mathfrak{gl}_c^r$ consisting of all tuples $$(y_1,\ldots ,y_r,z_1,\ldots ,z_r,w_1,\ldots ,w_r,v_1,\ldots ,v_r,u_1,\ldots ,u_r)$$ satisfying the following equations:
$$y_iw_j=y_jw_i,z_iv_j=0,y_iu_j=y_ju_i,z_iu_j=z_ju_i,u_iw_j=u_jw_i,u_iv_j=u_jv_i$$
for each $1\le i,j\le r$. Then:

a) We have $\dim \mathcal{Z}_{r,a,b,0}=0$, $\dim{\mathcal Z}_{r,1,0,1}=r+2$ and $\dim{\mathcal Z}_{r,0,0,1}=r$; if $(a,b)\not\in\{ (1,0), (0,0)\}$ then $\dim \mathcal{Z}_{r,a,b,1}\le r(a+b)+1$.

b) If $c\geq 2$ then
$$\dim\mathcal{Z}_{r,a,b,c}\leq \left\{ \begin{array}{cc} \frac{a^2}{11}+c^2+2ac+2bc+(r-1)(c^2+\frac{(a+b)^2}{2}) & \mbox{if $c\geq a+b$,} \\
 \frac{a^2}{11}+c^2+2ac+2bc+(r-1)(\frac{c^2}{2}+(a+b)c) & \mbox{if $c\leq a+b$.}
\end{array}\right.$$
\end{lemma}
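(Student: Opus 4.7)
The plan splits into parts (a) and (b).

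\textbf{Part (a).} I would handle the cases $c=0$ and $c=1$ by direct case analysis. When $c=0$ every matrix variable vanishes, giving dimension $0$. When $c=1$, each $u_i$ is a scalar and each of $y_i,z_i,w_i,v_i$ is a column or row vector, so the quadratic relations $y_iw_j=y_jw_i$, $y_iu_j=y_ju_i$, $u_iw_j=u_jw_i$, $z_iu_j=z_ju_i$, $u_iv_j=u_jv_i$ all translate into rank-$\le 1$ conditions on certain small matrices. The remaining relation $z_iv_j=0$ is an outer-product condition between a column and a row vector, and so forces either all $z_i=0$ or all $v_j=0$; the $\GL_r$-action noted after Lemma \ref{y_iz_j=y_jz_i} can be used to put the distinguished vector into standard form. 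Enumerating the resulting cases gives the claimed dimensions: $(a,b)=(1,0)$ yields the variety of rank-$\le 1$ matrices in $M_{r\times 3}$ of dimension $r+2$, $(a,b)=(0,0)$ gives a free space of dimension $r$, and otherwise the various subcases all fall within the bound $r(a+b)+1$.

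\textbf{Part (b).} For $c\ge 2$ I would use a projection--fiber argument combined with induction on $c$. Consider the projection
\[
\pi:\mathcal{Z}_{r,a,b,c}\longrightarrow M_{a\times c}\times M_{b\times c}\times M_{c\times a}\times M_{c\times b}\times \mathfrak{gl}_c,\quad (y_1,\ldots,u_r)\mapsto (y_1,z_1,w_1,v_1,u_1).
\]
The target has dimension $c^2+2ac+2bc$, which already accounts for the leading constant terms in both bounds. For a point in the image the fiber consists of $(r-1)$-tuples $(y_j,z_j,w_j,v_j,u_j)_{j\ge 2}$ subject to linear compatibility with $(y_1,z_1,w_1,v_1,u_1)$ (coming from the pair-relations with index $1$) and to the original quadratic relations amongst themselves. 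Stratifying the base by the Jordan type of $u_1$ using Lemma \ref{Vmlc}, I would conjugate $u_1$ to a standard block-diagonal form (invertible part plus a nilpotent part controlled by $m,l$). The compatibility relations then determine the image-components of $y_j,z_j,w_j,v_j$ in terms of the initial data and the $u_j$'s, while the kernel-components contribute smaller instances of $\mathcal{Z}_{r,a',b',c'}$ with $c'<c$, together with bounded-dimension contributions from Lemma \ref{y_iz_j=y_jz_i} applied to $(y,w)$ and Lemma \ref{y_iz_j=0} applied to $(z,v)$. Summing these contributions over the stratum and optimizing over $m,l$ and the ranks of $y_1,z_1$ should yield the two-case bound, with the split $c\ge a+b$ vs.\ $c\le a+b$ arising from whether the optimum is attained at the boundary ($m=0$) or at an interior value.

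\textbf{Main obstacle.} The sharpest difficulty will be obtaining the precise correction term $\frac{a^2}{11}$. This coefficient is clearly engineered for the downstream application in Lemma \ref{GLblocksize4}, and most likely emerges from a recursive step whose base case (small $c$, handled by part (a) or the preceding technical lemmas) contributes a term of order $a^2$ that gets trimmed by repeated application of the induction, with the specific denominator $11$ reflecting the hypothesis $r>5$. The book-keeping across strata of $u_1$, combined with the bifurcation between the regimes $c\ge a+b$ and $c\le a+b$, will require careful rank-by-rank optimization. Following the authors' stated practice, I would expect many of the heaviest calculations to be deferred to the Appendix.
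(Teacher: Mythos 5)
Part (a) of your proposal is essentially correct and matches the paper. Your observation that for $(a,b,c)=(1,0,1)$ the relevant variety is the rank-$\le 1$ locus in $M_{r\times 3}$ is a nice way to see the $r+2$, and the $\GL_r$-dichotomy (``some $u_i\ne 0$'' vs.\ ``all $u_i=0$'') is exactly the paper's argument; the former case gives $\dim = r+2a+b$ and the latter gives $\dim\le r(a+b)+1$ via Lemma~\ref{y_iz_j=y_jz_i}, and one checks that $r+2a+b\le r(a+b)+1$ fails precisely for $(a,b)\in\{(1,0),(0,0)\}$.

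For part (b) your general strategy --- stratify by the Jordan type $(m,l)$ of $u_1$ using the $\GL_r$-action to ensure $u_1$ has maximal rank and maximal rank of square, and feed the residual constraints into Lemmas~\ref{y_iz_j=0}, \ref{y_iz_j=y_jz_i} and \ref{Vmlc} --- is the right one and is what the paper does. However, there are two places where your plan departs from a workable argument. First, you propose an \emph{induction on $c$}, claiming that after blocking out $u_1$ the ``kernel-components contribute smaller instances of $\mathcal{Z}_{r,a',b',c'}$ with $c'<c$.'' This is not what happens: the paper first uses the conical structure to pass to the fiber over $y_1=z_1=w_1=v_1=0$ and $u_2=\cdots=u_r=0$, so the conditions $y_iu_j=y_ju_i$, $z_iu_j=z_ju_i$, $u_iw_j=u_jw_i$, $u_iv_j=u_jv_i$ involving $u_j$ for $j\ge 2$ all vanish identically, and the residual constraints on the surviving blocks of $(y_i,z_i,w_i,v_i)_{i\ge 2}$ are \emph{exactly} instances of the previously established varieties $\mathcal{W}_{r-1,a,l}$ and $\mathcal{Y}_{r-1,b,l}$ (on the $l$-dimensional kernel-of-both part of $u_1$) together with the parameter count coming from rows/columns lying in the $m$-dimensional kernels of $u_1'$ and $u_1'^T$. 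No smaller $\mathcal{Z}$-variety arises, and there is no induction on $c$ --- all $c\ge 2$ are handled at once. An attempted induction on $c$ would require tracking compatibility relations with a nonzero $(y_1,\ldots,u_1)$ in a generic fiber, which substantially complicates the picture and is not carried out in your sketch.

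Second, your explanation of the $\frac{a^2}{11}$ term is not correct, and this is a concrete gap rather than just a different route. That term does not come from a recursive base case; it arises from a single quadratic optimization in the variable $l$. After collecting the contributions one gets a term $al + \lfloor\tfrac{l^2}{4}\rfloor - \lceil\tfrac{l^2}{2}\rceil - (r-1)\tfrac{l^2}{2}$ (together with the $m$-terms handled separately), and the hypothesis $r\ge 6$ is used exactly to write $\lfloor\tfrac{l^2}{4}\rfloor\le \tfrac{(2r-11)l^2}{4}=\tfrac{(r-1)l^2}{2}-\tfrac{9l^2}{4}$, which combined with $-\lceil\tfrac{l^2}{2}\rceil\le -\tfrac{l^2}{2}$ yields $al-\tfrac{11l^2}{4}$; the maximum of this quadratic in $l$ is $\tfrac{a^2}{11}$ (attained at $l=\tfrac{2a}{11}$). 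Without identifying this computation one cannot produce the stated constant, so the sketch does not establish the bound of part (b).
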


\begin{proof}
As in Lemma \ref{y_iz_j=y_jz_i}, the irreducible components of $\mathcal{Z}={\mathcal Z}_{r,a,b,c}$ are invariant under the action of $\GL_r$, hence the subset $\mathcal{Z}'$ of ${\mathcal Z}$ consisting of all tuples $(y_1,\ldots ,u_r)\in \mathcal{Z}$ such that 
{the rank of $u_1$ is not less than the rank of any linear combination of $u_1,\ldots ,u_r$ and the rank of $u_1^2$ is not less than the rank of the square of any linear combination of $u_1,\ldots ,u_r$}
 is dense.
In particular, $\dim \mathcal{Z}=\dim \mathcal{Z}'$.

As the case $c=0$ is trivial, we first consider the case $c=1$. Let $\mathcal{C}$ be any component of $\mathcal{Z}$. The condition $z_iv_j=0$ for all $1\le i,j\le r$ implies $z_1=\cdots =z_r=0$ or $v_1=\cdots =v_r=0$. By symmetry we can assume that the latter holds on $\mathcal{C}$. If there exists a $(5r)$-tuple in $\mathcal{C}$ such that $u_i\ne 0$ for some $i$, then $y_j=\frac{u_j}{u_i}y_i$, $z_j=\frac{u_j}{u_i}z_i$ and $w_j=\frac{u_j}{u_i}w_i$ for $1\le j\le r$, which yields $\dim \mathcal{C}=r+2a+b$. On the other hand, if $u_i=0$ for all $1\le i\le r$ and all tuples in $\mathcal{C}$, then Lemma \ref{y_iz_j=y_jz_i} implies $\dim \mathcal{C}\le r(a+b)+1$. This proves (a).

In the rest of the proof we assume $c\ge 2$.
For all nonnegative integers $m$ and $l$ with $2m+l\le c$ we define $\mathcal{Z}_{m,l}$ as the subset of $\mathcal{Z}'$ consisting of all tuples $(y_1,\ldots ,u_r)$ satisfying $u_1\in{\mathcal V}_{c,m,l}$. (See Lemma \ref{Vmlc}.) This is clearly a locally closed set and the union of all $\mathcal{Z}_{m,l}$ is $\mathcal{Z}'$. Moreover, if $(y_1,\ldots ,u_r)\in \mathcal{Z}_{m,l}$, then the $\GL_r$-action gives us $u_i\in \overline{\mathcal{V}_{c,m,l}}$ for $i=2,\ldots ,r$.
We consider the projection \[\pi \colon \mathcal{Z}_{m,l}\to M_{a\times c}\times M_{b\times c}\times M_{c\times a}\times M_{c\times b}\cong {\mathbb A}^{2(a+b)c}\] defined by
$$\pi (y_1,\ldots ,y_r,z_1,\ldots ,z_r,w_1,\ldots ,w_r,v_1,\ldots ,v_r,u_1,\ldots ,u_r)=(y_1,z_1,w_1,v_1).$$
The preimage $\pi ^{-1}(0,0,0,0)$ intersects every irreducible component of ${\mathcal Z}_{m,l}$, since, for any $t\in k$ and any $(y_1,\ldots ,u_r)\in{\mathcal Z}_{m,l}$, we clearly have $(ty_1,\ldots ,tv_r,u_1,\ldots ,u_r)\in{\mathcal Z}_{m,l}$.
Hence $\dim \mathcal{Z}_{m,l}\le 2ac+2bc+\dim \pi ^{-1}(0,0,0,0)$.
Next, we consider the projection $\pi '\colon \pi ^{-1}(0,0,0,0)\to \overline{\mathcal{V}_{c,m,l}}^{r-1}$ defined by
$$\pi' (0,y_2,\ldots ,y_r,0,z_2,\ldots ,z_r,0,w_2,\ldots ,w_r,0,v_2,\ldots ,v_r,u_1,\ldots ,u_r)=(u_2,\ldots ,u_r).$$
As above, we observe that $\pi^{-1}(0,0,0,0)$ is invariant under multiplying the last $r-1$ components by an arbitrary scalar, hence $\pi '^{-1}(0,\ldots ,0)$ intersects every irreducible component of $\pi^{-1}(0,0,0,0)$. Consequently, $\dim \mathcal{Z}_{m,l}\le 2ac+2bc+(r-1)(c^2-2m^2-2ml-l^2)+\dim \mathcal{W}_{m,l}$ where
\[\mathcal{W}_{m,l}=
\left\{
\begin{array}{cc}
& (y_2 ,..,y_r,z_2,..,z_r,w_2,..,w_r,v_2,..,v_r,u_1)\in M_{a\times c}^{r-1}\times M_{b\times c}^{r-1}\times M_{c\times a}^{r-1}\times M_{c\times b}^{r-1}\times \mathcal{V}_{c,m,l}:\\
& y_iu_1=0,z_iu_1=0,u_1w_i=0,u_1v_i=0,y_iw_j=y_jw_i,z_iv_j=0, 2\le  i,j\le r.
\end{array} \right\}
\]
For each tuple $(y_2,\ldots ,y_r,z_2,\ldots ,z_r,w_2,\ldots ,w_r,v_2,\ldots ,v_r,u_1)\in \mathcal{W}_{m,l}$, with respect to a basis of $k^c$, we can assume that there exists nonnegative integer $t\le c-2m-l$ such that 
\[ u_1=\left[
\begin{array}{ccc}u_1'&0&0\\0&0&0\\0&0&u_1''
\end{array}
\right]\]
for some invertible $t\times t$ matrix $u_1''$ and some nilpotent $(c-t-l)\times (c-t-l)$ matrix $u_1'$ in the Jordan canonical form which has $m$ Jordan blocks, all of them of order more than 1. Then
$$y_i=\left[
\begin{array}{ccc}y_i'&y_i''&0
\end{array}
\right] ,\quad z_i=\left[
\begin{array}{ccc}z_i'&z_i''&0
\end{array}
\right] ,\quad w_i=\left[
\begin{array}{c}w_i'\\w_i''\\0
\end{array}
\right] ,\quad v_i=\left[
\begin{array}{c}v_i'\\v_i''\\0
\end{array}
\right]$$
for $2\le i\le r$ where $y_i'\in M_{a\times (c-t-l)},y_i''\in M_{a\times l},z_i'\in M_{b\times (c-t-l)},z_i''\in M_{b\times l},w_i'\in M_{(c-t-l)\times a},w_i''\in M_{l\times a},v_i'\in M_{(c-t-l)\times b},v_i''\in M_{l\times b}$, the transposes of the rows of the matrices $y_i'$ and $z_i'$ belong to the $m$-dimensional kernel of $u_1'^T$ and the columns of the matrices $w_i'$ and $v_i'$ belong to the $m$-dimensional kernel of $u_1'$.
Therefore $y_i'w_j'=0$ and $z_i'v_j'=0$ for all $2\le i,j\le r$.
The conditions $y_iw_j=y_jw_i$ and $z_iv_j=0$ are then equivalent to $y_i''w_j''=y_j''w_i''$ and $z_i''v_j''=0$ for $2\le i,j\le r$.
Lemmas \ref{y_iz_j=0}, \ref{y_iz_j=y_jz_i} and \ref{Vmlc} now imply
\begin{align*}
\dim \mathcal{W}_{m,l} &\le \dim \mathcal{V}_{c,m,l}+2(r-1)(a+b)m+ral+\left\lfloor\frac{l^2}{2}\right\rfloor+(r-1)bl+\left\lfloor \frac{l^2}{4}\right\rfloor\\
&=c^2-2m^2-2ml+\left\lfloor \frac{l^2}{4}\right\rfloor -\left\lceil\frac{l^2}{2}\right\rceil +al+(r-1)(a+b)(2m+l),
\end{align*}
therefore
$$\dim \mathcal{Z}_{m,l}\le al+c^2+2ac+2bc-2m^2-2ml+\left\lfloor \frac{l^2}{4}\right\rfloor -\left\lceil \frac{l^2}{2}\right\rceil+(r-1)(c^2-2m^2-2ml-l^2+(a+b)(2m+l)).$$
Since $r\ge 6$, we clearly have
$$\left\lfloor \frac{l^2}{4}\right\rfloor\leq \frac{(2r-11)l^2}{4}=\frac{(r-1)l^2}{2}-\frac{9l^2}{4}.$$
Furthermore, $al-\left\lceil\frac{l^2}{2}\right\rceil-\frac{9l^2}{4}\leq al-\frac{11l^2}{4}\leq \frac{a^2}{11}$ and therefore $$al-\left\lceil\frac{l^2}{2}\right\rceil-\frac{9l^2}{4}+c^2+2ac+2bc-2m^2-2ml\le \frac{a^2}{11}+c^2+2ac+2bc.$$
On the other hand, since $2m+l\le c$, the expression
\begin{align*}
c^2-2m^2-2ml-\frac{l^2}{2}+(a+b)(2m+l) &=c^2-\frac{1}{2}(2m+l)^2+(a+b)(2m+l)
\end{align*}
is maximal if $2m+l=\min\{a+b,c\}$, so
\[ c^2-\frac{1}{2}(2m+l)^2+(a+b)(2m+l)\le 
\begin{cases}
c^2+\frac{(a+b)^2}{2}\quad &\mathrm{if~~} a+b\le c,\\
\frac{c^2}{2}+(a+b)c &\mathrm{if~~} a+b\ge c.
\end{cases}
\]
It follows that
\[ \dim \mathcal{Z'}=\max_{m,l}\{\dim\mathcal{Z}_{m,l}\}\le 
\begin{cases}
\frac{a^2}{11}+c^2+2ac+2bc+(r-1)\left(c^2+\frac{(a+b)^2}{2}\right) &\text{~if~~} a+b\le c,\\
\frac{a^2}{11}+c^2+2ac+2bc+(r-1)\left(\frac{c^2}{2}+(a+b)c\right) &\text{~if~~} a+b\ge c,
\end{cases}\]
hence completing our proof.
\end{proof}

\section{Dimension of the commuting variety}\label{bounding}

In this section, we use the estimates established in the previous two sections to get upper bounds for $\dim C_r(\N(\g))$.
When $r$ is large enough, we are able to exactly describe the dimension of $C_r(\N(\g))$ and the irreducible components of maximal dimension.
We assume from now on that the characteristic of $k$ is not equal to 2 or 3.
We first explain our general strategy.

\setcounter{subsection}{-1}

\subsection{General stategy}\label{strategy} Although we focus in this paper on the group $\GL_n$, the present strategy is applicable to any reductive algebraic group.
Recall that $\N(\g)$ is the nilpotent cone of $\g$.
For each element $e$ in $\N(\g)$, set $C(e)=\overline{G\cdot (e,C_{r-1}(\mathfrak{z}_\g(e)\cap \mathcal{N}(\g)))}$.
Then we have for all $r\ge 2$  $$C_r(\mathcal{N}(\g))\:=\bigcup _{\calO_e\in \mathcal{N}(\g)/G}C(e).$$
Since this union is finite, each irreducible component of $C_r(\mathcal{N}(\g))$ is an irreducible component of some $C(e)$ with $e\in \mathcal{N}(\g)$.
Generalizing an idea in \cite[Prop. 2.1]{Pr:2003}, there is an action of $\GL_r$ on $C_r(\mathcal{N}(\g))$ defined by
$$(a_{ij})\bullet (x_1,\ldots ,x_r)=\left(\sum _{i=1}^ra_{1i}x_i,\ldots ,\sum _{i=1}^ra_{ri}x_i\right)$$
which stabilizes each irreducible component of $C_r(\mathcal{N}(\g))$.
Thus each irreducible component of $C_r(\mathcal{N}(\g))$ is a subset of $C'(e)=\overline{G \cdot (e,C'_{r-1}(\mathfrak{z}_\g(e)))}$ for some $e\in \mathcal{N}(\g)$, where
\[ C'_{r-1}(\mathfrak{z}_\g(e))=\left\{(y_1,\ldots ,y_{r-1})\in C_{r-1}(\mathfrak{z}_\g(e)\cap\N(\g)): e+\sum _{i=1}^{r-1}ky_i\subseteq \overline{\mathcal{O}_e}\right\}.\]
We earlier (before Lemma \ref{redcentlem}) introduced the subset $\mathfrak{z}_\g'(e)$ of $\mathfrak{z}_\g(e)$ consisting of all $y$ such that $ke+ky\subseteq\overline{\mathcal{O}_e}$.
Clearly $C'_{r-1}(\mathfrak{z}_\g(e))\subseteq C_{r-1}({\mathfrak z}_\g'(e))$.
We can obtain an upper bound for $\dim C_r(\N(\g))$ by determining bounds for $\dim C'_{r-1}(\mathfrak{z}_\g(e))$ or $\dim C'(e)$, for $e$ belonging to each orbit in $\N(\g)$.

\begin{proposition}\label{square_zero}
Let $x\in \mathcal{N}(\mathfrak{gl}_n)$ be a square zero matrix and let $s=\rank(x)$ and $t=n-2s$. Then for $r\ge 1$, we have
\[\dim C'_r(\mathfrak{z}_\g(x))=rs(s+t).\]
\end{proposition}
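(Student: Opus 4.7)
The plan is to establish the equality $\dim C'_r(\mathfrak{z}_\g(x)) = rs(s+t)$ by proving matching upper and lower bounds, with all the substantive geometric input already packaged in Lemma \ref{yzlem}.

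For the upper bound, I will show that $C'_r(\mathfrak{z}_\g(x)) \subseteq \mathfrak{z}'_\g(x)^r$ and then invoke Lemma \ref{yzlem}. Given $(y_1,\ldots,y_r) \in C'_r(\mathfrak{z}_\g(x))$, specialising $\eta_j = 0$ for $j \neq i$ in the condition $x + \sum_j \eta_j y_j \in \overline{\mathcal{O}_x}$ yields $x + \eta_i y_i \in \overline{\mathcal{O}_x}$ for every $\eta_i \in k$. Since $\overline{\mathcal{O}_x}$ is conical, $\xi x + \xi\eta_i y_i \in \overline{\mathcal{O}_x}$ for every $\xi \in k$; varying $\xi \in k^\times$ then taking the limit $\xi \to 0$ using closedness gives $kx + ky_i \subseteq \overline{\mathcal{O}_x}$, i.e.\ $y_i \in \mathfrak{z}'_\g(x)$. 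Lemma \ref{yzlem} then forces $\dim C'_r(\mathfrak{z}_\g(x)) \leq r\dim \mathfrak{z}'_\g(x) = rs(s+t)$.

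For the lower bound, I exhibit the abelian subalgebra $\mathfrak{u}_{s,s+t} \subseteq \mathfrak{z}'_\g(x)$ appearing on the right-hand side of \eqref{centeq}. By Remark \ref{CFPremark} it is the nilradical of a maximal parabolic, has dimension $s(s+t)$, consists entirely of square-zero matrices, and contains $x$ (visibly, for the standard form used in the proof of Lemma \ref{yzlem}). Consequently any tuple $(y_1,\ldots,y_r) \in \mathfrak{u}_{s,s+t}^r$ consists of pairwise commuting (indeed, mutually annihilating) nilpotent elements of $\mathfrak{z}_\g(x)$, and the affine span $x + \sum_i k y_i$ is contained in $\mathfrak{u}_{s,s+t}$; since every matrix in $\mathfrak{u}_{s,s+t}$ is square-zero of rank at most $s$, it lies in $\overline{\mathcal{O}_x}$, so the affine-span condition is automatic. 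Hence $\mathfrak{u}_{s,s+t}^r \subseteq C'_r(\mathfrak{z}_\g(x))$, giving $\dim C'_r(\mathfrak{z}_\g(x)) \geq rs(s+t)$.

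Combining the two inequalities completes the proof. There is no serious obstacle here: the only mildly delicate step is the upper-bound reduction to $\mathfrak{z}'_\g(x)^r$, which requires using both the conical structure and the closedness of $\overline{\mathcal{O}_x}$; once this is in place, the content of Lemma \ref{yzlem} together with the explicit parabolic nilradical $\mathfrak{u}_{s,s+t}$ closes the gap in both directions simultaneously.
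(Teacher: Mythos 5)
Your proposal is correct and takes essentially the same route as the paper: the upper bound via the containment $C'_r(\mathfrak{z}_\g(x)) \subseteq \mathfrak{z}_\g'(x)^r$ (which the paper records, slightly more sharply as $C'_r(\mathfrak{z}_\g(x)) \subseteq C_r(\mathfrak{z}_\g'(x))$, in \S\ref{strategy}) combined with Lemma \ref{yzlem}, and the lower bound by exhibiting $\mathfrak{u}_{s,s+t}^r$ as an explicit subvariety of the claimed dimension. The only addition you make is spelling out, via the conical structure and closedness of $\overline{\mathcal{O}_x}$, the inclusion that the paper presents as immediate.
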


\begin{proof}
We have $C'_r({\mathfrak z}_\g(x))\subseteq C_r({\mathfrak z}_\g'(x))$ and therefore $\dim C'_r({\mathfrak z}_\g(x))\leq r\dim {\mathfrak z}'_\g(x)$.
By Lemma \ref{yzlem} we have $\dim C'_r({\mathfrak z}_\g(x))\leq rs(s+t)$.
To obtain equality, write $x=\begin{bmatrix} 0&0&I_s\\0&0&0\\0&0&0\end{bmatrix}$ as in the proof of Lemma \ref{yzlem}.
Then ${\mathfrak u}_{s,s+t}^r$ (see Remark \ref{CFPremark})
is a subset of $C'_r(\mathfrak{z}_\g(x))$ of dimension $rs(s+t)$, hence we have $\dim C'_r({\mathfrak z}_\g(x))\geq rs(s+t)$ too.
\end{proof}

\begin{corollary}\label{square_zeroC'(x)}
Let $x\in \mathcal{N}(\mathfrak{gl}_n)$ be a square zero matrix and let $s=\rank(x)\le \lfloor \frac{n}{2}\rfloor$ and $t=n-2s$. Then $\dim C'(x)=(r+1)s(s+t)$. In particular, if we assume further that $\rank(x)=\lfloor \frac{n}{2}\rfloor$, then 
\[ \dim C'(x)=(r+1)\left\lfloor \frac{n^2}{4}\right\rfloor. \]
Specifically, if $n=2m$ then $C'(x)=G\cdot {\mathfrak u}_{m,m}^r$ is irreducible; if $n=2m+1$ then $C'(x)=G\cdot {\mathfrak u}_{m+1,m}^r\cup G\cdot {\mathfrak u}_{m,m+1}^r$ is equidimensional.
\end{corollary}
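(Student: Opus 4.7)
The plan is to extract $\dim C'(x)$ from the defining formula $C'(x) = \overline{G \cdot (x, C'_{r-1}(\mathfrak{z}_\g(x)))}$ by viewing this as a $G$-saturation, and then --- in the maximal-rank case --- to identify $C'(x)$ explicitly as the union of one or two homogeneous bundles over $G/P$.

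For the dimension, I will analyse the morphism $\phi\colon G \times C'_{r-1}(\mathfrak{z}_\g(x)) \to \g^r$, $(g, z) \mapsto g \cdot (x, z)$. Its fibres are translates of $Z_G(x)$ because $C'_{r-1}(\mathfrak{z}_\g(x))$ is $Z_G(x)$-stable (it is defined entirely in terms of $x$ and of elements of $\mathfrak{z}_\g(x)$). All fibres thus have the same dimension $\dim Z_G(x)$, and the fibre dimension theorem gives
\[ \dim C'(x) = \dim G - \dim Z_G(x) + \dim C'_{r-1}(\mathfrak{z}_\g(x)) = \dim {\mathcal O}_x + (r-1)s(s+t), \]
using Proposition \ref{square_zero}. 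The proof of Lemma \ref{yzlem} gives $\dim {\mathcal O}_x = 2s(s+t)$, and combining these will yield $\dim C'(x) = (r+1)s(s+t)$. When $s = \lfloor n/2\rfloor$ one checks directly that $s(s+t) = \lfloor n^2/4\rfloor$ in either parity.

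For the explicit description I will take $x$ in the standard form \eqref{centeq} and solve the conditions defining $\mathfrak{z}_\g'(x)$, namely $y \in \mathfrak{z}_\g(x)$ together with $(ax+by)^2 = 0$ and $\mathrm{rank}(ax+by) \leq m$ for all $a,b \in k$. In the even case $n = 2m$ the vanishing of the off-diagonal block of $(ax+by)^2$ (using $\mathrm{char}\, k \neq 2$) will force $y \in \mathfrak{u}_{m,m}$, so $\mathfrak{z}_\g'(x) = \mathfrak{u}_{m,m}$. In the odd case $n = 2m+1$ the analogous analysis, splitting on whether the entries I label $B$ or $f$ vanish, yields $\mathfrak{z}_\g'(x) = \mathfrak{u}_{m,m+1} \cup \mathfrak{u}_{m+1,m}$, the two maximal commutative nil subalgebras of $\g$ containing $x$ (cf.\ Remark \ref{CFPremark}). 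A short commutator calculation will then show that for $y_1 \in \mathfrak{u}_{m,m+1}$ and $y_2 \in \mathfrak{u}_{m+1,m}$ the bracket $[y_1, y_2]$ is a rank-one matrix supported in the top-right $m \times m$ block, vanishing only when $y_1$ or $y_2$ lies in the intersection. Hence any commuting tuple in $\mathfrak{z}_\g'(x)$ lies entirely within a single wing.

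These ingredients give $C'_{r-1}(\mathfrak{z}_\g(x)) = \mathfrak{u}_{m,m}^{r-1}$ in the even case and $C'_{r-1}(\mathfrak{z}_\g(x)) = \mathfrak{u}_{m,m+1}^{r-1} \cup \mathfrak{u}_{m+1,m}^{r-1}$ in the odd case (the conditions $x + \sum c_i y_i \in \overline{{\mathcal O}_x}$ are automatic because the whole expression already lies in the relevant $\mathfrak{u}_{\bullet,\bullet}$). Since every rank-$m$ element of $\mathfrak{u}_{\bullet,\bullet}$ is conjugate to $x$ under the Levi factor of the normalising parabolic, the $G$-saturation satisfies $\overline{G \cdot (x, \mathfrak{u}_{\bullet,\bullet}^{r-1})} = \overline{G \cdot \mathfrak{u}_{\bullet,\bullet}^r}$; the latter is closed because it is the image of the proper morphism $G \times_{P_{\bullet,\bullet}} \mathfrak{u}_{\bullet,\bullet}^r \to \g^r$, where $P_{\bullet,\bullet} = N_G(\mathfrak{u}_{\bullet,\bullet})$ is the maximal parabolic and $G/P_{\bullet,\bullet}$ is projective. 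Irreducibility in the even case and equidimensionality in the odd case are then immediate. The main obstacle will be the direct computation of $\mathfrak{z}_\g'(x)$ for $n = 2m+1$ and the verification that no commuting tuple can mix the two wings.
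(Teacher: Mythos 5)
Your proposal is correct and takes essentially the same route as the paper: the dimension follows from the saturation formula $\dim C'(x)=\dim G-\dim Z_G(x)+\dim C'_{r-1}(\z_\g(x))$ together with Proposition \ref{square_zero}, and the explicit description follows from computing $\z_\g'(x)=\mathfrak{u}_{m,m}$ (resp.\ $\mathfrak{u}_{m,m+1}\cup\mathfrak{u}_{m+1,m}$). You do supply slightly more detail than the paper --- notably the commutator computation showing that a commuting tuple in $\z_\g'(x)$ cannot mix the two wings in the odd case, and the proper-morphism argument for closedness of $G\cdot\mathfrak{u}_{l,n-l}^r$ where the paper instead exhibits explicit closed defining conditions --- but the underlying strategy and key steps coincide.
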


\begin{proof}
The first statement follows immediately from Proposition \ref{square_zero} and the equality
\begin{equation}\label{dimC'(x)}
\dim C'(x)=n^2-\dim \mathfrak{z}_\g(x)+\dim C'_{r-1}(\mathfrak{z}_\g(x)).
\end{equation}
The statements about the maximal rank case can be deduced immediately from the fact that, if $x$ is in the form indicated in the proof of Prop. \ref{square_zero}, then $\z_\g'(x)$ equals ${\mathfrak u}_{m,m}$, resp. ${\mathfrak u}_{m+1,m}\cup{\mathfrak u}_{m,m+1}$ if $n=2m$, resp, $2m+1$. Note that each set $G\cdot {\mathfrak u}_{l,n-l}^r$ is indeed closed, since it is defined by equations $x_ix_j=0$, $1\le i,j\le r$, and the closed conditions that the rows of all $x_i$ span at most an $(n-l)$-dimensional space and their columns span at most an $l$-dimensional space.
\end{proof}

\begin{lemma}\label{rough_dim_commuting_nilpotents}
For all $n\ge 1$ and $r\ge 2$, we have $\dim C_r(\mathcal{N}(\mathfrak{gl}_n))\le (r+1)\frac{n^2-1}{3}$.
\end{lemma}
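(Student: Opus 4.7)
The plan is to use the $\GL_r$-action argument from \S\ref{strategy}: every irreducible component of $C_r(\mathcal{N}(\mathfrak{gl}_n))$ lies in $C'(e)$ for some nilpotent representative $e$, so it suffices to show $\dim C'(e)\le (r+1)(n^2-1)/3$ for each such $e$. Throughout I write $Z=\dim\mathfrak{z}_\mathfrak{g}(e)$ and $A=\dim\mathfrak{z}(e;0)=\sum a_i^2$ (so $A\ge 1$ for $e\ne 0$). First I would dispose of the easy cases: for $e=0$ the claim is trivial, and for $e$ nonzero square-zero it follows from Corollary~\ref{square_zeroC'(x)} together with the elementary inequality $\lfloor n^2/4\rfloor\le (n^2-1)/3$, valid for $n\ge 2$.

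For the remaining case $e^2\ne 0$, my key estimate is $\dim C'(e)\le \dim\mathcal{O}_e+(r-1)\dim\mathfrak{z}'_\mathfrak{g}(e)$, coming from the inclusion $C'_{r-1}(\mathfrak{z}_\mathfrak{g}(e))\subseteq(\mathfrak{z}'_\mathfrak{g}(e))^{r-1}$. I would combine this with the two bounds on $\dim\mathfrak{z}'_\mathfrak{g}(e)$ available from the previous section: $\dim\mathfrak{z}'_\mathfrak{g}(e)\le \dim\mathcal{O}_e/2$ (Lemma~\ref{orbitallem}) and $\dim\mathfrak{z}'_\mathfrak{g}(e)\le Z-A$ (Lemma~\ref{redcentlem}), splitting on which is smaller. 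In Case~A ($\dim\mathcal{O}_e\le 2(Z-A)$) the orbital bound gives $\dim C'(e)\le (r+1)\dim\mathcal{O}_e/2$, and the case hypothesis rearranges to $Z\ge (n^2+2A)/3$, hence $\dim\mathcal{O}_e\le (2n^2-2A)/3\le 2(n^2-1)/3$ using $A\ge 1$, completing this case. In Case~B ($\dim\mathcal{O}_e>2(Z-A)$, equivalently $3Z-n^2\le 2A-1$ as integers), the nilradical bound gives $\dim C'(e)\le n^2+(r-2)Z-(r-1)A$; the required inequality $(r-2)(3Z-n^2)\le 3(r-1)A-(r+1)$ then reduces, via the case hypothesis, to $(r+1)A\ge 3$, which holds for $A\ge 1$ and $r\ge 2$.

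The main obstacle I anticipate is that Lemmas~\ref{redcentlem} and~\ref{orbitallem} require $\mathrm{char}(k)>m$, where $m$ is the largest part of the Jordan partition of $e$. For partitions with $m\ge \mathrm{char}(k)$ — only possible when $\mathrm{char}(k)\le m\le n$ — a separate argument is needed, likely using the crude bound $\dim\mathfrak{z}'_\mathfrak{g}(e)\le Z-\sum a_i$ combined with a direct case check on the (relatively restricted) bad pairs $(\mathrm{char}(k),m)$, or else appealing to the technical estimates of \S\ref{technical}.
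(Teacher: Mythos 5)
You take a genuinely different route from the paper, and the portion of your argument that works is clean. The paper inducts on $m$, the largest Jordan block size, peeling off the size-$m$ blocks at each step and bounding the resulting dimension drop via Krull's principal ideal theorem, with the square-zero case (Corollary~\ref{square_zeroC'(x)}) as the base. You avoid induction entirely by bounding $\dim\mathfrak{z}'_\mathfrak{g}(e)$ directly via Lemma~\ref{orbitallem} and Lemma~\ref{redcentlem} and splitting into two algebraic cases. I have checked the algebra: in Case~A, $\dim C'(e)\le\tfrac{r+1}{2}\dim\mathcal{O}_e$ and the case hypothesis gives $\dim\mathcal{O}_e\le\tfrac{2(n^2-A)}{3}\le\tfrac{2(n^2-1)}{3}$ using $A\ge 1$; in Case~B the required inequality $(r-2)(3Z-n^2)\le 3(r-1)A-(r+1)$ follows from the integer form $3Z-n^2\le 2A-1$ of the case hypothesis together with $(r+1)A\ge 3$. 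Both are correct.

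The gap is the characteristic restriction, and your proposed patch does not close it. Lemmas~\ref{redcentlem} and~\ref{orbitallem} require $\mathrm{char}(k)=0$ or $\mathrm{char}(k)>m$, while the lemma is invoked throughout \S\ref{bounding} under the standing assumption $\mathrm{char}(k)\ne 2,3$ only, and $m$ ranges up to $n$. Thus for every fixed prime $p\ge 5$ there are infinitely many partitions with largest part $m\ge p$ --- this is not a finite list of ``bad pairs'' $(\mathrm{char}(k),m)$, since $m$ and the remaining parts are unbounded --- and for each of them your main argument does not apply. The fallback estimate $\dim\mathfrak{z}'_\mathfrak{g}(e)\le Z-\sum a_i$ (valid in all characteristics, since the $\mathfrak{z}(e;0)$-part of a nilpotent element of $\mathfrak{z}_\mathfrak{g}(e)$ must itself be nilpotent) is too weak on its own. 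Concretely, for $e$ with partition $[5,1^{10}]$ in $\mathfrak{gl}_{15}$ and $r=3$ one has $Z=125$, $\sum a_i=11$ and $\dim\mathcal{O}_e=100$, so the crude estimate yields $\dim C'(e)\le 100+2\cdot 114=328$, while $(r+1)\tfrac{n^2-1}{3}=\tfrac{896}{3}<299$; if $\mathrm{char}(k)=5$ this is a case your argument must handle but cannot. The paper's proof sidesteps this: the nilpotency observation underlying your crude bound (that $y^{(l)}_{mm1}$ is nilpotent) is applied one level of the partition at a time inside the induction, in tandem with the Krull estimate, and the telescoped result is both characteristic-independent and far sharper than a single global application of $Z-\sum a_i$. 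To repair your approach you would need either an induction of a similar flavour, or a characteristic-free upper bound on $\dim\mathfrak{z}'_\mathfrak{g}(e)$ strong enough to substitute for Lemma~\ref{redcentlem} when $m\ge\mathrm{char}(k)$; no such bound appears in \S\ref{technical}.
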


\begin{proof}
Let $x\in \mathcal{N}(\mathfrak{gl}_n)$ be a nilpotent matrix with associated partition $[m^{a_m},\ldots ,2^{a_2},1^{a_1}]$, where $a_1,\ldots ,a_m\ge 0$ and $a_m>0$.
It suffices to show that 
$\dim C'(x)\le (r+1)\frac{n^2-1}{3}$  for each such $x$,
which is equivalent to
\[ \dim C'_{r-1}(\z_\g(x))\le (r+1)\frac{n^2-1}{3}-\dim GL_n+\dim\z_\g(x)=\frac{r-2}{3}n^2+\dim{\mathfrak z}_\g(x)-\frac{r+1}{3}. \]
If $m\le 2$, then the inequality follows from Corollary \ref{square_zeroC'(x)}.
We now assume $m\ge 3$ and proceed by induction on $m$.
Write $x$ in the form indicated in the proof of Lemma \ref{redcentlem}.
Consider a tuple $(y_1,\ldots ,y_{r-1})$ in $C'_{r-1}(\mathfrak{z}_\g(x))$, where each $y_l$, for $1\le l\le r-1$, is of the form
$$y_l=\left[
\begin{array}{cccc}y_{mm}^{(l)}&y_{m,m-1}^{(l)}&\cdots&y_{m1}^{(l)}\\y_{m-1,m}^{(l)}&y_{m-1,m-1}^{(l)}&\cdots&y_{m-1,1}^{(l)}\\\vdots&\vdots&\ddots&\vdots\\y_{1m}^{(l)}&y_{1,m-1}^{(l)}&\cdots&y_{11}^{(l)}
\end{array}
\right]\quad \mathrm{where}\quad y_{ii}^{(l)}=\left[
\begin{array}{cccc}y_{ii1}^{(l)}&y_{ii2}^{(l)}&\cdots&y_{iii}^{(l)}\\0&y_{ii1}^{(l)}&\ddots&\vdots\\\vdots&\ddots&\ddots&y_{ii2}^{(l)}\\0&\cdots&0&y_{ii1}^{(l)}
\end{array}
\right]$$
for some matrices $y_{ii1}^{(l)},\ldots ,y_{iii}^{(l)}$ of appropriate size, and for $i\ne j$ the matrix $y_{ij}^{(l)}$ has the same form with some additional block rows of zeros at the end or some additional block columns of zeros at the beginning (see e.g. \cite[Proposition 14]{NS}).

Let 
\[ x'=\left[
\begin{array}{ccc}\widetilde{J_{m-1}}\\&\ddots\\&&\widetilde{J_1}
\end{array}
\right], \]
and $n'=n-ma_m$.
Let $\g'=\mathfrak{gl}_{n'}$.
{Assume first that $n'=0$, i.e. $n=ma_m$. Then $\dim \mathfrak{z}_{\g}(x)=ma_m^2$. Since $y_l$ is nilpotent for each $l$, $y_{mm1}^{(l)}$ is nilpotent for each $l$. Recall that $\dim\mathcal{N}(\mathfrak{gl}_{a_m})=a_m^2-a_m$. Then we have
$$\dim C_{r-1}'(\mathfrak{z}_\g(x))\le (r-1)(ma_m^2-a_m)\le (r-2)\frac{m}{3}ma_m^2+\dim \mathfrak{z}_\g(x)-(r-1)\le \frac{r-2}{3}n^2+\dim \mathfrak{z}_\g(x)-\frac{r+1}{3}$$
for all $r\ge 2$, $m\ge 3$ and $a_m\ge 1$.

Assume now that $n'>0$.}
By the induction hypothesis we have $$\dim C'_{r-1}({\mathfrak z}_{\g'}(x'))\le \frac{r-2}{3}{n'}^2+\dim{\mathfrak z}_{\g'}(x')-\frac{r+1}{3}.$$
We note that $\dim{\mathfrak z}_\g(x)-\dim{\mathfrak z}_{\g'}(x')=ma_m^2+2n'a_m=a_m(2n-ma_m)$.
Applying Krull's Principal ideal theorem to the coordinate ring of any irreducible component of $C'_{r-1}(\mathfrak{z}_\g(x))$ and its ideal generated by all entries of $y_{mi}^{(l)}$ and $y_{im}^{(l)}$ for $1\le i\le m$, $1\le l\le r-1$, we have
\begin{align*}
\dim C'_{r-1}(\mathfrak{z}_\g(x)) &\le (r-1)(\dim\z_\g(x)-\dim\z_{\g'}(x'))+\dim C'_{r-1}(\z_{\g'}(x'))\\
&\le (r-2)\left(\frac{{n'}^2}{3}+\dim\mathfrak{z}_\g(x)-\dim\mathfrak{z}_{\g'}(x')\right)+\dim{\mathfrak z}_\g(x)-\frac{r+1}{3}\\
&= \frac{r-2}{3}\left( (n-ma_m)^2+3a_m(2n-ma_m)\right)+\dim{\mathfrak z}_\g(x)-\frac{r+1}{3}\\
&=\frac{r-2}{3}n^2+\dim{\mathfrak z}_\g(x)-\frac{r-2}{3}(m-3)a_m(2n-ma_m)-\frac{r+1}{3}\\
&\le \frac{r-2}{3}n^2+\dim\z_\g(x)-\frac{r+1}{3}
\end{align*}
for all $m\geq 3$, as required.
\end{proof}

\begin{remark}\label{inductioninequality}
In the block form indicated in the above proof, the component of $y_l$ in the reductive part of the centralizer (see \S \ref{gradingsec}) is given by the submatrices $y_{ii1}^{(l)}$.
Since $y_l$ is nilpotent then in fact $y_{mm1}^{(l)}$ is nilpotent for $1\le l\le r-1$.
{Since} $\dim\mathcal{N}(\mathfrak{gl}_{a_m})=a_m^2-a_m$,
we have the stronger inequality: $$\dim C'_{r-1}({\mathfrak z}_\g(x))-\dim C'_{r-1}({\mathfrak z}_{\g'}(x'))\le (r-1)(\dim{\mathfrak z}_\g(x)-\dim{\mathfrak z}_{\g'}(x'))-(r-1)a_m.$$
(Although we 
used this fact in the proof only in the case $n'=0$, it will be needed to establish the stronger inequality in our main Theorem \ref{precise_dim_commuting_nilpotents}.)
Note that if the characteristic of the ground field is zero or greater than $m$ then in fact $y_{mm1}^{(l)}=0$ by Lemma \ref{redcentlem}.
\end{remark}

\begin{corollary}\label{rough_dim_commuting_matrices}
For all $n\ge 3$ and $r\ge 5$, we have $\dim C_r(\mathfrak{gl}_n)\le (r+1)\frac{n^2-1}{3}+r$.
\end{corollary}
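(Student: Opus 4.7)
The plan is to stratify $C_r(\mathfrak{gl}_n)$ by the $\GL_n$-conjugacy type of the semisimple parts of a commuting tuple and reduce the dimension estimate to Lemma~\ref{rough_dim_commuting_nilpotents} applied to the associated Levi subalgebras.

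Given $(x_1,\ldots,x_r)\in C_r(\mathfrak{gl}_n)$, write $x_i=s_i+n_i$ for the Jordan--Chevalley decomposition. Since $s_i$ and $n_i$ are polynomials in $x_i$, the elements $s_1,\ldots,s_r,n_1,\ldots,n_r$ pairwise commute. The commuting semisimple tuple $(s_1,\ldots,s_r)$ is simultaneously diagonalizable, yielding a joint eigenspace decomposition $k^n=V_1\oplus\cdots\oplus V_l$ of some type $\mu=(\mu_1,\ldots,\mu_l)$ (a composition of $n$, with $\mu_i=\dim V_i$), and the nilpotent parts $n_j$ then lie in the Levi subalgebra $\prod_{i=1}^l\mathfrak{gl}(V_i)$ and commute pairwise. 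Stratifying $C_r(\mathfrak{gl}_n)$ by type $\mu$, each stratum is parametrized by (i) the choice of decomposition of $k^n$ of type $\mu$ (of dimension $n^2-\sum_i\mu_i^2$, the dimension of the $\GL_n$-orbit of a semisimple element of type $\mu$), (ii) the $rl$ eigenvalues of the $s_j$'s on the various $V_i$'s, and (iii) a commuting tuple of nilpotent elements in $\prod_i\mathfrak{gl}(V_i)$. Applying Lemma~\ref{rough_dim_commuting_nilpotents} to each factor $\mathfrak{gl}_{\mu_i}$ yields
\begin{equation*}
\dim C_r(\mathfrak{gl}_n)\le\max_{\mu}\left[n^2-\sum_i\mu_i^2+rl+(r+1)\sum_i\tfrac{\mu_i^2-1}{3}\right]=\max_\mu\left[n^2+\tfrac{r-2}{3}\sum_i\mu_i^2+\tfrac{2r-1}{3}l\right].
\end{equation*}

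It then remains to show that for $r\ge 5$ and $n\ge 3$ this maximum equals $(r+1)(n^2-1)/3+r$, attained at $\mu=(n)$. For fixed $l$, the sum $\sum_i\mu_i^2$ is a convex function on the simplex $\{\mu:\sum_i\mu_i=n,\mu_i\ge 1\}$ and is therefore maximized at the extreme composition $(n-l+1,1,\ldots,1)$. Substituting and viewing the result as a function of $l\in\{1,\ldots,n\}$, one checks that it is again convex (the coefficient of $l^2$ is $\tfrac{r-2}{3}>0$), so attains its maximum at an endpoint: the case $l=1$ gives exactly $(r+1)(n^2-1)/3+r$, while the case $l=n$ gives $n^2+(r-1)n$. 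The required dominance of the former then reduces to the inequality $(r-2)n^2-3(r-1)n+(2r-1)\ge 0$, which factors as $3(n-1)(n-3)\ge 0$ when $r=5$ and follows for $r\ge 6$ upon computing that the larger root is $\tfrac{2r-1}{r-2}\le 3$. No conceptual difficulty beyond the Jordan decomposition and Lemma~\ref{rough_dim_commuting_nilpotents} is needed; the only real work is the (routine but slightly fiddly) convexity-based optimization.
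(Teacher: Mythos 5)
Your proof is correct, and the overall strategy is the same as the paper's: stratify commuting tuples by the structure of the semisimple parts, reduce to nilpotent commuting tuples in the Levi blocks via Lemma~\ref{rough_dim_commuting_nilpotents}, and optimize the resulting dimension estimate. The resulting inequality
\[
n^2+\tfrac{r-2}{3}\textstyle\sum_i\mu_i^2+\tfrac{2r-1}{3}l\le (r+1)\tfrac{n^2-1}{3}+r
\]
is exactly what the paper's proof reduces to (in the notation $\lambda,m$ rather than $\mu,l$). There are, however, two genuine differences. First, you stratify directly by the joint eigenspace decomposition of $(s_1,\ldots,s_r)$, which sidesteps the paper's appeal to the $\GL_r$ dot action; the paper uses that action (on an irreducible component $\mathcal C$) to argue that a generic tuple in $\mathcal C$ has all $x_i$ sharing a common eigenspace decomposition, whereas you simply allow the finer joint decomposition and absorb the possible overcount into the $\max$ over types. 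Second, your final optimization is a clean two-step convexity argument (maximize $\sum\mu_i^2$ over the simplex at fixed $l$, then observe the resulting function of $l$ is convex, so the maximum sits at $l=1$ or $l=n$, and verify the quadratic inequality $(r-2)n^2-3(r-1)n+(2r-1)\ge 0$ for $n\ge 3$, $r\ge 5$), where the paper instead rearranges to the inequality $\tfrac{2r-1}{3}(m-1)\le\tfrac{2(r-2)}{3}\sum_{i<j}\lambda_i\lambda_j$ and does a case split on $m=1$, $m=2$, $m\ge 3$. Both yield the same bound, attained at $\mu=(n)$ (tied with $\mu=(1^n)$ precisely when $n=3$, $r=5$); yours is arguably tidier, and the factorization $(r-2)(n-1)\bigl(n-\tfrac{2r-1}{r-2}\bigr)$ of your quadratic shows the sharpness cleanly.
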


\begin{proof}
For the duration of this proof, denote ${\mathcal N}(\mathfrak{gl}_a)$ by ${\mathcal N}_a$.
Let $r\geq 5$, and let $\mathcal{C}$ be an irreducible component of $C_r(\mathfrak{gl}_n)$.
For each partition $\lambda=[\lambda_1\leq\ldots\leq\lambda_m]$ of $n$ let $X_\lambda$ denote the set of elements of $\mathfrak{gl}_n$ having $m$ distinct eigenvalues of multiplicities $\lambda_1,\ldots ,\lambda_m$.
The subset $X_\lambda$ is Zariski constructible and $\GL_n$-stable, since it is the image of a morphism $\GL_n\times Y\rightarrow{\mathfrak{gl}}_n$, where $Y$ is the open subset of $m$-tuples $((a_1,x_1),\ldots ,(a_m,x_m))\in\prod_{i=1}^m (k\times{\mathcal N}_{\lambda_i})$ such that the $a_i$ are distinct.
As there are finitely many of the subsets $X_\lambda$, and since ${\mathcal C}$ is stable under the dot action of $\GL_r$ (see the proof of Lemma \ref{y_iz_j=y_jz_i}), it follows that for some $\lambda$, $X_\lambda^r$ contains a non-empty open subset ${\mathcal U}$ of ${\mathcal C}$.
Here $m$ is the maximum number of distinct eigenvalues of an element of an $r$-tuple belonging to ${\mathcal C}$.
By the dot action, this is also the maximum number of distinct eigenvalues of any linear combination of the elements of an $r$-tuple belonging to ${\mathcal C}$.
Thus if $(x_1,\ldots ,x_r)\in{\mathcal U}$ then the eigenspace decomposition of $k^n$ for the semisimple part of $x_1$ must be the same as for the semisimple parts of $x_2,\ldots, x_r$.
We therefore have \[ \mathcal{C}=\overline{\mathcal{U}}\subseteq \overline{GL_n\cdot (C_r(\mathcal{N}_{\lambda_1}+kI_{\lambda_1})\times \cdots \times C_r(\mathcal{N}_{\lambda_m}+kI_{\lambda_m}))}.\] 
Thus
\begin{equation}\label{ineq}
\dim \mathcal{C} \le n^2-\dim N_{\GL_n}(\mathfrak{gl}_{\lambda_1}\times \cdots \times \mathfrak{gl}_{\lambda_m})+\sum _{i=1}^m\left(\dim C_r(\mathcal{N}_{\lambda_i})+r\right).\end{equation}

{By Lemma \ref{rough_dim_commuting_nilpotents} we therefore have to show that
$$n^2-\sum_{i=1}^m\lambda_i^2+\sum_{i=1}^m(r+1)\frac{\lambda_i^2-1}{3}+(m-1)r\le (r+1)\frac{n^2-1}{3}.$$
Rearranging and using $n=\sum_{i=1}^m\lambda_i$ we get an equivalent inequality
\begin{equation}\label{inequality_to_show}
\frac{2r-1}{3}(m-1)\le \frac{2(r-2)}{3}\sum_{1\le i<j\le m}\lambda_i\lambda_j.
\end{equation}
The right hand-side of the above inequality is at least $\frac{r-2}{3}m(m-1)$, so it suffices to show $(2r-1)(m-1)\le (r-2)m(m-1)$. If $m\ge 3$ and $r\ge 5$, then $(r-2)m-(2r-1)=(m-2)r+1-2m\ge 5(m-2)+1-2m=3(m-3)\ge 0$. The inequality (\ref{inequality_to_show}) therefore holds for $r\ge 5$ if $m=1$ or $m\ge 3$. On the other hand, if $m=2$, then it is equivalent to $2r-1\le 2(r-2)\lambda_1\lambda_2$, which holds for $r\ge 4$, as $n\ge 3$ and consequently $\lambda_1\lambda_2\ge 2$.}

\end{proof}

\begin{remark}\label{glbrem}
For the purposes of the proof of Thm. \ref{precise_dim_commuting_nilpotents} we also need to use the fact \cite{Gu:1992} that $\dim C_r(\mathfrak{gl}_n)=0$, resp. $r$, $2(r+1)$, $3(r+2)$ for $n = 0$, resp. $1$, $2$, $3$.
\end{remark}

In order to establish our main theorem on the dimension of $C_r({\mathcal N}(\mathfrak{gl}_n))$, we consider in more detail the case of an element with all Jordan blocks of size $\leq 4$.

\begin{lemma}\label{GLblocksize4}
Let $r\geq 7$ and suppose $x\in{\mathcal N}(\mathfrak{gl}_n)$ has associated partition $[4^a,3^b,2^c,1^d]$ where at least one of $a,b$ is non-zero.
Then $\dim C'(x) < (r+1)\lfloor\frac{n^2}{4}\rfloor$ unless $(a,b,c,d)=(0,1,0,0)$.
In the case $(a,b,c,d)=(0,1,0,0)$ we have the weaker inequality $\dim C'(x)\leq (r+1)\frac{n^2}{4}$.
\end{lemma}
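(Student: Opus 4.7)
The plan is to bound $\dim C'(x)$ via the formula
$$\dim C'(x)=n^2-\dim\mathfrak{z}_\g(x)+\dim C'_{r-1}(\mathfrak{z}_\g(x)),$$
the main work being an upper estimate on $\dim C'_{r-1}(\mathfrak{z}_\g(x))$ via the block structure of the centralizer combined with the technical estimates of Section~\ref{technical}.

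First, I would place $x$ in the canonical form $\mathrm{diag}(\widetilde{J_4},\widetilde{J_3},\widetilde{J_2},\widetilde{J_1})$ of Lemma~\ref{redcentlem}. Since $\mathrm{char}\,k\neq 2,3$, we have $\mathrm{char}\,k=0$ or $\mathrm{char}\,k\geq 5$, so Lemma~\ref{redcentlem} applies, and each $y_l$ in an $(r-1)$-tuple from $C'_{r-1}(\mathfrak{z}_\g(x))$ has zero component in the reductive part $\mathfrak{z}(x;0)\cong\mathfrak{gl}_a\oplus\mathfrak{gl}_b\oplus\mathfrak{gl}_c\oplus\mathfrak{gl}_d$. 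I then decompose each $y_l$ into block components $y_l^{ij}$ indexed by pairs of block-sizes $i,j\in\{4,3,2,1\}$, each with its internal Toeplitz-like form.

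Next, I would extract from these components the data needed to invoke Lemma~\ref{y,z,w,v,u}. In the case $a\geq 1$, the leading (degree-$1$) coefficient in the $(4,4)$-sub-block supplies matrices $u_l\in\mathfrak{gl}_a$; the couplings of the $4$-blocks to the $3$- and $2$-blocks supply matrices $y_l,z_l,w_l,v_l$ of the appropriate sizes; and the relations $[y_i,y_j]=0$ together with the closure/nilpotence condition project to exactly the defining equations of $\mathcal{Z}_{r-1,\alpha,\beta,a}$ for suitable $\alpha,\beta$ determined by $b,c,d$. A fibration/Krull-dimension argument of the kind used in Lemma~\ref{rough_dim_commuting_nilpotents} then bounds $\dim C'_{r-1}(\mathfrak{z}_\g(x))$ by the dimension of this $\mathcal{Z}$-variety plus the contribution from the remaining entries of the $y_l$, controlled by Lemma~\ref{y_iz_j=y_jz_i} or by direct counting. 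In the sub-case $a=0$, $b\geq 1$, the $3$-blocks play an analogous role and Lemma~\ref{y_iz_j=y_jz_i} suffices in place of Lemma~\ref{y,z,w,v,u}.

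Substituting into the displayed formula, using $\dim\mathfrak{z}_\g(x)=(a+b+c+d)^2+(a+b+c)^2+(a+b)^2+a^2$ and $n=4a+3b+2c+d$, and comparing with $(r+1)\lfloor n^2/4\rfloor$, the resulting polynomial inequality (using $r\geq 7$) is strict for every $(a,b,c,d)$ with $a+b\geq 1$ except $(0,1,0,0)$. In that exceptional case, $x$ is the regular nilpotent in $\mathfrak{gl}_3$ and $\mathfrak{z}_\g'(x)$ is the $2$-dimensional commutative subspace of strictly upper triangular Toeplitz matrices; a direct computation gives $\dim C'_{r-1}(\mathfrak{z}_\g(x))=2(r-1)$, so $\dim C'(x)=2r+4\leq 9(r+1)/4=(r+1)n^2/4$, with equality at $r=7$.

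The main obstacle will be the identification step in the third paragraph: matching sub-blocks of the $y_l^{ij}$ with the variables $y,z,w,v,u$ of Lemma~\ref{y,z,w,v,u}, verifying that the remaining commutation conditions exactly cut out $\mathcal{Z}_{r-1,\alpha,\beta,a}$, and then carefully accounting for all remaining Toeplitz coefficients in the block entries not captured by $\mathcal{Z}$. This bookkeeping is delicate, and is likely the source of the deferred computations mentioned in the introduction as being relegated to the appendix.
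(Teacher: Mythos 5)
Your high-level plan — write $x$ in block-Toeplitz form, use Lemma~\ref{redcentlem} to kill the reductive-part components, peel off sub-blocks via a projection/fiber-dimension pipeline, identify residual data with the auxiliary varieties of Section~\ref{technical}, and directly handle $(0,1,0,0)$ — does match the paper's overall strategy, and your computation in the exceptional case $(0,1,0,0)$ (giving $\dim C'(x)=2r+4\leq\tfrac{9(r+1)}{4}$, with equality at $r=7$) is correct.

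However, the central identification in your third paragraph is wrong. You propose to take the $u_l$'s of Lemma~\ref{y,z,w,v,u} to be the degree-$1$ Toeplitz coefficient $A_2^{(l)}\in\mathfrak{gl}_a$ of the $(4,4)$-block, with couplings of the $4$-blocks to the $3$- and $2$-blocks supplying $y,z,w,v$, yielding $\mathcal Z_{r-1,\alpha,\beta,a}$. The paper does something structurally different: the $u_l$'s are the matrices $L_2^{(l)}\in\mathfrak{gl}_c$ coming from the $(2,2)$-block, and the other four slots are filled by the couplings of the $2$-blocks to the $4$- and $3$-blocks, $C_1^{(l)}\in M_{a\times c}$, $G_1^{(l)}\in M_{b\times c}$, $J_1^{(l)}\in M_{c\times a}$, $K_1^{(l)}\in M_{c\times b}$, producing $\mathcal Z_{r-1,a,b,c}$ — note the last index is $c$, not $a$. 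This matters for two reasons. First, after the earlier projections kill $A_2,\ldots,E_3$ etc., the surviving commutation relations in the relevant blocks are exactly $C_1^{(i)}L_2^{(j)}=C_1^{(j)}L_2^{(i)}$, $G_1^{(i)}L_2^{(j)}=G_1^{(j)}L_2^{(i)}$, $L_2^{(i)}J_1^{(j)}=L_2^{(j)}J_1^{(i)}$, $L_2^{(i)}K_1^{(j)}=L_2^{(j)}K_1^{(i)}$, $C_1^{(i)}J_1^{(j)}=C_1^{(j)}J_1^{(i)}$, and (together with a rank condition extracted from $(x+\lambda y_i+\mu y_j)^4=0$) $G_1^{(i)}K_1^{(j)}=0$ — these are precisely the defining equations of $\mathcal Z_{r-1,a,b,c}$. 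By contrast, the relations involving $A_2$ are never this clean: for instance $[y_i,y_j]_{5,3}=0$ gives $E_1^{(i)}A_2^{(j)}+F_2^{(i)}E_1^{(j)}+G_1^{(i)}J_1^{(j)}=E_1^{(j)}A_2^{(i)}+F_2^{(j)}E_1^{(i)}+G_1^{(j)}J_1^{(i)}$, and $[y_i,y_j]_{1,3}=0$ mixes $A_2^{(i)}A_2^{(j)}$ with $B_1,E_2,B_2,E_1,C_1,J_1$ cross-terms, so the $A_2$'s do not satisfy the $\mathcal Z$-equations on the nose; indeed the paper treats the $A_2$'s as free entries in its first projection $\pi_1$, contributing their full dimension. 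Second, the quantitative bound in Lemma~\ref{y,z,w,v,u} is highly asymmetric in $(a,b,c)$ (terms like $\tfrac{a^2}{11}$, $(r-1)c^2$, $(r-1)\tfrac{c^2}{2}$), so swapping the role of $a$ and $c$ changes the estimate in a way that the appendix's polynomial verification would no longer cover. Your statement that in the sub-case $a=0$ Lemma~\ref{y_iz_j=y_jz_i} suffices in place of Lemma~\ref{y,z,w,v,u} is also not what happens: the appendix explicitly uses $\mathcal Z_{r-1,0,b,2}$ there. In short, the route is right in outline but the key step — which block furnishes the $u$'s and which technical lemma governs it — is misidentified, and the commutation relations do not support the identification you propose.
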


\begin{proof}
We wish to show
\begin{align}\label{need to show}
\dim C'_{r-1}(\z_\g(x)) < (r+1)\left\lfloor \frac{n^2}{4}\right\rfloor-n^2+\dim\z_\g(x)
\end{align}
with the exception of the case $(a,b,c,d)=(0,1,0,0)$.
We may assume $x$ is written in the form indicated in the proof of Lemma \ref{redcentlem}.
Consider a tuple $(y_1,\ldots ,y_{r-1})$ in $C'_{r-1}(\z_\g(x))$. Then for $1\le i\le r-1$, each $y_i$ is of the form
\begin{equation}\label{glmat}
\left[
\begin{array}{cccccccccc}A_1^{(i)}&A_2^{(i)}&A_3^{(i)}&A_4^{(i)}&B_1^{(i)}&B_2^{(i)}&B_3^{(i)}&C_1^{(i)}&C_2^{(i)}&D^{(i)}\\0&A_1^{(i)}&A_2^{(i)}&A_3^{(i)}&0&B_1^{(i)}&B_2^{(i)}&0&C_1^{(i)}&0\\0&0&A_1^{(i)}&A_2^{(i)}&0&0&B_1^{(i)}&0&0&0\\0&0&0&A_1^{(i)}&0&0&0&0&0&0\\0&E_1^{(i)}&E_2^{(i)}&E_3^{(i)}&F_1^{(i)}&F_2^{(i)}&F_3^{(i)}&G_1^{(i)}&G_2^{(i)}&H^{(i)}\\0&0&E_1^{(i)}&E_2^{(i)}&0&F_1^{(i)}&F_2^{(i)}&0&G_1^{(i)}&0\\0&0&0&E_1^{(i)}&0&0&F_1^{(i)}&0&0&0\\0&0&J_1^{(i)}&J_2^{(i)}&0&K_1^{(i)}&K_2^{(i)}&L_1^{(i)}&L_2^{(i)}&M^{(i)}\\0&0&0&J_1^{(i)}&0&0&K_1^{(i)}&0&L_1^{(i)}&0\\0&0&0&N^{(i)}&0&0&P^{(i)}&0&Q^{(i)}&R ^{(i)}
\end{array}
\right].
\end{equation}
By assumption the characteristic is either zero or greater than 4, hence by Lemma \ref{redcentlem} we have $A_1^{(i)}=0$, $F_1^{(i)}=0$, $L_1^{(i)}=0$ and $R^{(i)}=0$.

From the rank condition for $C'_{r-1}(\z_\g(x))$, we need 
\[ (x+\lambda y_i+\mu y_j)^4=0\]
for all $\lambda ,\mu \in k$ and $1\le i,j\le r-1$. Since $\chr k\not \in\{2,3\}$, we have $x^2y_iy_j=0$ for $1\le i,j\le r-1$, which implies that $B_1^{(i)}E_1^{(j)}=0$.
Hence $(B_1^{(1)},\ldots ,B_1^{(r-1)},E_1^{(1)},\ldots ,E_1^{(r-1)})\in{\mathcal Y}_{r-1,a,b}$ (see Lemma \ref{y_iz_j=0}).
In what follows we consider various projections, applying the theorem on dimensions of fibres of morphisms to produce the desired upper bound on the dimension.
First we let 
\[ \pi _1\colon C'_{r-1}(\mathfrak{z}_\g(x))\to \mathcal{Y}_{r-1,a,b}\times \mathfrak{gl}_a^{3(r-1)}\times M_{a\times b}^{2(r-1)}\times M_{b\times a}^{2(r-1)}\times M_{a\times c}^{r-1}\times M_{c\times a}^{r-1}\]
be the projection mapping each tuple $(y_1,\ldots ,y_{r-1})$ to a collection of tuples of $B_1$'s, $E_1$'s, $A_2$'s, $A_3$'s, $A_4$'s, $B_2$'s, $B_3$'s, $E_2$'s, $E_3$'s, $C_2$'s and $J_2$'s.
Clearly $C'_{r-1}(\mathfrak{z}_\g(x))$ is a conical subset of ${\mathfrak z}_\g(x)^{r-1}$, hence each irreducible component contains $(0,\ldots ,0)$.
The theorem on dimensions of fibres therefore implies
\begin{align}\label{inequality 1}
\dim C'_{r-1}(\mathfrak{z}_\g(x)) 
\le (r-1)a(3a+4b+2c)+\dim {\mathcal Y}_{r-1,a,b} +\dim \pi _1^{-1}(0,\ldots ,0).
\end{align}

We observe now that a tuple $(y_1,\ldots ,y_{r-1})\in \pi _1^{-1}(0,\ldots ,0)$ satisfies $D^{(i)}N^{(j)}=D^{(j)}N^{(i)}$ for all $i,j\le r-1$.
Let ${\mathcal W}_{r-1,a,d}$ be the variety defined in Lemma \ref{y_iz_j=y_jz_i}, and let $\pi_2\colon \pi^{-1}(0,\ldots ,0)\to{\mathcal W}_{r-1,a,d}$ be the projection sending $(y_1,\ldots ,y_{r-1})$ to the collection of $D$'s and $N$'s.
Then we have 
\begin{align}\label{inequality 2}
\dim \pi_1^{-1}(0,\ldots ,0)\leq \dim \pi_2^{-1}(0,\ldots ,0)+\dim {\mathcal W}_{r-1,a,d}.
\end{align}

Now suppose $(y_1,\ldots ,y_{r-1})\in \pi _2^{-1}(0,\ldots ,0)$.
Let 
\[\pi _3\colon \pi _2^{-1}(0,\ldots ,0)\to \mathfrak{gl}_b^{r-1}\times M_{b\times c}^{r-1}\times M_{c\times b}^{r-1}\times M_{b\times d}^{r-1}\times M_{d\times b}^{r-1}\]
be the projection sending $(y_1,\ldots ,y_{r-1})$ to a collection of tuples of $F_3$'s, $G_2$'s, $K_2$'s, $H$'s, and $P$'s.
Once more, the theorem on dimensions of fibres gives us
\begin{align}\label{inequality 3}
\dim \pi _2^{-1}(0,\ldots ,0)\le (r-1)b(b+2c+2d)+\dim \pi _3^{-1}(0,\ldots ,0).
\end{align}

Next, consider $(y_1,\ldots ,y_{r-1})\in \pi _3^{-1}(0,\ldots,0)$. Then $[y_i,y_j]=0$ implying $[F_2^{(i)},F_2^{(j)}]=0$ for all $1\le i,j\le r-1$.
By considering the map 
$$\pi _4\colon \pi _3^{-1}(0,\ldots ,0) \to C_{r-1}(\mathfrak{gl}_b), \quad
(y_1,\ldots ,y_{r-1})\mapsto(F_2^{(1)},\ldots ,F_2^{(r-1)}),
$$
we obtain
\begin{align}\label{inequality 4}
\dim \pi _3^{-1}(0,\ldots ,0)\le \dim \pi _4^{-1}(0,\ldots ,0)+\dim C_{r-1}(\mathfrak{gl}_b).
\end{align}
Suppose $(y_1,\ldots ,y_{r-1})\in \pi _4^{-1}(0,\ldots ,0)$.
Then the commutativity relations imply that for all $1\le i,j\le r-1$ we have
\begin{equation}\label{Wconds}
\begin{array}{ccc}
C_1^{(i)}L_2^{(j)}=C_1^{(j)}L_2^{(i)} ,& G_1^{(i)}L_2^{(j)}=G_1^{(j)}L_2^{(i)}, & L_2^{(i)}J_1^{(j)}=L_2^{(j)}J_1^{(i)},\\
L_2^{(i)}K_1^{(j)}=L_2^{(j)}K_1^{(i)}, & C_1^{(i)}J_1^{(j)}=C_1^{(j)}J_1^{(i)}, & G_1^{(i)}K_1^{(j)}=G_1^{(j)}K_1^{(i)}.
\end{array}
\end{equation}
Moreover, the rank condition $(y_1,\ldots ,y_{r-1})\in C'_{r-1}({\mathfrak z}_\g(x))$ implies that $G_1^{(i)}K_1^{(j)}=0$. (Note that the constant term of the fourth block in the first row of the matrix $(x+\lambda y_i+\mu y_j)^3$ is $I_a$, the quadratic term of the seventh block in the fifth row of the same matrix is $3(\lambda G_1^{(i)}+\mu G_1^{(j)})(\lambda K_1^{(i)}+\mu K_1^{(j)})$, there are no linear terms in that matrix, and that $\chr k\not \in \{2,3\}$). 
In other words, the set of tuples of $C_1$'s, ${G}_1$'s, ${J}_1$'s, $K_1$'s, and $L_2$'s belongs to the set ${\mathcal Z}_{r-1,a,b,c}$ defined in Lemma \ref{y,z,w,v,u}.
Let $\pi _5$ be the projection from $\pi _4^{-1}(0,\ldots ,0)$ to $\mathcal{Z}_{r-1,a,b,c}$.
Then
\begin{align}\label{inequality 5}
\dim \pi _4^{-1}(0,\ldots ,0)\le \dim \mathcal{Z}_{r-1,a,b,c}+\dim \pi _5^{-1}(0,\ldots ,0).
\end{align}   
On the other hand, $\pi _5^{-1}(0,\ldots ,0)$ is isomorphic to the variety of all $(r-1)$-tuples
$$\left(\left[
\begin{array}{ccc} 0 &0&M^{(1)}\\0& 0 &0\\0&Q^{(1)}& 0
\end{array}
\right],\ldots ,\left[
\begin{array}{ccc} 0 &0&M^{(r-1)}\\0&0&0\\0&Q^{(r-1)}&0
\end{array}
\right]\right)$$
belonging to $C'_{r-1}\left(\mathfrak{z}_{\mathfrak{gl}_{2c+d}}\left(\left[
\begin{array}{ccc}0&I_c&0\\0&0&0\\0&0&0
\end{array}
\right]\right)\right)$.
The rank conditions imply $ M^{(i)}Q^{(i)}=0$  for each $1\le i\le r-1$ and that the determinant of every $(c+1)\times (c+1)$ submatrix of $\begin{bmatrix} \xi I_c & M^{(i)} \\ Q^{(i)} & 0 \end{bmatrix}$ is zero. Considering the possible coefficients of $\xi ^{c-1}$ we obtain $Q^{(i)}M^{(i)}=0$ for each $1\le i\le r-1$ too.
Lemma \ref{yz=0,zy=0} then implies 
\begin{align}\label{inequality 6}
\dim \pi _5^{-1}(0,\ldots ,0){\le}(r-1)cd.
\end{align} 
Combining \eqref{inequality 1}-\eqref{inequality 6}, we have
\begin{align*}
\dim C'_{r-1}(\z_\g(x))&\le (r-1)(a(3a+4b+2c)+b(b+2c+2d)+cd)+\dim{\mathcal Y}_{r-1,a,b}\\&+\dim {\mathcal W}_{r-1,a,d}+\dim C_{r-1}(\mathfrak{gl}_b)+\dim\mathcal Z_{r-1,a,b,c}.
\end{align*}

We therefore (using Cor. \ref{rough_dim_commuting_matrices} and \cite{Gu:1992} (see Rk. \ref{glbrem}) for $C_{r-1}(\mathfrak{gl}_b)$)  obtain various upper bounds for $\dim C'_{r-1}(\z_\g(x))$ for various ranges of values of $a,b,c,d$.
We can then verify computationally that in almost all cases the right-hand side is less than the desired amount in \eqref{need to show}.
The exceptions can be dealt with by a more direct and precise determination of $\dim C'(x)$.
The details are in Appendix \ref{GLApp}.
\end{proof}

This completes the preparation we require for our main theorem.

\begin{theorem}\label{precise_dim_commuting_nilpotents}
Assume ${\rm char}\, k\neq 2,3$.
For each $n\ge 4$ and $r\ge 7$, we have 
\[ \dim C_r(\mathcal{N}(\mathfrak{gl}_n))=(r+1)\left\lfloor \frac{n^2}{4}\right\rfloor \]
and the irreducible component(s) of maximal dimension is, resp. are: $G\cdot {\mathfrak u}_{m,m}^r$ if $n=2m$, resp. $G\cdot{\mathfrak u}_{m,m+1}^r$, $G\cdot{\mathfrak u}_{m+1,m}^r$ if $n=2m+1$.
\end{theorem}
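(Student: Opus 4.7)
The proof combines a direct lower bound with an upper bound argument following the general strategy of \S\ref{strategy}. The lower bound $\dim C_r(\mathcal{N}(\mathfrak{gl}_n)) \geq (r+1)\lfloor n^2/4 \rfloor$ is immediate from Corollary~\ref{square_zeroC'(x)}, applied to a square-zero matrix of maximal rank $\lfloor n/2 \rfloor$; this same corollary identifies $G \cdot \mathfrak{u}_{m,m}^r$ (for $n = 2m$), or the pair $G \cdot \mathfrak{u}_{m+1,m}^r$ and $G \cdot \mathfrak{u}_{m,m+1}^r$ (for $n = 2m+1$), as equidimensional closed subvarieties of $C_r(\mathcal{N}(\mathfrak{gl}_n))$ of the expected dimension, and Remark~\ref{CFPremark} rules out any other candidates for the maximal commutative nil subalgebras.

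For the matching upper bound, my plan is to show $\dim C'(x) \leq (r+1) \lfloor n^2/4 \rfloor$ for every nilpotent $x \in \mathcal{N}(\mathfrak{gl}_n)$, with strict inequality whenever $x$ is not square-zero of maximal rank. By the general strategy this suffices to pin down both $\dim C_r(\mathcal{N}(\mathfrak{gl}_n))$ and its components of maximal dimension. The argument proceeds by induction on $n \geq 4$, splitting on the largest Jordan block size $m$ in the partition $[m^{a_m},\ldots, 1^{a_1}]$ of $x$. When $m \leq 2$, Corollary~\ref{square_zeroC'(x)} gives $\dim C'(x) = (r+1) s(s+t) \leq (r+1)\lfloor n^2/4 \rfloor$ with equality iff $s = \lfloor n/2 \rfloor$. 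When $m \in \{3,4\}$, Lemma~\ref{GLblocksize4} applies directly; the exceptional tuple $(0,1,0,0)$ of that lemma corresponds to $n = 3$ and is excluded by hypothesis.

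The inductive step treats $m \geq 5$. Let $x' \in \mathfrak{gl}_{n'}$ with $n' = n - m a_m$ be the nilpotent matrix obtained from $x$ by deleting its $a_m$ Jordan blocks of largest size. Combining the inductive hypothesis $\dim C'(x') \leq (r+1) \lfloor (n')^2/4 \rfloor$ with the reduction bound of Remark~\ref{inductioninequality}, together with the identities $n^2 - (n')^2 = m a_m(2n - m a_m)$ and $\dim \mathfrak{z}_\g(x) - \dim \mathfrak{z}_{\g'}(x') = a_m(2n - m a_m)$, reduces the required estimate to the arithmetic inequality
\[
(r+1)\bigl(\lfloor n^2/4\rfloor - \lfloor (n')^2/4\rfloor\bigr) \geq a_m(2n - m a_m)(m + r - 2) - (r-1) a_m.
\]
The leading behaviour $(r+1) m /4 \geq m + r - 2$ at the critical value $m = 5$ is equivalent to $r \geq 7$, which is exactly the hypothesis on $r$; the residual term $-(r-1) a_m$ supplied by Remark~\ref{inductioninequality} (itself a consequence of the nilpotency of the reductive-part components $y_{mm1}^{(l)}$) is what ensures strict inequality in this boundary case. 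Small values $n' \in \{0,1,2,3\}$, for which the inductive hypothesis is not directly available, must be handled separately using Remark~\ref{glbrem} and explicit calculations (e.g.\ $\dim C'(x') = 2(r+2)$ when $x'$ has partition $[3]$).

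The main obstacle is this last piece of bookkeeping: verifying the arithmetic inequality with the floor function handled carefully (so that parity mismatches between $n$ and $n'$ do not create loopholes), ruling out equality in every subcase other than square-zero matrices of maximal rank, and treating the small-$n'$ base cases. Once this is in place, the identification of the irreducible components of maximal dimension follows from Corollary~\ref{square_zeroC'(x)} and Remark~\ref{CFPremark}.
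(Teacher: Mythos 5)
Your overall strategy matches the paper's: lower bound via Corollary~\ref{square_zeroC'(x)}, upper bound by showing $\dim C'(x)$ is small for every nilpotent $x$, base case $m\leq 4$ from Lemma~\ref{GLblocksize4}, inductive step via Remark~\ref{inductioninequality} for $m\geq 5$. However, there is a real flaw in the form of your inductive invariant. You propose to carry forward the hypothesis $\dim C'(x')\leq (r+1)\lfloor (n')^2/4\rfloor$, but this is actually \emph{false} when $x'$ has partition $[3]$ in $\mathfrak{gl}_3$: there one computes $\dim\mathfrak{z}'_{\g'}(x')=2$ and $C'_{r-1}(\mathfrak{z}_{\g'}(x'))$ has dimension exactly $2(r-1)$, so $\dim C'(x')=6+2(r-1)=2(r+2)$, which exceeds $(r+1)\lfloor 9/4\rfloor=2(r+1)$. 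You flag that small $n'$ needs "separate handling" and even record the correct value $2(r+2)$, but you present the floor-bound as a hypothesis that is merely "not directly available" rather than one that fails outright; as written, your derived arithmetic inequality cannot be applied in this subcase, and it is precisely these near-distinguished partitions (e.g.\ $[m^{a_m},3]$ with $m\geq 5$) that a complete proof must engage with. This is also why Lemma~\ref{GLblocksize4} singles out the tuple $(0,1,0,0)$ with the weaker bound $(r+1)\frac{n^2}{4}$ rather than $(r+1)\lfloor n^2/4\rfloor$.

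The paper sidesteps all of this by choosing a cleaner intermediate invariant: one proves, for all $x$ (including the $n'=3$ case), that
\[
\dim C'_{r-1}(\mathfrak{z}_\g(x))\leq (r+1)\tfrac{n^2}{4}-n^2+\dim\mathfrak{z}_\g(x),
\]
i.e.\ the \emph{rational} bound without the floor. This uniformly covers the exceptional partition $[3]$ (since $2(r+2)\leq (r+1)\frac{9}{4}$ for $r\geq 7$), propagates cleanly through the inductive step because $(n')^2$ and $n^2$ combine without any parity bookkeeping, and at the very end the strict inequality $\dim C'(x)<(r+1)\lfloor n^2/4\rfloor$ for non-maximal $x$ is extracted from the surplus $-(r-1)a_m$: one has $\dim C'(x)\leq (r+1)\frac{n^2}{4}-(r-1)a_m<(r+1)\frac{n^2-1}{4}\leq (r+1)\lfloor\frac{n^2}{4}\rfloor$ since $(r-1)a_m>\frac{r+1}{4}$ for $r\geq 7$. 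Your approach buys nothing that the paper's rational invariant does not, and it creates the parity casework and the hypothesis-failure at $n'=3$ that you acknowledge as the "main obstacle"; to be complete it would have to be patched at exactly the point where the paper's formulation already makes the problem disappear.
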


\begin{proof}
By Cor. \ref{square_zeroC'(x)}, we have $\dim C_r({\mathcal N}(\mathfrak{gl}_n))\geq (r+1)\lfloor \frac{n^2}{4}\rfloor$.
Suppose $x\in{\mathcal N}(\mathfrak{gl}_n)$ has associated partition $[m^{a_m},\ldots ,1^{a_1}]$ where $a_m>0$.
By Lemma \ref{GLblocksize4} {and Cor. \ref{square_zeroC'(x)}}, it will suffice to show that $\dim C'_{r-1}({\mathfrak z}_\g(x))< (r+1)\lfloor \frac{n^2}{4}\rfloor-n^2+\dim{\mathfrak z}_\g(x)$ whenever $m>4$.
Write $x$ in the form indicated in Lemma \ref{redcentlem}, let $n'=n-ma_m$, let $\g'=\mathfrak{gl}_{n'}$ and let $x'\in\g'$ be the submatrix of $x$ as indicated in the proof of Lemma \ref{rough_dim_commuting_nilpotents}.
Suppose by induction that $\dim C'_{r-1}(\z_{\g'}(x'))\leq (r+1)\frac{{n'}^2}{4}-n'^2+\dim\z_{\g'}(x')$.
Recall from the proof of Lemma \ref{rough_dim_commuting_nilpotents} that $\dim \z_\g(x)-\dim\z_{g'}(x')=a_m(2n-ma_m)$ and $n^2-{n'}^2 = ma_m(2n-ma_m)$.
By Remark \ref{inductioninequality} we have:
\begin{align*}
\dim C'_{r-1}(\z_\g(x)) & \leq \dim C'_{r-1}(\z_{\g'}(x')) + (r-1)(\dim\z_\g(x)-\dim\z_{\g'}(x'))-(r-1)a_m \\
 & \leq (r+1)\frac{{n'}^2}{4}-{n'}^2+\dim\z_{\g'}(x')+(r-1)(\dim\z_\g(x)-\dim\z_{\g'}(x'))-(r-1)a_m \\
 & = (r+1)\frac{n^2}{4}-n^2+\dim\z_\g(x) -a_m(2n-ma_m)\left(\frac{r-3}{4}m-(r-2)\right)-(r-1)a_m.
\end{align*}
Since $\frac{r-3}{4}m-(r-2)\geq \frac{r-7}{4}\geq 0$, it follows that $\dim C'_{r-1}({\mathfrak z}_\g(x))\leq (r+1)\frac{n^2}{4}-n^2+\dim\z_\g(x)-(r-1)a_m$, and therefore $\dim C'(x)\leq  (r+1)\frac{n^2}{4}-(r-1)a_m$.
As $(r+1)\frac{n^2}{4}-(r-1)a_m<(r+1)\frac{n^2-1}{4}\leq (r+1)\lfloor \frac{n^2}{4}\rfloor$, this completes our proof.
\end{proof}

Consequently, we obtain the dimension of $C_r(\mathfrak{gl}_n)$ and $C_r(\mathfrak{sl}_n)$ for the same values of $n$ and $r$ (and with the same assumption on the characteristic).

\begin{corollary}\label{precise_dim_commuting_matrices}
Assume ${\rm char}\, k\neq 2,3$, $n\geq 4$ and $r\geq 7$.
Let ${\mathfrak m}$ be a nil subalgebra of $\mathfrak{gl}_n$ of maximal dimension.
Then $$\dim C_r(\mathfrak{gl}_n) = \left\{ \begin{array}{ll} (r+1)\lfloor \frac{n^2}{4}\rfloor+r & \mbox{if $(n,r)\neq (4,7)$,} \\
40 & \mbox{if $(n,r)=(4,7)$.} \end{array} \right.$$
If $(n,r)\neq (4,7)$ then $G\cdot (kI_n+{\mathfrak m})^r$ is an irreducible component of maximal dimension; if $(n,r)\in \{ (4,7), (5,7), (4,8)\}$ then the generic component {$\overline{\GL_n\cdot \mathfrak{h}^r}$, where $\mathfrak{h}$ is a Cartan subalgebra of $\mathfrak{gl}_n$,} is an irreducible component of maximal dimension.

\end{corollary}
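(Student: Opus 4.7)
The plan is to extend the strategy of Corollary~\ref{rough_dim_commuting_matrices}, now invoking the sharp Theorem~\ref{precise_dim_commuting_nilpotents} in place of the crude Lemma~\ref{rough_dim_commuting_nilpotents}. Let $\mathcal{C}$ be an irreducible component of $C_r(\mathfrak{gl}_n)$. As in the proof of Corollary~\ref{rough_dim_commuting_matrices}, $\mathcal{C}$ is $\GL_r$-stable and so is contained in $\overline{\GL_n\cdot\prod_{i=1}^m C_r(\mathcal{N}(\mathfrak{gl}_{\lambda_i})+kI_{\lambda_i})}$ for some partition $\lambda=[\lambda_1,\ldots ,\lambda_m]$ of $n$. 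The isomorphism $C_r(\mathcal{N}(\mathfrak{gl}_{\lambda_i})+kI_{\lambda_i})\cong k^r\times C_r(\mathcal{N}(\mathfrak{gl}_{\lambda_i}))$ (splitting off scalar parts) then yields
\[
\dim\mathcal{C}\leq F(\lambda):=n^2-\sum_{i=1}^m\lambda_i^2+mr+\sum_{i=1}^m\dim C_r(\mathcal{N}(\mathfrak{gl}_{\lambda_i})).
\]

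The next step is to substitute Theorem~\ref{precise_dim_commuting_nilpotents} for parts $\lambda_i\geq 4$ and the explicit small-rank values (computed via Remark~\ref{glbrem} together with the splitting $C_r(\mathfrak{gl}_k)=k^r\times C_r(\mathfrak{sl}_k)$) for $\lambda_i\in\{1,2,3\}$, then to maximize $F(\lambda)$ over partitions of $n$. I expect the maximum to be attained by one of two extreme partitions: $\lambda=[n]$, yielding $F=(r+1)\lfloor n^2/4\rfloor+r$ and the ``nilpotent'' component $\overline{\GL_n\cdot(kI_n+\mathfrak{m})^r}$; or $\lambda=[1^n]$, yielding $F=n(n+r-1)$ and the ``generic'' Cartan component $\overline{\GL_n\cdot\mathfrak{h}^r}$. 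Direct comparison shows that the nilpotent value strictly dominates except in the three boundary cases $(n,r)\in\{(4,7),(5,7),(4,8)\}$, where the two values coincide for $(5,7)$ and $(4,8)$, and the generic value wins for $(4,7)$, giving $\dim=40$.

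The main obstacle is the case analysis ruling out all intermediate partitions. Writing $F(\lambda)=n^2+\sum_i g(\lambda_i)$ with $g(k):=\dim C_r(\mathcal{N}(\mathfrak{gl}_k))-k^2+r$, one computes $g(1)=r-1$, $g(2)=2r-3$, $g(3)=3r-5$, and for $k\geq 4$ the formula $g(k)=(r-3)\lfloor k^2/4\rfloor+r-\varepsilon_k$ (with $\varepsilon_k\in\{0,1\}$ according to parity). Two monotonicity checks, both easy for $r\geq 7$, then reduce every partition to one of the two extremes: (i)~for $a\geq 4$ and $b\geq 1$ one has $g(a+b)\geq g(a)+g(b)$, so any partition containing a part of size $\geq 4$ is bounded above by $g(n)$; (ii)~$g(k)/k$ is strictly decreasing on $\{1,2,3\}$, so any partition with all parts $\leq 3$ is bounded above by $ng(1)$. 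Combined with the comparison of $g(n)$ and $ng(1)$ from the previous paragraph, this establishes $\dim\mathcal{C}\leq\max\{(r+1)\lfloor n^2/4\rfloor+r,\,n(n+r-1)\}$.

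Finally, identifying the named components is routine. Theorem~\ref{precise_dim_commuting_nilpotents} realises $\overline{\GL_n\cdot\mathfrak{m}^r}$ as a top-dimensional component of $C_r(\mathcal{N}(\mathfrak{gl}_n))$ of dimension $(r+1)\lfloor n^2/4\rfloor$; adjoining the $r$ scalar directions (using $N_{\GL_n}(kI_n+\mathfrak{m})=N_{\GL_n}(\mathfrak{m})$) gives $\overline{\GL_n\cdot(kI_n+\mathfrak{m})^r}$ of dimension $(r+1)\lfloor n^2/4\rfloor+r$. The generic component $\overline{\GL_n\cdot\mathfrak{h}^r}$ has generic stabilizer equal to the normalizer of $\mathfrak{h}$, of dimension $n$, yielding dimension $n^2-n+nr=n(n+r-1)$. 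These two components then attain the claimed maximum in the respective cases.
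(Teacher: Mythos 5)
Your proposal is correct and takes essentially the same route as the paper: decompose by eigenvalue-multiplicity pattern, reduce to a bound $F(\lambda) = n^2 + \sum_i g(\lambda_i)$ over partitions of $n$, substitute Theorem~\ref{precise_dim_commuting_nilpotents} for parts of size $\geq 4$, and compare against the generic component. Your functional bookkeeping via $g(k)$ together with the two monotonicity observations --- superadditivity $g(a+b)\geq g(a)+g(b)$ once a part of size $\geq 4$ is present, and $g(k)\leq k\,g(1)$ on $\{1,2,3\}$ --- is a somewhat cleaner organization of the case analysis than the paper's direct inequality manipulation (which splits into the cases $l=m$ and $l<m$ and invokes Guralnick's irreducibility to reduce the all-small-parts case to the generic component), but the underlying idea is the same. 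One small inaccuracy in your write-up: you say the values for $k\in\{1,2,3\}$ follow from Remark~\ref{glbrem} together with $C_r(\mathfrak{gl}_k)\cong k^r\times C_r(\mathfrak{sl}_k)$, but that gives $\dim C_r(\mathfrak{sl}_k)$, not $\dim C_r(\mathcal{N}(\mathfrak{gl}_k))$, and for $k\in\{2,3\}$ these differ (e.g.\ $\dim C_r(\mathfrak{sl}_3)=2r+6$ while $\dim C_r(\mathcal{N}(\mathfrak{gl}_3))=2r+4$). The numbers you actually use, $g(1)=r-1$, $g(2)=2r-3$, $g(3)=3r-5$, are the correct ones, coming from the regular-orbit dimension $(k-1)(r+k-1)$ together with irreducibility of $C_r(\mathcal{N}(\mathfrak{gl}_k))$ for $k\leq 3$; and in any case even the slightly larger $\mathfrak{sl}_k$ values still give $g(k)\leq k\,g(1)$, so the argument goes through. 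The final identification of the two named components and their dimensions is also correct (and in fact makes explicit a step the paper leaves implicit).
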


\begin{proof}
Clearly $C_r(\mathcal{N}(\mathfrak{gl}_n))+(kI_n)^r\subseteq C_r(\mathfrak{gl}_n)$, therefore \[
\dim C_r(\mathfrak{gl}_n)\ge \dim C_r(\mathcal{N}(\mathfrak{gl}_n))+r=(r+1)\left\lfloor \frac{n^2}{4}\right\rfloor +r\]
for all $n\ge 4$ and $r\ge 7$.

Conversely, arguing as in Corollary \ref{rough_dim_commuting_matrices}, and using Theorem \ref{precise_dim_commuting_nilpotents} instead of {Lemma} \ref{rough_dim_commuting_nilpotents}, we obtain
\begin{align*}
\dim \mathcal{C} &\le n^2-\dim N_{GL_n}(\mathfrak{gl}_{\lambda_1}\times \cdots \times \mathfrak{gl}_{\lambda_m})+\sum _{i=1}^m\left(\dim C_r(\mathcal{N}(\mathfrak{gl}_{\lambda_i}))+r\right)\\
&\le n^2-\sum _{i=1}^m\lambda_i^2+mr+\sum _{i=1}^l(\lambda^2_i-2\lambda_i+1+r(\lambda_i-1))+\sum _{i=l+1}^m(r+1)\left\lfloor \frac{\lambda_i^2}{4}\right\rfloor
\end{align*}
for any component $\mathcal{C}$ of $C_r(\mathfrak{gl}_n)$, where $\lambda_i\le 3$ for $i\le l$ and $\lambda_i\ge 4$ for $i>l$.
If $l=m$, i.e. $\lambda_i\le 3$ for all $i$,
then all the varieties $C_r(\mathfrak{gl}_{\lambda_i})$ are irreducible by \cite{Gu:1992}, which implies that $\mathcal{C}$ is the generic component (see e.g. the proof of Lemma 5 in \cite{Si}). Hence $\dim \mathcal{C}=n^2+(r-1)n$. It is easy to see that $n^2+(r-1)n\le (r+1)\lfloor \frac{n^2}{4}\rfloor +r$ for all $r\ge 3+\frac{8}{n-3}$ if $n\ge 5$ is odd and for all $r\ge 3+\frac{8}{n-2}+\frac{4}{(n-2)^2}$ if $n\ge 4$ is even. Hence, $\dim \mathcal{C}\le (r+1)\lfloor \frac{n^2}{4}\rfloor +r$ if $r\ge 7$, except if $r=7$ and $n=4$. Moreover, we have the equality $\dim \mathcal{C}=(r+1)\lfloor \frac{n^2}{4}\rfloor +r$ if and only if $(n,r)\in \{(4,8),(5,7)\}$.

Now we suppose that there is some $\lambda_i\ge 4$.
We need to prove the following
\begin{align*}
n^2-\sum _{i=1}^m\lambda_i^2+mr+\sum _{i=1}^l\left(\lambda^2_i-2\lambda_i+1+r(\lambda_i-1)\right)+\sum _{i=l+1}^m(r+1)\left\lfloor \frac{\lambda_i^2}{4}\right\rfloor\le(r+1)\left\lfloor \frac{n^2}{4}\right\rfloor +r.
\end{align*}
Since $n\ge 4$, this inequality is obvious for $m=1$ (in fact, we have equality in this case), therefore we assume $m\ge 2$. Observe that $\lfloor \frac{\lambda_i^2}{4}\rfloor \le \frac{\lambda_i^2}{4}$ for each $i$ and $\lfloor \frac{n^2}{4}\rfloor\ge \frac{n^2-1}{4}$, and that we have shown above that $\lambda_i=1$ for $i\le l$.
So it suffices to show that
$$(r-3)\sum_{i=l+1}^m\frac{\lambda_i^2}{4}+(m-1)r+\frac{r+1}{4}-l<(r-3)\frac{n^2}{4}.$$
Since $n=l+\sum_{i=l+1}^m\lambda_i$,
the last inequality can be deduced from the following
\begin{align}\label{m,l}
 (m-1)r+\frac{r+1}{4}-l<(r-3)\Big(\sum_{l+1\le i<j\le m}\frac{\lambda_i\lambda_j}{2} +\frac{l}{2}\sum _{i=l+1}^m\lambda_i+\frac{l^2}{4}\Big). 
\end{align}

Assume first that $l\ge 1$. Since $\lambda_{i_0}\ge 4$ for some $i_0$, we have
$$\sum_{l+1\le i<j\le m}\frac{\lambda_i\lambda_j}{2}+\frac{l}{2}\sum _{i=l+1}^m\lambda_i\ge 2(m-1).$$
To prove (\ref{m,l}) it therefore suffices to show
$$\frac{r-3}{4}\le (r-6)(m-1)$$
which holds for any $r\ge 7$ and $m\ge 2$.

On the other hand, if $l=0$, then $\lambda_i\ge 4$ for each $i$, therefore
$$\sum _{l+1\le i<j\le m}\frac{\lambda_i\lambda_j}{2}\ge 4m(m-1),$$
which implies that (\ref{m,l}) holds for any $m\ge 2$ and $r\ge 4$.
\end{proof}

\begin{corollary}\label{precise_dim_commuting_matrices_sl}
Keep the assumptions and notation of Cor. \ref{precise_dim_commuting_matrices}.
If ${\rm char}\, k|n$ then $\dim C_r(\mathfrak{sl}_n)=\dim C_r(\mathfrak{gl}_n)$ and $G\cdot (kI_n+{\mathfrak m})^r$ is an irreducible component of maximal dimension.

Otherwise, $\dim C_r(\mathfrak{sl}_n)=(r+1)\lfloor \frac{n^2}{4}\rfloor=\dim C_r({\mathcal N}(\mathfrak{sl}_n))$ unless $(n,r)=(4,7)$, in which case the dimension is 33.
Clearly $G\cdot {\mathfrak m}^r$ is an irreducible component of maximal dimension if $(n,r)\neq (4,7)$; the generic component has maximal dimension for $(n,r)\in \{ (4,7), (4,8), (5,7)\}$.
\end{corollary}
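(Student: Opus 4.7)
The plan is to reduce everything to Corollary \ref{precise_dim_commuting_matrices} and Theorem \ref{precise_dim_commuting_nilpotents} via the relationship between $\mathfrak{gl}_n$ and $\mathfrak{sl}_n$, handling the characteristic dichotomy separately.

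First I would treat the case ${\rm char}\,k\nmid n$, where $\mathfrak{gl}_n = \mathfrak{sl}_n \oplus kI_n$. Since $I_n$ is central, for any tuple $(x_1,\ldots,x_r)\in\mathfrak{gl}_n^r$ the commutation relations depend only on the traceless parts $x_i^0 = x_i - \tfrac{\mathrm{tr}\, x_i}{n}I_n$. This gives a $G$-equivariant isomorphism $C_r(\mathfrak{gl}_n) \cong C_r(\mathfrak{sl}_n) \times \mathbb{A}^r$, from which $\dim C_r(\mathfrak{sl}_n) = \dim C_r(\mathfrak{gl}_n) - r$. Plugging in the values from Cor. \ref{precise_dim_commuting_matrices} yields $(r+1)\lfloor n^2/4\rfloor$ (respectively $33$ when $(n,r)=(4,7)$). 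Because $\mathcal{N}(\mathfrak{sl}_n)=\mathcal{N}(\mathfrak{gl}_n)$ (all nilpotent matrices are traceless), Theorem \ref{precise_dim_commuting_nilpotents} immediately gives $\dim C_r(\mathcal{N}(\mathfrak{sl}_n)) = (r+1)\lfloor n^2/4\rfloor$, proving the claimed equality outside the exceptional case.

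The irreducible components of $C_r(\mathfrak{sl}_n)$ correspond bijectively under the product decomposition above to those of $C_r(\mathfrak{gl}_n)$. The nil subalgebra $\mathfrak{m}$ lies in $\mathfrak{sl}_n$, and $G\cdot\mathfrak{m}^r$ is already an irreducible component of $C_r(\mathcal{N}(\mathfrak{gl}_n))$ of dimension $(r+1)\lfloor n^2/4\rfloor$, which equals $\dim C_r(\mathfrak{sl}_n)$ whenever $(n,r)\neq(4,7)$. For the three exceptional cases $(n,r)\in\{(4,7),(4,8),(5,7)\}$, the generic component of $C_r(\mathfrak{sl}_n)$ is $\overline{\SL_n\cdot(\mathfrak{h}\cap\mathfrak{sl}_n)^r}$, of dimension $r(n-1) + \dim\SL_n - \dim T = (n-1)(r+n)$; a direct arithmetic check shows this equals $(r+1)\lfloor n^2/4\rfloor$ (respectively $33$) in each of these three cases, confirming maximality.

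The case ${\rm char}\,k\mid n$ is even more direct. Here $\mathrm{tr}(I_n) = n = 0$ in $k$, so $I_n\in\mathfrak{sl}_n$ and therefore $kI_n + \mathfrak{m} \subseteq \mathfrak{sl}_n$. Consequently $G\cdot(kI_n+\mathfrak{m})^r \subseteq C_r(\mathfrak{sl}_n) \subseteq C_r(\mathfrak{gl}_n)$, and by Cor. \ref{precise_dim_commuting_matrices} the leftmost subset is already an irreducible component of $C_r(\mathfrak{gl}_n)$ of maximal dimension $(r+1)\lfloor n^2/4\rfloor + r$. This forces $\dim C_r(\mathfrak{sl}_n) = \dim C_r(\mathfrak{gl}_n)$ and exhibits $G\cdot(kI_n+\mathfrak{m})^r$ as a maximal-dimensional component of $C_r(\mathfrak{sl}_n)$. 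The exceptional case $(n,r)=(4,7)$ cannot arise under this hypothesis since ${\rm char}\,k\mid 4$ would force ${\rm char}\,k=2$, excluded by standing assumption. No step presents a genuine obstacle; the main subtlety is simply noticing that the splitting $\mathfrak{gl}_n = \mathfrak{sl}_n\oplus kI_n$ breaks down in the divisible case and that this is precisely what forces the dimension of $C_r(\mathfrak{sl}_n)$ to jump up by $r$.
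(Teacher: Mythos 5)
Your proof is correct, and for the case ${\rm char}\,k\nmid n$ you use a genuinely different and cleaner route than the paper. The paper re-runs the eigenspace-decomposition estimate from Cor.~\ref{precise_dim_commuting_matrices}, noting that some multiplicity $\lambda_i$ is prime to $p$ so that the trace-zero condition costs one parameter per tuple entry, giving the extra summand $-r$ in the bound (and invoking adapted versions of \cite{Gu:1992} and \cite[Lemma 5]{Si} for the identification of the generic component). You instead observe the $G$-equivariant direct product decomposition $C_r(\mathfrak{gl}_n)\cong C_r(\mathfrak{sl}_n)\times\mathbb{A}^r$ coming from the splitting $\mathfrak{gl}_n=\mathfrak{sl}_n\oplus kI_n$, which immediately gives $\dim C_r(\mathfrak{sl}_n)=\dim C_r(\mathfrak{gl}_n)-r$ together with a dimension-preserving bijection (up to the $+r$ offset) between irreducible components; everything then falls out by substituting the values from Cor.~\ref{precise_dim_commuting_matrices} and checking arithmetic for the three exceptional pairs. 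This avoids the need to re-derive the inequality \eqref{ineq} and to adapt external lemmas, at the small cost of having to note separately (as you do) that $\mathcal{N}(\mathfrak{sl}_n)=\mathcal{N}(\mathfrak{gl}_n)$ and that $G\cdot\mathfrak{m}^r$ remains a closed irreducible subset of $C_r(\mathfrak{sl}_n)$ of maximal dimension. For the case ${\rm char}\,k\mid n$, your argument (inclusion $G\cdot(kI_n+\mathfrak{m})^r\subseteq C_r(\mathfrak{sl}_n)\subseteq C_r(\mathfrak{gl}_n)$, with the leftmost already a maximal-dimensional component of the rightmost, and the exclusion of $(n,r)=(4,7)$ since ${\rm char}\,k\neq 2$) coincides with the paper's.
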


\begin{proof}
We note that if ${\rm char}\, k|n$ then $kI_n+\mathfrak{m}\subseteq \mathfrak{sl}_n$, so ${\rm SL}_n\cdot (kI_n+\mathfrak{m})^r = \GL_n\cdot (kI_n+{\mathfrak m})^r$ is contained in $C_r(\mathfrak{sl}_n)$.
The statement about the dimension in this case follows.
(Since ${\rm char}\, k\neq 2$, we cannot have $n=4$ here.)
For the remaining cases, we argue as in the proof of Cor. \ref{precise_dim_commuting_matrices}.
Note that our assumption on the characteristic implies that some $\lambda _i$ is not divisible by the characteristic. 
Consequently,
$$\dim \mathcal{C}\le n^2-\sum _{i=1}^m\lambda _i^2+\sum _{i=1}^m(\dim C_r(\mathcal{N}(\mathfrak{gl}_{\lambda _i}))+r)-r$$
and the conclusion follows by exactly the same argument as above if we slightly adapt the proofs of \cite{Gu:1992} and \cite[Lemma 5]{Si} to hold also in $\mathfrak{sl}_n$.
\end{proof}

\begin{remark}
a) The inequalities $\dim C_r(\mathcal{N}(\mathfrak{gl}_n))\le (r+1)\lfloor \frac{n^2}{4}\rfloor$ and $\dim C_r(\mathfrak{gl}_n)\le (r+1)\lfloor \frac{n^2}{4}\rfloor+r$ do not hold for small values of $r$. Recall that the regular component of $C_r(\mathcal{N}(\mathfrak{gl}_n))$ has dimension $(n-1)(r+n-1)$ (see \cite[Proposition 1]{NS}) which for $r,n\geq 4$ is greater than $(r+1)\lfloor \frac{n^2}{4}\rfloor$ if and only if  $r=4$ and $n\in \{4,5\}$.
Similarly, the generic component of $C_r(\mathfrak{gl}_n)$ has dimension $n^2+(r-1)n$ which, for $r,n\geq 4$ is greater than $(r+1)\lfloor \frac{n^2}{4}\rfloor +r$ if and only if 
$$(n,r)\in \{(4,4),(4,5),(4,6),(4,7),(5,4),(5,5),(5,6),(6,4),(6,5),(7,4),(8,4),(9,4),(10,4)\}.$$
We conjecture that these are the only values of $n\ge 4$ and $r\ge 4$ such that $\dim C_r(\mathcal{N}(\mathfrak{gl}_n))> (r+1)\lfloor \frac{n^2}{4}\rfloor$ resp. $\dim C_r(\mathfrak{gl}_n)>(r+1)\lfloor \frac{n^2}{4}\rfloor+r$.

b) The proof of of Thm. \ref{precise_dim_commuting_nilpotents} starts with the `base case' of a nilpotent element $x$ for which the Jordan blocks are of order at most four; once the base case is established, an induction argument proves the general case.
Given such a base case, the induction argument can only be applied when $r\geq 7$.
To determine the dimension when ${4}\leq r\leq 6$ with our methods, one would therefore have to enlarge the base case to consider Jordan blocks of larger orders.
\end{remark}

\section{Cohomological complexity theory}\label{complexity}

In this section we will discuss some applications of our earlier results to complexity of modules over finite group schemes, using a powerful result of Suslin-Friedlander-Bendel linking nilpotent commuting varieties with cohomological support varieties.
While the main results of this paper concern the type $A$ only, these applications suggested a more general statement about complexities of rational $G$-modules (Theorem \ref{complexity bound}) which we prove in all types.
The groups we are most interested in are Frobenius kernels and finite Chevalley groups.
We start with some notation and background for group schemes. 

\subsection{Notation and background}
We assume from now on that ${\rm char}\, k=p>0$ and $k=\overline{\mathbb{F}}_p$. Let $G$ be a reductive group scheme defined over $\mathbb F_p$.
(See \cite[I.2]{Jan:2003} for a definition.)
For each positive integer $r$, let $F_r:G\to G^{(r)}$ be the $r$-th Frobenius morphism (see \cite[I.9]{Jan:2003}).
The scheme-theoretic kernel $G_{(r)}=\ker(F_r)$ is called the \emph{$r$-th Frobenius kernel} of $G$.
On the other hand, the fixed-point subgroup $G(\mathbb F_{p^r})=G^{F_r}$ is a finite Chevalley group.
The subgroup schemes $G_{(r)}$ and $G(\mathbb{F}_{p^r})$ are important examples of {\it finite group schemes}, i.e. group schemes with a finite-dimensional coordinate algebra.
Fix $B \subseteq G$ a Borel subgroup of $G$ and let $U \subseteq B$ be the unipotent radical of $B$.
We also have the corresponding notation $B_{(r)}, U_{(r)}, B(\mathbb F_{p^r})$, and $U(\mathbb F_{p^r})$. Recall that $\mathfrak g=\text{Lie}(G)$, and let $\mathfrak u=\text{Lie}(U)$.

By convention, our $G$-modules are assumed to be finite-dimensional and rational over $G$.
The cohomological background below is valid for any finite group scheme $H$, but we will only consider $H$ to be one of $G_{(r)},B_{(r)},U_{(r)},G(\mathbb F_{p^r}),B({\mathbb F}_{p^r}),U({\mathbb F}_{p^r})$.
For any $H$-module $M$, the $m$-th cohomology of $H$ with coefficients in $M$ is defined as
\[ \opH^m(H,M)=\Ext^m_H(k,M).\]
We further denote 
\[ \Hbul(H,M)=\bigoplus_{m=0}^\infty\opH^m(H,M)\] and 
\[ \opH^{ev}(H,k)=
\begin{cases}
\displaystyle{\bigoplus_{m=0}^\infty\opH^{2m}(H,k)}~&\text{if}~p>2,\\
\Hbul(H,k)~&\text{if}~p=2.
\end{cases}\]
This is a commutative $k$-algebra under the cup product.
On the other hand, Yoneda composition provides an $ \opH^{ev}(H,k)$-module structure on $\Ext_H^\bullet(M,M)$. Let $J_M$ be the annihilator in $\opH^{ev}(H,k)$ under this action.
We define ${\mathcal V}_H(M)$, the {\it support variety} of $M$ over $H$ to be the maximal ideal spectrum of the quotient ring $\frac{\opH^{ev}(H,k)}{\sqrt{J_M}}$.
In particular, ${\mathcal V}_H(M)$ is a closed subvariety of ${\mathcal V}_H(k)=\spec\, {\opH^{ev}(H,k)_{\red}}$.

Given a graded vector space $\{V_i\}$, the {\it growth rate} of $\{V_i\}$ is defined to be the least integer $s\ge 0$ satisfying
\[ \lim_{n\to \infty}\frac{\dim V_n}{n^s}=0. \] 
If no such $s$ exists, one says that the growth rate for $\{V_i\}$ is infinite.
The {\it complexity} of $M$, denoted $c_H(M)$, is defined to be the growth rate of $\Ext^{\bullet}_H(M,M)$.
It is well known that $c_H(M)=\dim \mathcal{V}_H(M)$ for each finite dimensional $M$, see \cite{NPV:2002}. In particular, $c_H(k)=\dim\opH^{ev}(H,k)$.
 
\subsection{Complexity of modules over Frobenius kernels}
In the two papers \cite{SFB1:1997} and \cite{SFB2:1997}, Suslin, Friedlander, and Bendel determined support varieties of modules over $G_{(r)}$ in terms of nilpotent commuting varieties.
In particular, they showed that if $G$ has a structure of {\it exponential type} (see \cite[1.6]{Friedlander-rational} for a definition) then
\[ {\mathcal V}_{G_{(r)}}(k)=C_r(\N_{[p]}(\g)),\]
and for each $G$-module $M$ the support variety ${\mathcal V}_{G_{(r)}}(M)$ is a closed, 
conical $G$-subvariety of $C_r(\N_{[p]}(\g))$. We recall that the restricted 
nullcone $\N_{[p]}(\g)$ is a closed subvariety of $\N(\g)$, hence 
$C_r(\N_{[p]}(\g))\subseteq C_r(\N(\g))$.
Our calculation of the dimension of 
$C_r(\N(\g))$ provides insight into the complexity of $G$-modules.

\begin{theorem}\label{complexity of G_r}
Let $G=\GL_n$ and suppose $p>3$. If $n\ge 4, r\ge 7$, then for any $G_{(r)}$-module $M$, we have \[ c_{G_{(r)}}(M)\le 
(r+1)\lfloor \frac{n^2}{4}\rfloor 
\] with equality if the dimension of $M$ is not divisible by $p$.
\end{theorem}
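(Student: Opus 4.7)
The plan is to combine our dimension results for $C_r(\N(\g))$ with the Suslin--Friedlander--Bendel identification of support varieties with commuting varieties of restricted nilpotents. Recall from \cite{SFB2:1997} that, since $G=\GL_n$ is of exponential type, there is a $G$-equivariant homeomorphism $\mathcal{V}_{G_{(r)}}(k)\cong C_r(\N_{[p]}(\g))$, and moreover for any $G_{(r)}$-module $M$, $\mathcal{V}_{G_{(r)}}(M)$ is a closed, conical $G$-subvariety of $\mathcal{V}_{G_{(r)}}(k)$. Since $c_{G_{(r)}}(M)=\dim\mathcal{V}_{G_{(r)}}(M)$, in particular $c_{G_{(r)}}(M)\le c_{G_{(r)}}(k)=\dim C_r(\N_{[p]}(\g))$.

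For the upper bound, I would simply chain this with the closed inclusion $C_r(\N_{[p]}(\g))\hookrightarrow C_r(\N(\g))$, coming from $\N_{[p]}(\g)\subseteq\N(\g)$, and invoke Theorem \ref{precise_dim_commuting_nilpotents} to conclude
\[
c_{G_{(r)}}(M)\le \dim C_r(\N_{[p]}(\g))\le \dim C_r(\N(\g))=(r+1)\left\lfloor \frac{n^2}{4}\right\rfloor.
\]
The key observation is that in fact the second inequality is an equality. Indeed, Theorem \ref{precise_dim_commuting_nilpotents} identifies the irreducible components of maximal dimension of $C_r(\N(\g))$ as the (closures of) $G\cdot \mathfrak{u}_{m,m}^r$ when $n=2m$, resp.\ $G\cdot\mathfrak{u}_{m,m+1}^r$ and $G\cdot\mathfrak{u}_{m+1,m}^r$ when $n=2m+1$. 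By Remark \ref{CFPremark}, each $\mathfrak{u}_{l,n-l}$ consists of square-zero matrices, hence is contained in $\N_{[p]}(\g)$ (as $p\ge 2$). Thus these components already lie in $C_r(\N_{[p]}(\g))$, which forces $\dim C_r(\N_{[p]}(\g))=(r+1)\lfloor n^2/4\rfloor$.

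For the equality statement when $p\nmid\dim M$, I would invoke the standard argument that the trivial module $k$ splits off $M\otimes M^*$ via the composition $k\xrightarrow{\text{coev}}M\otimes M^*\xrightarrow{(\dim M)^{-1}\text{ev}}k$, which equals $\mathrm{id}_k$ exactly because $\dim M$ is invertible in $k$. Using the tensor product formula $\mathcal{V}_{G_{(r)}}(M\otimes M^*)=\mathcal{V}_{G_{(r)}}(M)\cap \mathcal{V}_{G_{(r)}}(M^*)$ from \cite{NPV:2002} and the self-duality $\mathcal{V}_{G_{(r)}}(M)=\mathcal{V}_{G_{(r)}}(M^*)$, the fact that $k$ is a summand of $M\otimes M^*$ gives $\mathcal{V}_{G_{(r)}}(k)\subseteq \mathcal{V}_{G_{(r)}}(M\otimes M^*)=\mathcal{V}_{G_{(r)}}(M)$. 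Combined with the reverse containment, this yields $c_{G_{(r)}}(M)=c_{G_{(r)}}(k)=(r+1)\lfloor n^2/4\rfloor$.

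There is essentially no serious obstacle remaining: the deep work is concentrated in Theorem \ref{precise_dim_commuting_nilpotents}, and the present result is a relatively direct translation via the Suslin--Friedlander--Bendel theorem together with formal properties of support varieties. The only point that requires a moment's thought is the observation that the maximal components of $C_r(\N(\g))$ automatically consist of $p$-restricted tuples, so that passing from $\N(\g)$ to $\N_{[p]}(\g)$ costs nothing in dimension.
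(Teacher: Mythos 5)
Your proof is correct and follows essentially the same route as the paper: use the Suslin--Friedlander--Bendel identification $\mathcal{V}_{G_{(r)}}(k)\cong C_r(\N_{[p]}(\g))$ (valid since $\GL_n$ is of exponential type), observe that the maximal components $G\cdot\mathfrak{u}_{l,n-l}^r$ of $C_r(\N(\g))$ already lie in $C_r(\N_{[p]}(\g))$ because square-zero matrices are $p$-restricted, and conclude via Theorem \ref{precise_dim_commuting_nilpotents}. The only cosmetic difference is in the equality statement for $p\nmid\dim M$: the paper simply cites \cite[Example 6.9]{SFB2:1997}, whereas you reproduce the underlying argument (that $k$ splits off $M\otimes M^*$, forcing $\mathcal{V}_{G_{(r)}}(k)\subseteq\mathcal{V}_{G_{(r)}}(M)$ via the tensor product formula) -- this is the same fact, just spelled out rather than referenced.
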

\begin{proof}
As $\GL_n$ is of exponential type, we have from the result of Suslin-Friedlander-Bendel that
\[  c_{G_{(r)}}(M)\le \dim C_r(\N_{[p]}(\g))\le \dim C_r(\N(\g)). \]
By \cite[Example 6.9]{SFB2:1997}, we have \[c_{G_{(r)}}(M)=c_{G_{(r)}}(k)=\dim C_r(\N_{[p]}(\g))\]
for every $G$-module $M$ whose dimension is not divisible by $p$.
On the other hand, if ${\mathfrak m}$ is a commutative nil subalgebra of maximal dimension (i.e. ${\mathfrak m}={\mathfrak u}_{m,m}$ if $\g=\mathfrak{gl}_{2m}$, ${\mathfrak m}={\mathfrak u}_{m,m+1}$ or ${\mathfrak u}_{m+1,m}$ if $\g=\mathfrak{gl}_{2m+1}$) then we observe that 
\[ G\cdot{\mathfrak m}^r\subseteq C_r(\N_{[p]}(\g)). \]
Hence, our assertion follows from Theorem \ref{precise_dim_commuting_nilpotents}. 
\end{proof}

\begin{remark}
For the case $p=2$, the second author already proved the equality for $G=\GL_n$ and $n, r\ge 1$, see \cite[Proposition 3.3.4]{Ngo:2014}.
We expect the theorem also to hold for the case $p=3$.
\end{remark}

We now consider what can be said about the complexity of the restriction of an arbitrary rational $G$-module to $G_{(r)}$.
If $M$ has dimension coprime to $p$ then the support variety of $M$ over $G_{(r)}$ is the same as the support variety of the trivial module over $G_{(r)}$. 
On the other hand, there are in general many other rational $G$-modules $M$ for which $c_{G_{(r)}}(M)=c_{G_{(r)}}(k)$ but ${\mathcal V}_{G_{(r)}}(M)\subsetneq{\mathcal V}_{G_{(r)}}(k)$.

To explain this, assume in the following discussion that $G=\SL_{n}$.
(Restriction from $\GL_{n}$ to $\SL_{n}$ clearly has no effect on support varieties.)
Fix a maximal torus $T$ contained in $B$, and let $\Delta=\{\alpha_1,\ldots ,\alpha_{n-1}\}$ be the corresponding basis of simple roots (numbered in the standard way).
Let $X(T)$, resp. $Y(T)$, be the lattice of characters, resp. cocharacters in $T$.
Since $G$ is simply-connected, $Y(T)=\oplus_{i=1}^n {\mathbb Z}\alpha_i^\vee$.
Let $\omega_1,\ldots ,\omega_{n-1}$ be the fundamental weights, i.e. such that $\langle \omega_j,\alpha_i^\vee\rangle=\delta_{ij}$ for $1\leq i,j\leq n-1$, where $\langle .\, ,.\rangle$ denotes the perfect pairing $X(T)\times Y(T)\rightarrow{\mathbb Z}$.
Then $X(T)=\oplus_{i=1}^{n-1} {\mathbb Z}\omega_i$.
A weight $\lambda\in X(T)$ is \emph{$p^r$-restricted} (dominant) if $0\leq\langle \lambda, \alpha_i^\vee\rangle<p^r$ for $1\leq i\leq n-1$; then $\lambda=\lambda_0+p\lambda_1+\ldots +p^{r-1}\lambda_{r-1}$ where $\lambda_0,\ldots ,\lambda_{r-1}$ are $p$-restricted.
For a dominant weight $\lambda$ let $L(\lambda)$ denote the simple (finite-dimensional) $G$-module with highest weight $\lambda$.
If $\lambda$ is $p^r$-restricted then $L(\lambda)|_{G_{(r)}}$ is also irreducible.
Sobaje proved \cite[Thm. 3.2]{So} that for $p$ large enough, we have:
$${\mathcal V}_{G_{(r)}}(L(\lambda))=\left\{ (x_0,\ldots ,x_{r-1})\in C_r({\mathcal N}_{[p]}(\g)): x_i\in{\mathcal V}_{G_{(1)}}(L(\lambda_i))\right\}$$
and therefore that $\dim{\mathcal V}_{G_{(r)}}(L(\lambda))=\dim C_r({\mathcal N}(\g))$ if and only if ${\mathcal V}_{G_{(1)}}(L(\lambda_i))\supseteq G\cdot{\mathfrak m}=\overline{\mathcal O}$ for $0\leq i\leq r-1$, where ${\mathcal O}$ is the maximal nilpotent orbit for which all parts of the partition have order $\leq 2$.
(Recall that ${\mathfrak m}$ is a commutative nil subalgebra of maximal dimension.) This is equivalent to the fact that each $\calV_{G_{(1)}}(L(\lambda_i))$ contains an element $e\in\calO$. Now combining with the algebraic description of $\calV_{G_{(1)}}(L(\lambda_i))$ in \cite[Example 1.18(1)]{FP}, we have proved the following.

\begin{theorem}\label{maxcomplex}
Let $G=\SL_{n}$ with $n\geq 3$. Suppose $p>n^3/4$ and $r\ge 7$. Let $\lambda=\lambda_0+p\lambda_1+\ldots+p^{r-1}\lambda_{r-1}$ where $\lambda_i$ is $p$-restricted for $0\le i\le r-1$. Then $c_{G_{(r)}}(L(\lambda))=c_{G_{(r)}}(k)$ if and only if for each $i$ an element e of the maximal square-zero orbit has at least one Jordan block of size $< p$ when acting on $L(\lambda_i)$.
\end{theorem}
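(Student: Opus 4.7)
The plan is to assemble Sobaje's product description of $\mathcal{V}_{G_{(r)}}(L(\lambda))$ with our Theorem \ref{precise_dim_commuting_nilpotents} classifying the top-dimensional components of $C_r(\N(\g))$, and then translate the resulting geometric condition into the Jordan block criterion by invoking the Friedlander--Parshall description of $\mathcal{V}_{G_{(1)}}(L(\lambda_i))$ \cite[Example 1.18(1)]{FP}. The hypothesis $p>n^3/4$ covers both the requirement in Sobaje's theorem and the requirement $p\geq n$ needed to identify $\N_{[p]}(\g)$ with $\N(\g)$, so that Theorem \ref{complexity of G_r} gives $c_{G_{(r)}}(k)=\dim C_r(\N(\g))=(r+1)\lfloor n^2/4\rfloor$.

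First I would observe that $c_{G_{(r)}}(L(\lambda))=c_{G_{(r)}}(k)$ is equivalent to $\mathcal{V}_{G_{(r)}}(L(\lambda))$ containing an irreducible component of $C_r(\N(\g))$ of the maximal possible dimension: any closed irreducible subset of $C_r(\N(\g))$ of that dimension must itself be an irreducible component. By Theorem \ref{precise_dim_commuting_nilpotents} every such component has the form $G\cdot\mathfrak{m}^r$ for a maximal commutative nil subalgebra $\mathfrak{m}\subseteq\g$, and by $G$-stability of $\mathcal{V}_{G_{(r)}}(L(\lambda))$ this inclusion is in turn equivalent to $\mathfrak{m}^r\subseteq\mathcal{V}_{G_{(r)}}(L(\lambda))$. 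Sobaje's formula
$$\mathcal{V}_{G_{(r)}}(L(\lambda))=\{(x_0,\ldots,x_{r-1})\in C_r(\N_{[p]}(\g)):x_i\in\mathcal{V}_{G_{(1)}}(L(\lambda_i))\text{ for every }i\}$$
then reduces this to the conditions $\mathfrak{m}\subseteq\mathcal{V}_{G_{(1)}}(L(\lambda_i))$ for $0\leq i\leq r-1$.

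Next I would reduce each such inclusion to a single orbital statement. Let $\mathcal{O}$ denote the maximal square-zero orbit (partition $[2^{\lfloor n/2\rfloor},1^{n\bmod 2}]$), so that $G\cdot\mathfrak{m}=\overline{\mathcal{O}}$. Since each $\mathcal{V}_{G_{(1)}}(L(\lambda_i))$ is closed and $G$-stable, the inclusion $\mathfrak{m}\subseteq\mathcal{V}_{G_{(1)}}(L(\lambda_i))$ is equivalent to $\overline{\mathcal{O}}\subseteq\mathcal{V}_{G_{(1)}}(L(\lambda_i))$, and hence --- as $\mathcal{O}$ is a single orbit --- to a chosen representative $e\in\mathcal{O}$ lying in $\mathcal{V}_{G_{(1)}}(L(\lambda_i))$.

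The final step is to apply \cite[Example 1.18(1)]{FP}, which characterises $e\in\mathcal{V}_{G_{(1)}}(L(\lambda_i))$ as the failure of $L(\lambda_i)$ to be free as a module over the restricted enveloping algebra $u(\langle e\rangle)\cong k[t]/(t^p)$ with $t$ acting as $e$. Since the representation is restricted one has $\rho(e)^p=\rho(e^{[p]})=0$, so $e$ acts on $L(\lambda_i)$ with Jordan blocks of size at most $p$, and freeness over $k[t]/(t^p)$ is equivalent to all blocks having size exactly $p$. Thus $e\in\mathcal{V}_{G_{(1)}}(L(\lambda_i))$ iff $e$ has at least one Jordan block of size strictly less than $p$ on $L(\lambda_i)$, which is the stated condition. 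The chain of reductions is fairly mechanical once the three cited inputs are in place; the substantive obstacle was Theorem \ref{precise_dim_commuting_nilpotents} itself, which pins down the shape and position of the top-dimensional components of $C_r(\N(\g))$ and thereby singles out square-zero nilpotents as the critical test elements.
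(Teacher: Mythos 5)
Your proposal is correct and follows essentially the same path as the paper's own proof, which is sketched in the paragraph immediately preceding the theorem: Sobaje's product formula for $\mathcal{V}_{G_{(r)}}(L(\lambda))$, the classification of top-dimensional components of $C_r(\N(\g))$ from Theorem \ref{precise_dim_commuting_nilpotents} (via $\overline{\mathcal{O}}=G\cdot\mathfrak{m}$), and the Friedlander--Parshall algebraic characterisation of $\mathcal{V}_{G_{(1)}}(L(\lambda_i))$ translating membership of $e$ into the Jordan-block condition. You make explicit a couple of steps the paper leaves implicit (that a maximal-dimensional closed irreducible subset must itself be a component, and the use of the product structure of $\mathfrak{m}^r$ to isolate each factor), but the underlying argument is the same.
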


It would be nice if a geometric description of ${\mathcal V}_{G_{(1)}}(L(\lambda_i))$ could be obtained.
Unfortunately, it is in general a hard problem. If $p$ is extremely large, then one can determine the support variety ${\mathcal V}_{G_{(1)}}(L(\lambda_i))$ combinatorially using \cite[Thm. 4.1]{dnp} and the fact that Lusztig's character formula holds for all restricted dominant weights. Note also that Lusztig's conjecture has been discussed in detail elsewhere, so we only mention Andersen-Jantzen-Soergel's proof (for an unknown but extremely large bound) \cite{ajs} and Williamson's discovery of various counter-examples \cite{Will} for moderately large $p$, indicating that the required bound on $p$ is at least exponential in the rank.


\subsection{Connection to finite Chevalley groups}

In the rest of the paper we discuss some observations concerning support varieties and cohomological complexity for modules over the Chevalley group $G({\mathbb F}_{p^r})$, considered in connection with support varieties for $G_{(r)}$.
The main result here was inspired by our calculations on complexity of $G_{(r)}$-modules in the previous subsection.

Interactions between the three categories of $G$-, $G_{(r)}$-, and $G(\mathbb F_{p^r})$-modules have been studied for over fifty years, see for example \cite[Ch. 10]{Hum}.
The restriction functors from $G$-modules to $G_{(r)}$- or $G({\mathbb F}_{p^r})$-modules allow us to relate the cohomology of $G_{(r)}$ to that of $G(\mathbb F_{p^r})$.
However, very little has been established in the way of a direct connection between the categories of $G_{(r)}$- and $G(\mathbb F_{p^r})$-modules.
A distinguished contribution in this direction is a result of Lin and 
Nakano \cite[Thm. 3.4(b)]{LN} stating that
\[ c_{G(\mathbb{F}_p)}(M)\le\frac{1}{2}c_{G_{(1)}}(M) \]
for any $G$-module $M$.
We are going to generalize this to $G_{(r)}$ and $G(\mathbb F_{p^r})$ for all $r\ge 1$.
This extension was inspired by the observation for $p>3$ that:
\[c_{G(\mathbb{F}_{p^{r}})}(k)=\frac{r}{r+1}c_{G_{(r)}}(k)\]
for  $n\ge 4, r\ge 7$ if $G=GL_n$, as follows easily by comparing Theorem \ref{complexity of G_r} with \cite[Table 1, p. 154]{Hum}.
In subsequent work \cite{LNS} we will show that this also holds for $G={\rm Sp}_{2n}$ and a similar range of values of $n$, $r$.
In any case (and for any $G$), we have
\begin{align}\label{complexity_observation}
c_{G(\mathbb{F}_{p^{r}})}(k)\le \frac{r}{r+1}c_{G_{(r)}}(k)
\end{align}
at least for large enough $r$.

In the rest of this paper, we proceed to prove the inequality \eqref{complexity_observation} for an arbitrary simple algebraic group $G$ and $G$-module $M$.
The strategy is to first establish an inequality of the complexities over the Borel subgroups, and then induce up to $G$.
The induction stage requires us to analyze some special geometric properties of $G$-saturation varieties. 

For the next two results, we shall assume that $G$ is a simple algebraic group and $p$ is good for $G$.

\begin{theorem}
Let $W$ be a $B$-stable closed subvariety of $\fraku^r$ for some positive integer $r$. Then we have
\[  \dim G\cdot W\ge \frac{r+1}{r}\dim W. \]
\end{theorem}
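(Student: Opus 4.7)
The goal is $\dim(G\cdot W)\ge\tfrac{r+1}{r}\dim W$.  I would proceed by induction on $r$, with the base case $r=1$ given by the orbital variety theorem recalled in \S\ref{gradingsec}.  After reducing to $W$ irreducible---using that $B$ is connected so preserves each irreducible component setwise---the closure $\overline{G\cdot W}$ is a $G$-stable irreducible closed subvariety of $\g$, hence equals $\overline{\calO}$ for a unique nilpotent orbit $\calO$, and $W\subseteq\overline{\calO}\cap\fraku$ has dimension at most $\tfrac{1}{2}\dim\calO=\tfrac{1}{2}\dim(G\cdot W)$, as needed.

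For the inductive step with $r\ge 2$, let $\pi\colon\fraku^r\to\fraku^{r-1}$ be projection onto the first $r-1$ factors, set $W'=\overline{\pi(W)}$ (an irreducible $B$-stable closed subvariety of $\fraku^{r-1}$), and let $F\subseteq\fraku$ denote the generic fibre of $\pi|_W$, so that $\dim W=\dim W'+\dim F$.  By the inductive hypothesis, $\dim(G\cdot W')\ge\tfrac{r}{r-1}\dim W'$.  Writing $\delta:=\dim(G\cdot W)-\dim(G\cdot W')$, a direct algebraic manipulation shows that the target inequality reduces to
\[
r(r-1)\,\delta+\dim W'\;\ge\;(r^2-1)\dim F.
\]
One can further identify $\delta$ with the dimension of the orbit $Z_G(y')\cdot y_r$ for a generic lift $(y',y_r)\in G\cdot W$ of a generic $y'\in G\cdot W'$, turning the task into a concrete geometric estimate on this centralizer orbit.

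The hard part will be establishing the last displayed inequality: the fibre $F$ is only stable under $Z_B(x')$ for a generic $x'\in W'$, and not under the full Borel $B$, so the $r=1$ orbital variety bound cannot be applied to $F$ directly.  To overcome this I would either (a) strengthen the induction hypothesis to allow closed subvarieties of the $r$-fold Cartesian power of the nilradical of $\Lie(P)$ that are stable under a parabolic subgroup $P\supseteq B$---so that, via the Levi decomposition of the reductive part of $Z_G(x')$, induction applies to $F$ inside the appropriate parabolic---or (b) forgo the recursion and instead bound the generic fibre of the Springer-like map $\psi\colon G\times_B W\to G\cdot W$ directly, using that $\psi^{-1}(y)\cong\{gB\in G/B:g^{-1}\cdot y\in W\}$ lies inside the iterated Springer fibre $\bigcap_i\calB_{y_i}$ and applying a refined intersection-dimension estimate.
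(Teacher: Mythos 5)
Your base case ($r=1$) is correct and is essentially the orbital variety argument the paper uses: an irreducible $B$-stable $W\subseteq\fraku$ lies inside some $\overline{\calO}\cap\fraku$, which has dimension $\tfrac12\dim\calO$. The difficulty is the inductive step, which you have not completed. You explicitly flag that the inequality $r(r-1)\delta+\dim W'\ge(r^2-1)\dim F$ is ``the hard part,'' and your options (a) and (b) remain strategies rather than arguments. You also correctly identify the obstruction---the generic fibre $F$ of $\pi|_W$ is stable only under $Z_B(x')$ for generic $x'\in W'$, not under all of $B$---but neither strengthening the induction to parabolic normalizers nor the Springer-fibre route is carried through, and it is not evident that either would work without substantial extra input. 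As written this is a genuine gap.

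The paper avoids induction entirely by projecting onto a \emph{single} coordinate rather than the first $r-1$. Since $W\subseteq V_1\times\cdots\times V_r$ with $V_i=\overline{\pi_i(W)}$, one has $\dim W\le\sum_i\dim V_i$, so after permuting factors one may assume $\dim V_1\ge\tfrac1r\dim W$. For any $w\in W$ the condition $g^{-1}\cdot w\in W$ forces $g^{-1}\cdot\pi_1(w)\in V_1$, so the generic fibre of $G\times W\to\overline{G\cdot W}$ sits inside the generic fibre of $G\times V_1\to\overline{G\cdot V_1}$ (a genericity check via dominance of $\pi_1$ is needed, and the paper supplies it). The $r=1$ orbital bound $\dim G\cdot V_1\ge 2\dim V_1$ then gives $\dim G-\dim\{g:g^{-1}\cdot\pi_1(w)\in V_1\}\ge\dim V_1$, and the fibre-dimension theorem yields $\dim G\cdot W\ge\dim W+\dim V_1\ge\tfrac{r+1}{r}\dim W$. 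The point you should take away is that $V_1$, unlike your fibre $F$, is genuinely a $B$-stable subvariety of $\fraku$, so the orbital variety theorem applies to it directly with no need to descend to smaller groups.
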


\begin{proof}
We can clearly assume that $W$ is irreducible.
Let $\pi_1$ be the restriction to $W$ of the projection from ${\mathfrak u}^r$ onto the first factor, and let $V_1=\overline{\pi_1(W)}$.
We may also assume that $\dim V_1\geq \frac{1}{r}\dim W$.
By the theorem on the dimensions of fibres, we have $\dim G\cdot W=\dim G+\dim W-\min_{w\in W}\dim \{ g\in G : g^{-1}\cdot w\in W\}$.
The inequality $\dim G\cdot W\geq \frac{r+1}{r}\dim W$ is therefore equivalent to: $$\dim G-\min_{w\in W}\dim\{ g\in G: g^{-1}\cdot w\in W\}\geq \frac{1}{r}\dim W.$$

We first consider the case $r=1$, i.e. $W=V_1$.
Since $G\cdot V_1$ is closed and $G$-stable in $\N(\g)$, we can write $G\cdot V_1=\bigcup_{j=1}^m\overline{\calO_{j}}$ where each $\calO_j$ is a nilpotent orbit in $\N(\g)$. Then 
\[ V_1\subseteq  \left(\bigcup_{j=1}^m\overline{\calO_{j}}\right)\cap\fraku=\bigcup_{j=1}^m\left(\overline{\calO_{j}}\cap\fraku\right).\]
By \cite[Theorem 10.11(1)]{Jan:2004}, for each $j$, $\dim\left(\overline{\calO_{j}}\cap\fraku\right)=\frac{1}{2}\dim\calO_j$ and therefore
$2\dim V_1\le \dim G\cdot V_1$.
Hence we have $\dim G-\min_{v\in V_1}\dim\{ g\in G: g^{-1}\cdot v\in V_1\}\geq \dim V_1$.

In the general case, let $\mathcal U_1$ be the open subset of $V_1$ such that $\dim\{ g\in G: g^{-1}\cdot u_1\in V_1\}$ is minimal for all $u_1\in \mathcal U_1$, and let $\mathcal U$ be the corresponding open subset of $W$.
Since $W$, $V_1$ are irreducible and $\pi_1:W\rightarrow V_1$ is dominant, it follows that there exists $u\in\mathcal U$ such that $u_1=\pi_1(u)\in\mathcal U_1$.
Then we clearly have 
\begin{align*}
\dim G-\min_{w\in W}\dim\{ g\in G: g^{-1}\cdot w\in W\} & = \dim G-\dim\{ g\in G: g^{-1}\cdot u\in W\} \\
 & \geq \dim G-\dim\{ g\in G: g^{-1}\cdot u_1\in V_1\}\\
& \geq \dim V_1\geq \frac{1}{r}\dim W.
\end{align*}
\end{proof}

\begin{corollary}\label{inequality of complexity B_r and G_r}
Let $M$ be a $G$-module. For all $r\ge 1$, we have  \[ \displaystyle{\dim \mathcal{V}_{G_{(r)}}(M)\ge\frac{r+1}{r}\dim \mathcal{V}_{B_{(r)}}(M)}.\]
In other words, \[ c_{G_{(r)}}(M)\ge\frac{r+1}{r}c_{B_{(r)}}(M). \]
\end{corollary}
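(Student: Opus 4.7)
The plan is to apply the preceding theorem directly to $W := \mathcal{V}_{B_{(r)}}(M)$, after checking two compatibility statements: that $W$ is a $B$-stable closed subvariety of $\fraku^r$, and that its $G$-saturation is contained in $\mathcal{V}_{G_{(r)}}(M)$. Granted both facts, the theorem immediately yields
$$\dim \mathcal{V}_{G_{(r)}}(M) \;\geq\; \dim G\cdot\mathcal{V}_{B_{(r)}}(M) \;\geq\; \frac{r+1}{r}\,\dim \mathcal{V}_{B_{(r)}}(M),$$
and the complexity inequality follows from the identity $c_H(-) = \dim \mathcal{V}_H(-)$ recalled in the background.

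First I would verify that $\mathcal{V}_{B_{(r)}}(M) \subseteq \fraku^r$. Since $\frakb$ is solvable, its nilpotent cone---and hence its restricted nullcone---coincides with the nilradical, so $\N_{[p]}(\frakb) = \N_{[p]}(\fraku) \subseteq \fraku$. Because $p$ is good and $B$ inherits an exponential structure from $G$, the Suslin--Friedlander--Bendel identification recalled at the start of \S\ref{complexity} gives $\mathcal{V}_{B_{(r)}}(k) \cong C_r(\N_{[p]}(\fraku))$; the subvariety $\mathcal{V}_{B_{(r)}}(M)$ is then closed, conical, and $B$-stable inside $\fraku^r$.

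Next I would establish that $G\cdot\mathcal{V}_{B_{(r)}}(M) \subseteq \mathcal{V}_{G_{(r)}}(M)$. Restriction $\Hbul(G_{(r)},k) \to \Hbul(B_{(r)},k)$ induces a morphism of support varieties which, under the SFB identifications, is nothing other than the natural inclusion $C_r(\N_{[p]}(\fraku)) \hookrightarrow C_r(\N_{[p]}(\g))$. By the general naturality of cohomological support under restriction of finite group schemes, this inclusion sends $\mathcal{V}_{B_{(r)}}(M)$ into $\mathcal{V}_{G_{(r)}}(M)$. Since $M$ is rational over $G$, the latter support variety is $G$-stable, so it contains the full $G$-saturation of $\mathcal{V}_{B_{(r)}}(M)$.

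The main (though not severe) obstacle is to pin down these two compatibility statements in the present generality---namely, the inheritance of an exponential structure on $B$ and the precise behavior of SFB support under restriction from $G_{(r)}$ to $B_{(r)}$. Once these standard good-characteristic facts are in hand, the preceding theorem handles all of the geometric content, and the passage from dimensions of support varieties to complexities is immediate.
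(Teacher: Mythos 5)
Your overall strategy coincides with the paper's: apply the preceding theorem to $W := \mathcal{V}_{B_{(r)}}(M)$, viewed as a $B$-stable closed subvariety of $\fraku^r$, and then compare $G\cdot\mathcal{V}_{B_{(r)}}(M)$ with $\mathcal{V}_{G_{(r)}}(M)$. Two remarks. First, the step you flag as the ``main (though not severe) obstacle''---showing that $\mathcal{V}_{B_{(r)}}(M)$ really can be identified with a subvariety of $\fraku^r$---is where the paper spends essentially all of its effort: for classical groups it cites Lemmas 1.7--1.8 of Suslin--Friedlander--Bendel, while for exceptional types it assembles the identification $\mathcal{V}_{U_{(r)}}(k)\cong C_r(\N_{[p]}(\fraku))$ from Sobaje's results on Springer isomorphisms and exponential maps in good characteristic (checking that the exponential map restricts to $U$), together with the standard equality $\mathcal{V}_{B_{(r)}}(k)=\mathcal{V}_{U_{(r)}}(k)$. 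Your phrase ``$B$ inherits an exponential structure from $G$'' is not an obvious formal consequence, so this step is a genuine technical task rather than a small gap. Second, the paper invokes the stronger equality $G\cdot\mathcal{V}_{B_{(r)}}(M)=\mathcal{V}_{G_{(r)}}(M)$ (citing Benson), whereas you derive only the inclusion $G\cdot\mathcal{V}_{B_{(r)}}(M)\subseteq\mathcal{V}_{G_{(r)}}(M)$ from naturality of restriction; for the dimension inequality the inclusion alone suffices, so this is a harmless simplification.
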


\begin{proof}
We first show that $\mathcal V_{B_{(r)}}(M)$ can be identified with a subvariety of $\fraku^r$. If $G$ is of classical type, then this is done by Lemmas 1.7 and 1.8 in \cite{SFB1:1997}. For other types of $G$, we can make use of Sobaje's results in \cite{So15}\cite{So15b}.
Explicitly, consider the exponential map exp$:\N_{[p]}(\g)\to\mathcal U_1(G)$ defined in \cite[Theorem 3.4]{So15}.
From \cite[Corollary 4.3]{So15b}, this map is in fact the restriction of a Springer isomorphism $\phi:\N(\g)\xrightarrow{\sim}\mathcal U(G)$.
Then it is well-known that $\phi$ restricts to the unipotent radical of any parabolic subgroup of $G$, see for example \cite[Theorem 1.1]{So15b}.
In other words, the map exp restricts to $\N_{[p]}(\fraku)\xrightarrow{\sim} \mathcal U_1(U)$ giving that $U$ has an exponential map defined in \cite[Definition 2.3]{So15} (see \cite[Theorem 3.5]{So15}).
Hence, Theorem 2.5 of \cite{So15} implies an identification between $\mathcal{V}_{U_{(r)}}(k)$ and $C_r(\N_{[p]}(\fraku))$. Since $\mathcal{V}_{B_{(r)}}(k)=\mathcal{V}_{U_{(r)}}(k)$ (see for example the last line of page 23 \cite{NPV:2002}), $\mathcal V_{B_r}(M)$ can be considered as a $B$-stable subvariety of $\fraku^r$.
We note that those results of Sobaje depend on various types of prime $p$ including {\it good}, {\it pretty good}, and {\it separably good}. These types of prime are only different when $G$ is of type $A$, so it does not affect our proof. Finally, the corollary follows immediately from the above theorem and the fact that $G\cdot \mathcal{V}_{B_{(r)}}(M)=\mathcal{V}_{G_{(r)}}(M)$ \cite[Proposition 4.5.2]{Ben:1996}. 
\end{proof}


We first prove our main result in the simplest case.

\begin{lemma}\label{complexity G_a}
For positive integers $r$ and $m$, \[c_{\mathbb G^m_a(\mathbb F_{p^r})}(M)\le c_{(\mathbb G^m_a)_{(r)}}(M)\] for any $\mathbb G^m_a$-module $M$.
\end{lemma}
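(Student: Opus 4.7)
The plan is to realize both support varieties as closed subvarieties of a common affine space $\mathbb{A}^{rm}$, via Carlson rank varieties on the finite-group side and Suslin--Friedlander--Bendel one-parameter subgroups on the Frobenius-kernel side, and then to prove the set-theoretic containment $\mathcal{V}_{\mathbb{G}^m_a(\mathbb{F}_{p^r})}(M)\subseteq \mathcal{V}_{(\mathbb{G}^m_a)_{(r)}}(M)$. Taking dimensions will yield the complexity inequality immediately.

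First I would note that $\mathbb{G}^m_a(\mathbb{F}_{p^r})\cong(\mathbb{Z}/p)^{rm}$ is elementary abelian of rank $rm$, so by Carlson's rank-variety theorem $\mathcal{V}_{\mathbb{G}^m_a(\mathbb{F}_{p^r})}(M)$ is canonically a closed subvariety of $\mathbb{A}^{rm}\cong\mathrm{Rad}(k\mathbb{G}^m_a(\mathbb{F}_{p^r}))/\mathrm{Rad}^2$, cut out by non-freeness of the shifted cyclic generators $u_{\bar{\alpha}}=g_{\bar{\alpha}}-1$. On the Frobenius-kernel side, SFB identify $\mathcal{V}_{(\mathbb{G}^m_a)_{(r)}}(M)$ with the subvariety of $\Hom(\mathbb{G}_{a,(r)},(\mathbb{G}^m_a)_{(r)})\cong \mathbb{A}^{rm}$ parametrizing one-parameter subgroups $\phi_{\bar{\alpha}}$ along which $\phi_{\bar{\alpha}}^*M$ fails to be free over $(\mathbb{G}_a)_{(r)}$. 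Fixing an $\mathbb{F}_p$-basis of $\mathbb{F}_{p^r}$, a single vector $\bar{\alpha}\in\mathbb{A}^{rm}$ simultaneously determines both a shifted cyclic subalgebra $\langle u_{\bar{\alpha}}\rangle\subseteq k\mathbb{G}^m_a(\mathbb{F}_{p^r})$ and a one-parameter subgroup $\phi_{\bar{\alpha}}$ into $(\mathbb{G}^m_a)_{(r)}$; the goal is then to prove, at each such $\bar{\alpha}$, that freeness of $\phi_{\bar{\alpha}}^*M$ forces freeness of $M$ over $\langle u_{\bar{\alpha}}\rangle$.

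The main obstacle is to carry out this Jordan-type comparison between the two nilpotent endomorphisms of $M$. The shifted generator $u_{\bar{\alpha}}$ acts on $M$ via the full ``exponential-type'' series $\sum_{n\geq 1}T^{(n)}|_M$ associated to $\bar{\alpha}$, while $\phi_{\bar{\alpha}}$ contributes only the $p^i$-th divided-power truncations; crucially, both expressions are available because the two restrictions come from the ambient rational $\mathbb{G}^m_a$-structure on $M$. I would reduce to the rank-one case $m=1$ by restricting along linear embeddings $\mathbb{G}_a\hookrightarrow\mathbb{G}^m_a$, and then attack the one-dimensional case by showing that freeness over the restricted enveloping algebra of $(\mathbb{G}_a)_{(r)}$ forces each $T^{(p^i)}|_M$ to act with full $p$-length Jordan structure, which in turn forces the corresponding property for the full series $u_{\bar{\alpha}}$ via a direct polynomial identity in the operators $T^{(n)}|_M$. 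The reduction to $m=1$ and the one-dimensional Jordan-type argument are where I expect essentially all of the difficulty to reside.
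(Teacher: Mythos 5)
Your strategy is genuinely different from the paper's, and as sketched it contains two real gaps. First, the reduction to $m=1$ via linear embeddings $\mathbb{G}_a\hookrightarrow\mathbb{G}^m_a$ does not reach all of the shifted cyclic subgroups that define the Carlson rank variety: a point $\bar{\alpha}\in\mathbb{A}^{rm}$ determines a shifted unit $u_{\bar{\alpha}}=1+\sum_i \alpha_i(g_i-1)$ inside the group algebra $k\,\mathbb{G}^m_a(\mathbb{F}_{p^r})$, and for generic $\bar{\alpha}$ the subalgebra $\langle u_{\bar{\alpha}}\rangle$ is not the group algebra of any subgroup, let alone one that extends to a closed rational subgroup $\mathbb{G}_a\subseteq\mathbb{G}^m_a$; so "restricting along a $\mathbb{G}_a$-embedding" never sees the points you need. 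Second, and more fundamentally, the claimed pointwise containment $\mathcal{V}_{\mathbb{G}^m_a(\mathbb{F}_{p^r})}(M)\subseteq \mathcal{V}_{(\mathbb{G}^m_a)_{(r)}}(M)$ under the naive identification (the \emph{same} vector $\bar{\alpha}$ parametrizing both the shifted cyclic subalgebra and the one-parameter subgroup) is asserted, not proved. The operator $u_{\bar{\alpha}}-1$ is the full exponential series in the divided-power operators $T^{(n)}|_M$, while the Suslin--Friedlander--Bendel generator attached to $\phi_{\bar{\alpha}}$ is built only from the truncations $T^{(p^i)}|_M$; there is no a priori polynomial identity forcing freeness over the latter to imply freeness over the former at the same $\bar{\alpha}$, and this Jordan-type comparison is exactly where the entire content of the lemma lies. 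You flag this yourself, so the proposal is really an outline with the decisive step missing.

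The paper sidesteps all of this by a change of categories rather than a geometric comparison. It identifies $\mathbb{G}^m_a(\mathbb{F}_{p^r})\cong(\mathbb{Z}/p)^{mr}\cong\mathbb{G}^{mr}_a(\mathbb{F}_p)$ as abstract elementary abelian $p$-groups, reducing to $r=1$; it then quotes Lin--Nakano (their Theorem 3.4(3.4.2)) for the inequality $c_{\mathbb{G}^{mr}_a(\mathbb{F}_p)}(M)\le c_{(\mathbb{G}^{mr}_a)_{(1)}}(M)$, and finally converts the right-hand side to $c_{(\mathbb{G}^m_a)_{(r)}}(M)$ using the equivalence between $(\mathbb{G}_a)_{(r)}$-modules and $(\mathbb{G}_a^{\,r})_{(1)}$-modules from Suslin--Friedlander--Bendel (Prop.\ 6.5 of the second paper). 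If you insist on your route you would essentially be re-deriving both the Lin--Nakano computation and the SFB equivalence from scratch, in a form where the required compatibility is not obvious; it is both shorter and safer to cite them as the paper does.
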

\begin{proof}
As $\mathbb G^m_a(\mathbb F_{p^r})$ is an elementary abelian group, we have \[\mathbb G^m_a(\mathbb F_{p^r})\cong(\mathbb Z_p)^{mr}\cong \mathbb G^{mr}_a(\mathbb F_{p}).\]
Then applying \cite[Theorem 3.4(3.4.2)]{LN}, we obtain
\[ c_{\mathbb G^{mr}_a(\mathbb F_{p})}(M)\le c_{\mathbb G^{mr}_{a(1)}}(M)=c_{(\mathbb G^m_{a})_{(r)}}(M), \]
where the last equality follows from the equivalence between categories of $\mathbb G_{a(r)}$-modules and $\mathbb G^r_{a(1)}$-modules, \cite[Proof of Prop. 6.5]{SFB2:1997}. Hence, we have proved the inequality.
\end{proof}

Here comes the main theorem.

\begin{theorem}\label{complexity bound}
Let $G$ be a simple algebraic group.
Suppose $p$ is a good prime for $G$.
For any rational $G$-module $M$, we have
\[  c_{U(\mathbb F_{p^r})}(M)\le c_{U_{(r)}}(M), \]
and \[ c_{G(\mathbb F_{p^r})}(M)\le \frac{r}{r+1}c_{G_{(r)}}(M). \]
\end{theorem}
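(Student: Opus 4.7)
The plan is to deduce the $G$-inequality from the $U$-inequality via the chain
\[ c_{G(\F_{p^r})}(M)\overset{(1)}{\le}c_{U(\F_{p^r})}(M)\overset{(2)}{\le}c_{U_{(r)}}(M)\overset{(3)}{=}c_{B_{(r)}}(M)\overset{(4)}{\le}\tfrac{r}{r+1}c_{G_{(r)}}(M). \]
Step (1) holds because $U(\F_{p^r})$ is a Sylow $p$-subgroup of $G(\F_{p^r})$: by the Bruhat decomposition, both $[G(\F_{p^r}):B(\F_{p^r})]$ (the number of $\F_{p^r}$-points of the flag variety) and $[B(\F_{p^r}):U(\F_{p^r})]=|T(\F_{p^r})|$ are coprime to $p$, so restriction on cohomology is split injective. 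Step (3) is noted in the proof of Corollary \ref{inequality of complexity B_r and G_r}: since $T_{(r)}$ is diagonalizable (hence linearly reductive), cohomology of $B_{(r)}$ coincides with that of $U_{(r)}$. Step (4) is Corollary \ref{inequality of complexity B_r and G_r} itself.

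The heart of the proof is step (2), $c_{U(\F_{p^r})}(M)\le c_{U_{(r)}}(M)$, which we establish by induction on $\dim U$. The base case $U=\mathbb{G}_a$ is Lemma \ref{complexity G_a}. For the inductive step, pick a one-dimensional central normal subgroup $Z\cong\mathbb{G}_a$ of $U$ (which exists by nilpotency), and set $Q=U/Z$. The short exact sequences
\[ 1\to Z(\F_{p^r})\to U(\F_{p^r})\to Q(\F_{p^r})\to 1 \quad\text{and}\quad 1\to Z_{(r)}\to U_{(r)}\to Q_{(r)}\to 1 \]
give rise to Lyndon--Hochschild--Serre spectral sequences with $E_2$-terms $\opH^i(Q(\F_{p^r}),\opH^j(Z(\F_{p^r}),M))$ and $\opH^i(Q_{(r)},\opH^j(Z_{(r)},M))$ converging to $\opH^\bullet(U(\F_{p^r}),M)$ and $\opH^\bullet(U_{(r)},M)$ respectively. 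Lemma \ref{complexity G_a} controls the $j$-direction uniformly, while the inductive hypothesis applied to $Q$ controls the $i$-direction.

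The main obstacle is that the standard estimate $c_H(M)\le c_N(M)+c_{H/N}(k)$ extracted from such a spectral sequence only gives independent upper bounds on each side of (2), not a comparison between them. To overcome this, one would perform the comparison at the level of support varieties: in good characteristic, the exponential map $\exp\colon\fraku\to U$ sends elementary abelian $p$-subgroups of $U(\F_{p^r})$ to commutative $\F_p$-subspaces of $\fraku(\F_{p^r})$, and extension of scalars from $\F_p$ to $k$ organizes these into commuting $r$-tuples living in $C_r(\N_{[p]}(\fraku))\cong\calV_{U_{(r)}}(k)$. A careful analysis of rank varieties in both settings, in the spirit of \cite[Thm. 3.4(b)]{LN} for the case $r=1$, then produces an inclusion $\calV_{U(\F_{p^r})}(M)\hookrightarrow\calV_{U_{(r)}}(M)$, yielding the required inequality on dimensions.
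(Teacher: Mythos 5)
Your reduction chain (steps (1), (3), (4)) is exactly what the paper uses, and these steps are sound. The problem is step (2), the inequality $c_{U(\mathbb F_{p^r})}(M)\le c_{U_{(r)}}(M)$. Your proposed induction on $\dim U$ via Lyndon--Hochschild--Serre spectral sequences does not work, as you yourself observe: the standard growth estimate $c_H(M)\le c_N(M)+c_{H/N}(k)$ gives only an upper bound on each side separately and cannot be used to compare the two sides. After identifying this obstacle you fall back to a sketch (``a careful analysis of rank varieties\dots produces an inclusion $\calV_{U(\mathbb F_{p^r})}(M)\hookrightarrow\calV_{U_{(r)}}(M)$''), but this is not carried out, and in fact the two support varieties do not live in a common ambient space, so the meaning of the asserted inclusion would itself need to be constructed.

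The missing idea is the Quillen-style reduction together with a lifting via Serre's theorem. Since $U(\mathbb F_{p^r})$ is a finite $p$-group, $c_{U(\mathbb F_{p^r})}(M)$ equals the maximum of $c_E(M)$ over maximal elementary abelian subgroups $E\le U(\mathbb F_{p^r})$. For such an $E\cong(\Z/p)^m$ with generators $x_1,\ldots,x_m$, Serre's result on one-parameter unipotent subgroups (good characteristic) produces closed subgroups $U_i\cong\mathbb G_a$ of $U$ with $x_i\in U_i(\mathbb F_{p^r})$, and setting $U_{\max}=\prod U_i\cong\mathbb G_a^m$ one gets $U_{\max}(\mathbb F_{p^r})=E$. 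Now Lemma~\ref{complexity G_a} gives $c_E(M)\le c_{(U_{\max})_{(r)}}(M)$, and since $(U_{\max})_{(r)}$ is a closed subgroup scheme of $U_{(r)}$ one has $c_{(U_{\max})_{(r)}}(M)\le c_{U_{(r)}}(M)$ by the naturality of support varieties under restriction to closed subgroups. Taking the maximum over $E$ finishes step~(2). This is a direct, non-inductive argument; the induction you propose is not needed and, as stated, cannot be made to close.
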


\begin{proof}
Since $U(\mathbb F_{p^r})$ is a $p$-Sylow subgroup of $B(\mathbb F_{p^r})$ and $G(\mathbb F_{p^r})$, 
\[ c_{G(\mathbb F_{p^r})}(M)=c_{B(\mathbb F_{p^r})}(M)=c_{U(\mathbb F_{p^r})}(M). \]
Note further that $c_{U_{(r)}}(M)=c_{B_{(r)}}(M)$, see for example \cite[\S 2.3]{NPV:2002}. 
Hence, it suffices to prove the first inequality as the other one follows immediately from Corollary \ref{inequality of complexity B_r and G_r}.
Observe for any finite group $K$ that $c_{K}(M)$ equals to the maximum complexity of $M$ over all maximal elementary abelian subgroups of $K$, so we just need to prove the inequality for each maximal elementary subgroup of $U(\mathbb F_{p^r})$.
Let $E$ be such a subgroup of $U(\mathbb{F}_{p^r})$.
Then $E\cong\mathbb Z_p^m$ for some positive integer $m$ and is generated by $x_1,\ldots,x_m$.
Using \cite[Theorems 1.3 or 1.4]{Se}, for each $x_i$, we can find a monomorphism from $\mathbb G_a$ to $G$ which sends $1$ to $x_i$.
In other words, there is an abelian unipotent $U_i$ in $G$ which is isomorphic to $\mathbb G_a$ and $U_i(\mathbb F_{p^r})$ contains $x_i$ for each $i$. Now let $U_{max}=\bigoplus_{i=1}^m U_i$, we have $U_{max}\cong\mathbb G_a^m$ and $U_{max}(\mathbb F_{p^r})=E$. Now Lemma \ref{complexity G_a} gives us
\[ c_E(M)=c_{U_{max}(\mathbb F_{p^r})}(M)\le c_{U_{max(r)}}(M).  \]
Since $U_{max(r)}$ is a closed subgroup of $U_{(r)}$, we have \[ \mathcal V_{U_{max(r)}}(M)=\mathcal V_{U_{max(r)}}(k)~\bigcap~ \mathcal V_{U_{(r)}}(M)\] by \cite[2.2.10]{NPV:2002}. It follows that $c_{U_{max(r)}}(M)\le c_{U_{(r)}}(M)$, therefore completing our proof.
\end{proof}

From the fact that the complexity of a module is zero if and only if the module is projective, we obtain an alternative proof to the main result of Drupieski in \cite[Theorem 2.3]{Dru} as follows.

\begin{corollary}\label{finalcor}
Let $M$ be a rational $G$-module. If $M$ is projective over $G_{(r)}$, then it is also projective as a module over $G(\mathbb F_{p^r})$. 
\end{corollary}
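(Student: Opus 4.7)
The plan is to deduce the corollary immediately from Theorem \ref{complexity bound} via the standard characterization of projective modules over finite group schemes in terms of complexity. Recall that for any finite group scheme $H$ and any finite-dimensional $H$-module $N$, one has $c_H(N)=0$ if and only if $N$ is projective over $H$; equivalently, $\calV_H(N)=\{0\}$. Both $G_{(r)}$ (infinitesimal) and $G(\F_{p^r})$ (a finite discrete group scheme) are finite group schemes, so this characterization applies to both.

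Given this, the argument is a one-liner. First I would note that if $M$ is projective over $G_{(r)}$, then $c_{G_{(r)}}(M)=0$. Next, I would cite Theorem \ref{complexity bound} to conclude
\[ 0 \le c_{G(\F_{p^r})}(M) \le \frac{r}{r+1}\, c_{G_{(r)}}(M) = 0, \]
so $c_{G(\F_{p^r})}(M)=0$, and therefore $M$ is projective as a $G(\F_{p^r})$-module.

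There is no genuine obstacle at this point: the entire technical content has already been absorbed into Theorem \ref{complexity bound} (which in turn rests on Corollary \ref{inequality of complexity B_r and G_r}, Lemma \ref{complexity G_a}, and Seitz's embedding result for one-parameter subgroups). The only thing worth double-checking is that the hypotheses of Theorem \ref{complexity bound}---namely that $G$ is simple and $p$ is good for $G$---are in force here, which the authors have built into the standing assumptions of the subsection. Nothing further is required.
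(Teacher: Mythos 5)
Your proof is correct and matches the paper's intended argument exactly: the authors introduce the corollary with the remark that ``the complexity of a module is zero if and only if the module is projective,'' and the deduction from Theorem \ref{complexity bound} is precisely the one-liner you wrote. Nothing further is needed.
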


Our last theorem also indicates that commuting varieties may be a source of information about complexity over finite Chevalley groups, in addition to the known relationship with support varieties over Frobenius kernels.
This suggests the following question, a positive answer to which would provide further context for Corollary \ref{finalcor}.

\vspace{0.2cm}
\noindent
{\bf Question:} Given a rational $G$-module $M$, is there any connection between the support varieties for $M$ over $G({\mathbb F}_{p^r})$ and over $G_{(r)}$ (or perhaps $G_{(r-1)}$)?

\begin{appendices}

\section{Appendix: Details of computations}\label{GLApp}

We here collect some details of the computations required to establish the desired inequality in the proof of Lemma \ref{GLblocksize4}.
We first verified the required inequality by ad hoc arguments, but it became apparent that a more systematic computational approach would be beneficial.
We used GAP for all of our computations.
Recall that $x$ is an element of ${\mathcal N}(\g)$ of partition type $[4^a,3^b,2^c,1^d]$ and $(a,b)\neq (0,0)$.
First of all, let $$F(b,r)=\left\{ \begin{array}{cc} \left(\frac{r}{3}+\frac{1}{4}\right)b^2+\frac{2r}{3}-1 & \mbox{if $b\geq 3$,} \\
b(r+b-2)+\lfloor \frac{b^2}{4}\rfloor & \mbox{if $b\leq 2$.} \end{array}\right.$$
Then $\dim C_{r-1}(\mathfrak{gl}_b)+\lfloor\frac{b^2}{4}\rfloor\leq F(b,r)$ by Cor. \ref{rough_dim_commuting_matrices} and \cite{Gu:1992}.
Similarly, let $$G(a,b,c,r) = \left\{ \begin{array}{cc} 0 & \mbox{if $c=0$,} \\
(r-1)(a+b)+1 & \mbox{if $c=1$ and $(a,b)\neq (1,0)$,} \\
(r+1) & \mbox{if $(a,b,c)=(1,0,1)$,} \\
\frac{a^2}{11}+c^2+2ac+2bc+(r-2)(c^2+\frac{(a+b)^2}{2}) & \mbox{if $c\geq {\rm max}\, \{a+b,2\}$,} \\
\frac{a^2}{11}+c^2+2ac+2bc+(r-2)(\frac{c^2}{2}+(a+b)c) & \mbox{if $2\leq c\leq a+b$.} \end{array}\right.$$
Let ${\mathcal Z}{=\mathcal{Z}_{r-1,a,b,c}}$ be the variety identified in the proof of Lemma \ref{GLblocksize4}; then $\dim{\mathcal Z}\leq G(a,b,c,r)$ by Lemma \ref{y,z,w,v,u}.
Let $n=4a+3b+2c+d$ and let $$H(a,b,c,d)=n^2-\dim{\mathfrak z}_\g(x)=n^2-(a+b+c+d)^2-(a+b+c)^2-(a+b)^2-a^2,$$ the dimension of the orbit of $x$.
Finally, let
\begin{align*}
N_1(a,b,c,d,r)=& (r+1)\frac{n^2-1}{4}-  H(a,b,c,d) -F(b,r)-G(a,b,c,r) \\ 
 & -(r-1)(3a^2+5ab+2ac+2ad+b^2+2bc+2bd+cd),\\
N_2(a,b,c,d,r)=& (r+1)\frac{n^2-1}{4}-  H(a,b,c,d) -F(b,r)-G(a,b,c,r) \\ 
 & -(r-1)(3a^2+5ab+2ac+b^2+2bc+2bd+cd)-\min\{rad+\lfloor\frac{d^2}{2}\rfloor,2(r-1)ad\}.
\end{align*}
The inequality $N_2(a,b,c,d,r) > 0$ is slightly stronger than the desired inequality in Lemma \ref{GLblocksize4}, since we have replaced $\lfloor \frac{n^2}{4}\rfloor$ by $\frac{n^2-1}{4}$ and $\lfloor \frac{b^2}{4}\rfloor$ by $\frac{b^2}{4}$ (the latter only when $b\geq 3$).
We observe that $N_2(a,b,c,d,r) > 0$ for all but finitely many exceptions listed in the lemma below.
If $b$ and $d$ have the same parity then $n$ is even and therefore we only need to show $N_2(a,b,c,d,r) > -\frac{r+1}{4}$.

\begin{lemma}\label{A1lem}
Assume $r\geq 7$, and that $a$, $b$ are not both zero.

a) $N_2(a,b,c,d,r) > 0$ for $(a,b,c,d)$ not belonging to the following list:
$$(0,1,0,0),\; (0,1,0,1),\;(0,1,0,2),\;(0,1,1,1),\;(0,1,2,0),\;(0,1,2,1),\;(0,1,2,2),\;(0,1,3,1),\;(0,2,0,2),$$
$$(0,2,1,2),\;(0,2,2,1),\;(0,2,2,2),\;(0,2,2,3),\;(0,3,2,3), \;(1,0,0,0),\;(1,1,0,0).$$


b) $N_2(a,b,c,d,r) > -\frac{r+1}{4}$ for $(a,b,c,d)$ from the following list:
$$(0,1,1,1),\;(0,1,3,1),\;(0,2,0,2),\;(0,2,1,2),\;(0,3,2,3),\;(1,0,0,0).$$
\end{lemma}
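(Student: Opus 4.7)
The quantity $N_2(a,b,c,d,r)$ is a piecewise polynomial: its pieces are determined by the cases in the definitions of $F(b,r)$ and $G(a,b,c,r)$ together with the choice of minimum in the last term. In each piece $N_2$ is affine-linear in $r$, so the plan is to write $N_2 = \alpha(a,b,c,d)\,r + \beta(a,b,c,d)$ within each piece, establish positive semi-definiteness of $\alpha$ as a quadratic form in $(a,b,c,d)$, and then reduce to a finite enumeration of small tuples.

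First, in each piece I would expand $N_2$ symbolically and collect the coefficient of $r$. The term $(r+1)\frac{n^2-1}{4}$ contributes $\frac{n^2}{4}$, which under $n = 4a+3b+2c+d$ is a fixed positive-definite quadratic form in $(a,b,c,d)$; the other subtracted contributions yield quadratic or linear forms. A direct computation, one case at a time, should verify that $\alpha(a,b,c,d)$ is a non-negative quadratic form on $\mathbb{R}_{\geq 0}^4$, positive-definite outside a thin sub-lattice of very small tuples. (For example, in the case $c \geq \max\{a+b,2\}$ and $b \geq 3$ one finds after simplification that $\alpha = \tfrac{a^2}{2}+2ac+ad+\tfrac{5b^2}{12}+bc-\tfrac{bd}{2}+\tfrac{d^2}{4}$ plus a contribution from the $\min$-term, which is easily checked to be positive by completing the square.)

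Second, because $\alpha \geq 0$ and hence $N_2(a,b,c,d,r) \geq N_2(a,b,c,d,7)$ for $r \geq 7$, it suffices to check $N_2(a,b,c,d,7) > 0$ (respectively $> -2$ in part (b)). Using positive-definiteness of $\alpha$ together with an elementary bound on $\beta$, one extracts an explicit constant $M$ such that $N_2(a,b,c,d,7) > 0$ whenever $a+b+c+d > M$; from the exceptional list in the statement, one expects $M$ to be around $10$. For the finitely many tuples with $a+b+c+d \leq M$ I would then run the enumeration in GAP, evaluating $N_2(a,b,c,d,7)$ and comparing to $0$ (or to $-2$ for part (b)); this produces precisely the exceptional list of part (a) and verifies the weaker bound in part (b).

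The principal obstacle is administrative: between the four cases of $G(a,b,c,r)$, the two cases of $F(b,r)$, and the two candidates in the $\min\{\ldots\}$ term, at least a dozen distinct piecewise expressions for $N_2$ must be analyzed, and for each the form $\alpha$ must be checked for positivity. My approach would be to group systematically (first by the value of $G$, then by $F$) and to reuse the positivity analysis across sub-cases; no single case presents a genuine mathematical difficulty, but the combined bookkeeping will be the bulk of the work, which is precisely why the authors delegate it to GAP.
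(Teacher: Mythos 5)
Your proposal matches the paper's argument in all essentials: write $N_i(r)=A_i r + B_i$, show the $r$-coefficient is non-negative on the relevant domain so that $N_2(r)\geq N_2(7)$, establish $N_2(7)>0$ (or $>-2$) for "large" tuples by completing squares piece by piece, and verify the remaining finitely many small tuples computationally. The paper's implementation differs only in minor bookkeeping: it mostly works with the simpler $N_1\leq N_2$, it records explicitly that $A_1\geq 0$ fails at the single point $(0,1,0,1)$ (harmless, since that tuple sits on the exceptional list), and instead of a uniform threshold on $a+b+c+d$ it uses case-tailored bounds (e.g., in the region $b\geq 3$, $2\leq c\leq a+b$ it shows $N_1(7)>0$ whenever $a\geq 4$, or $b\geq 7$, or $c\geq 4$, or $d\geq 10$) — but both variants reduce to the same finite enumeration.
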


\begin{proof}
We observe that $N_1$ and $N_2$ are linear in $r$ and, on each specified range of values of $a,b,c,d$, the coefficients of 
{$N_i(r)$} (for brevity, we just write $N_i(r)$ instead of $N_i(a,b,c,d,r)$) are polynomials in $a,b,c,d$ of degree 2. So we set $$N_i(r)=A_i(a,b,c,d)r+B_i(a,b,c,d).$$
We want to show that $N_2(r) > 0$ for all $r\geq 7$.
We first verify that the coefficient $A_1(a,b,c,d)$ of $r$ is non-negative with one exception: when $(a,b,c,d)=(0,1,0,1)$.
To see this we computed $A_1(a,b,c,d)-\frac{(d-b)^2}{4}$, which equals a constant term (possibly negative) plus a sum of terms of degree at most 2 with positive coefficients.
One observes that $A_1(a,b,c,d)\geq \frac{(d-b)^2}{4}$ unless $(a,b,c,d)=(0,1,0,1)$, from which our assertion follows. Consequently, $A_2(a,b,c,d)\ge A_1(a,b,c,d)\geq 0$ unless $(a,b,c,d)=(0,1,0,1)$.

It follows that with the exception of the case $(a,b,c,d)=(0,1,0,1)$, we have $N_1(r)\geq N_1(7)$ and $N_2(r)\geq N_2(7)$ for all $r\geq 7$, and hence (for (a)) it suffices to show that $N_2(7) > 0$. Since $N_{2}(r)\ge N_{1}(r)$ for each $r$, it will also suffice to show $N_1(7)>0$.
Computing $N_1(7)$, we once more (locally) have a degree 2 polynomial in $a,b,c,d$ which we 
can prove is a sum of non-negative terms and a constant term. It turns out that $N_1(7)$ is positive for all but finitely many exceptions, for which we directly verify the inequality $N_2(7)>0$.
We will go through the two `generic' cases (i) $b\geq 3$, $c\geq \max \{2,a+b\}$, 
(ii) $b\geq 3$, $2\leq c\leq a+b$.

\vspace{0.2cm}
\noindent
{\bf Case (i)}: 
Here $\dim C_{r-1}(\mathfrak{gl}_b)\leq \frac{rb^2}{3}+\frac{2r}{3}-1$ and $\dim {\mathcal 
Z}\leq \frac{a^2}{11}+c^2+2(a+b)c+(r-2)(c^2+\frac{(a+b)^2}{2})$.
Then the coefficient $A_1(a,b,c,d)$ of $r$ in $N_1(r)$ equals:
$$\frac{1}{2}a^2+2ac+\frac{5}{12}b^2+bc-\frac{1}{2}bd+\frac{1}{4}d^2-\frac{11}{12}=\frac{1}{2}a^2+(2a+b)c+\frac{1}{6}b^2+\frac{1}{4}(d-b)^2-\frac{11}{12}$$
which is positive since $\frac{1}{6}b^2\geq \frac{3}{2}$.
Thus $N_1(r)\geq N_1(7)$ for all $r\geq 7$.
Now we proceed to compute $N_1(7)$, which equals:
$$(6a+2b)(c-a-b)+2\left(d-b-\frac{a}{2}\right)^2+\frac{54}{11}a^2+ab+\frac{11}{12}b
^2-\frac{17}{3}.$$
Since $(6a+2b)(c-a-b)$ and $(d-b-\frac{a}{2})^2$ are non-negative, and since $\frac{11}{12}b^2\geq \frac{33}{4}>\frac{17}{3}$, it follows that $N_1(7) > 0$ and hence $N_1(r) > 0$ for all $r\geq 7$.

\vspace{0.2cm}
\noindent
{\bf Case (ii)}: 
Here $\dim C_{r-1}(\mathfrak{gl}_b)\leq \frac{rb^2}{3}+\frac{2r}{3}-1$ and $\dim {\mathcal 
Z}\leq \frac{a^2}{11}+c^2+2(a+b)c+(r-2)(\frac{c^2}{2}+(a+b)c)$.
The coefficient $A_1(a,b,c,d)$ of $r$ equals $\frac{1}{4}(d-b)^2+a^2+ab+ac+\frac{2}{3}b^2+\frac{1}{2}c^2-\frac{11}{12}$, which is positive once more since $\frac{2}{3}b^2\geq 6$.
We now proceed to compute $N_1(7)$, which equals:
$$2\left(d-b-\frac{a}{2}\right)^2 +\frac{17}{12}\left(b-\frac{18c}{17}-\frac{12a}{17}\right)^2+\frac{263}{374}\left(a-\frac{209c}{263}\right)^2+\frac{123}{263}c^2-\frac{17}{3}$$
which is positive for $c\ge 4$. On the other hand,
$$N_1(7)=2\left(d-b-\frac{a}{2}\right)^2 +\frac{17}{12}\left(b-\frac{18c}{17}-\frac{12a}{17}\right)^2+\frac{31}{34}\left(c-\frac{19a}{31}\right)^2+\frac{123}{341}a^2-\frac{17}{3}$$
which is positive for $a\ge 4$, and
$$N_1(7)=2\left(d-b-\frac{a}{2}\right)^2 +\frac{5}{2}\left(c-\frac{3b}{5}+\frac{a}{5}\right)^2+\frac{72}{55}\left(a-\frac{77b}{144}\right)^2+\frac{41}{288}b^2-\frac{17}{3}$$
which is positive for $b\ge 7$. Thus $N_1(7) > 0$ if $a\ge 4$, $b\ge 7$ or $c\ge 4$. It is also clear that $N_1(7)>0$ if $d-b-\frac{a}{2}\ge 2$, so $N_1(7)$ is positive for each $d\ge 10$. For each of the cases satisfying $a\le 3$, $3\le b\le 6$, $c\in \{2,3\}$ and $d\le 9$ we compute $N_2(7)$ and observe that it is positive unless $(a,b,c,d)=(0,3,2,3)$.

\vspace{0.1cm}
We deal similarly with the remaining cases, by inspecting:
$$N_1'(7)=\left\{ \begin{array}{cc}N_1(7)-2(d-b-\frac{a}{2})^2-\frac{3}{2}(a-\frac{2b}{3})^2 & \mbox{if $b\ge 3$ and $c\le 1$}\\
N_1(7)-2(d-b-\frac{a}{2})^2- (6a+2b)(c-a-b) & \mbox{if $b\le 2$ and $c\geq \max\{ 2,a+b\}$,} \\
N_1(7)-2(d-b-\frac{a}{2})^2 - \frac{11}{4}(b-\frac{4a+6c+10}{11})^2-\frac{23}{22}(a-\frac{c+20}{23})^2 & \mbox{if $b\le 2$ and $2\leq c<a+b$,} \\
N_1(7)-2(d-b-\frac{a}{2})^2 - \frac{3}{2}(a-\frac{2}{3}b+\frac{2}{3}c)^2 & \mbox{if $b\le 2$ and $c\leq 1$.}\end{array}\right.$$
We observe that $N_1'(7)>0$ except in the following cases: 

i) $a=0$, $b$ equals 1 or 2 and $c\geq 2$. Here $N_2(7)> 0$ unless $(a,b,c,d)$ belongs to the set $\{(0,1,2,0),(0,1,2,1),(0,1,2,2),(0,1,3,1),(0,2,2,1),(0,2,2,2),(0,2,2,3)\}$.

ii) $c=2$, $b\le 2$ and $a+b>2$, when $N_1'(7)=-\frac{91}{23}{+\frac{b^2}{4}-\lfloor\frac{b^2}{4}\rfloor}$. Here $N_1(7)>0$ unless $(a,b,c,d)\in \{(1,2,2,2),(1,2,2,3),(2,2,2,3)\}$. However, $N_2(7)>0$ also in these cases.

iii) $(b,c)=(2,1)$, when $N_1'(7)=-\frac{2}{3}$. Here $N_2(7)=\max\{2a^2-2ad+2d^2+2a-8d+8,2a^2+3ad+2d^2-\lfloor\frac{d^2}{2}\rfloor +2a-8d+8\}$, which is positive unless $(a,d)=(0,2)$.

iv) $(b,c)=(1,1)$, when $N_1'(7)=-1$.
Here $N_2(7)=\max\{2a^2-2ad+2d^2+2a-4d+1,2a^2+3ad+2d^2-\lfloor \frac{d^2}{2}\rfloor+2a-4d+1\}$, which is positive unless $(a,d)=(0,1)$.

v) $(a,b,c)=(1,0,1)$ when $N_1(7)=2d^2-2d+6>0$. 

vi) $(b,c)=(2,0)$, when $N_1'(7)=-\frac{11}{3}$. Here $N_2(7)=\max\{2a^2-2ad+2d^2-8d+7,2a^2+3ad+2d^2-\lfloor \frac{d^2}{2}\rfloor-8d+7\}$, which is positive unless $(a,d)=(0,2)$.

vii) $(b,c)=(1,0)$, when $N_1'(7)=-\frac{14}{3}$.
Here $N_2(7)=\max\{2a^2-2ad+2d^2-4d-2,2a^2+3ad+2d^2-\lfloor \frac{d^2}{2}\rfloor -4d-2\}$, which is positive unless $a=0$ and $d\leq 2$ or $a=1$ and $d=0$.

viii) $(b,c)=(0,0)$, when $N_1'(7)=-2$.
In this case $N_2(7)=\max\{2a^2-2ad+2d^2-2,2a^2+3ad+2d^2-\lfloor \frac{d^2}{2}\rfloor -2\}$, which is positive unless $a=1$ and $d=0$.

For the special cases in (b), we compute $N_2(r)+\frac{r+1}{4}$ and see that $N_2(7)>-2$ in that cases. Since the coefficient $A_2(a,b,c,d)$ is nonnegative, we get $N_2(r)+\frac{r+1}{4}>0$ for $r\geq 7$.
\end{proof}

To finish, we deal with the remaining special cases.

\begin{lemma}
For $r\ge 7$ the following holds:
\begin{enumerate}
\item
If $x\in \mathcal{N}(\mathfrak{g})$ has associated partition $[3,2^2,1^d]$, then $\dim C'(x)\le (r+1)(4d+10)+14<(r+1)\lfloor\frac{n^2}{4}\rfloor.$
\item
If $x\in \mathcal{N}(\mathfrak{g})$ has associated partition $[3^2,2^2,1^d]$ then $\dim C'(x)\le (r+1)(6d+20)+30<(r+1)\lfloor\frac{n^2}{4}\rfloor.$
\item
If $x\in \mathcal{N}(\mathfrak{g})$ has associated partition $[3,1^d]$ with $d\le 2$, then $\dim C'(x)\le (r+1)(d+2)+\lfloor \frac{d^2}{2}\rfloor +3d+2$. In particular, $\dim C'(x)<(r+1)\lfloor \frac{n^2}{4}\rfloor$ if $d>0$ and $\dim C'(x)\le (r+1)\frac{n^2}{4}$ if $d=0$.
\item
If $x\in \mathcal{N}(\mathfrak{gl}_7)$ has associated partition $[4,3]$, then $\dim C'(x)\le 8r+30<(r+1)\lfloor \frac{49}{4}\rfloor.$
\end{enumerate}
\end{lemma}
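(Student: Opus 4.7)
Each of the four inequalities is a leftover exception from Lemma \ref{A1lem} that must be handled by a sharper, case-specific dimension count, refining the generic argument of Lemma \ref{GLblocksize4}. The unified plan is: for the partition $[4^a, 3^b, 2^c, 1^d]$ in question, parametrise a general $y \in \z_\g(x)$ via the block form \eqref{glmat}; apply Lemma \ref{redcentlem} to kill the reductive-part diagonal entries (legitimate since $\chr k \notin \{2,3\}$ and all block sizes are $\leq 4$); then decompose $\dim C'_{r-1}(\z_\g(x))$ along a short chain of projections, where each fibre dimension is controlled by one of the technical estimates of Section \ref{technical} (Lemmas \ref{yz=0,zy=0}--\ref{y,z,w,v,u}), combined with the rank condition $(x + \lambda y_i + \mu y_j)^m = 0$ and commutativity. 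What changes is that for these small values of $(a,b,c,d)$ the bounds $F(b,r)$ and $G(a,b,c,r)$ used in Lemma \ref{A1lem} are too lossy and must be replaced by their specialisations.

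Cases (1), (2) and (3) all have $a = 0$, hence the blocks indexed by $4$-parts disappear and the variety $\mathcal Z_{r-1, 0, b, c}$ of Lemma \ref{y,z,w,v,u} reduces to a substantially smaller case than $G(a,b,c,r)$ suggests. For case (3), $(a,b,c,d) = (0,1,0,d)$ with $d \leq 2$, Lemma \ref{redcentlem} kills both the scalar part of the $J_3$-centralizer and the entire $\mathfrak{gl}_d$ grade-$0$ block, leaving $\z_\g'(x)$ of dimension $2d+2$; the resulting commutativity constraint on the cross-block vectors matches $\mathcal W_{r-1, 1, d}$ of Lemma \ref{y_iz_j=y_jz_i}, of dimension at most $rd + \lfloor d^2/2 \rfloor$, and the stated bound follows via \eqref{dimC'(x)}. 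Cases (1) and (2), $(0,1,2,d)$ and $(0,2,2,d)$, are similar in flavour: one combines Lemma \ref{y,z,w,v,u}(b) (in its $a=0$ specialisation) for the main cross-blocks, Lemma \ref{y_iz_j=y_jz_i} for the $1$-block couplings, and the exact dimension of $C_{r-1}(\mathfrak{gl}_b)$ from \cite{Gu:1992} (recorded in Remark \ref{glbrem}) for the small $F_2^{(i)}$ part when $b = 2$.

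Case (4), partition $[4,3]$ in $\mathfrak{gl}_7$, is qualitatively different as it is the only case with a Jordan block of size $4$. Here $(a,b,c,d) = (1,1,0,0)$ and $\dim \z_\g(x) = 13$; the claim becomes $\dim C'_{r-1}(\z_\g(x)) \leq 8r - 6$. After Lemma \ref{redcentlem} the surviving blocks in \eqref{glmat} are $A_2^{(i)}, A_3^{(i)}, A_4^{(i)}, B_1^{(i)}, B_2^{(i)}, B_3^{(i)}, E_1^{(i)}, E_2^{(i)}, E_3^{(i)}, F_2^{(i)}, F_3^{(i)}$ (each a scalar, since $a = b = 1$), totalling $11$ parameters per tuple element. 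The crucial input is the rank condition: expanding $(x + \lambda y_i + \mu y_j)^4 = 0$ as in the proof of Lemma \ref{GLblocksize4} yields $B_1^{(i)} E_1^{(j)} = 0$, so the pair $\bigl((B_1^{(i)}), (E_1^{(i)})\bigr)$ lies in $\mathcal Y_{r-1, 1, 1}$ of dimension at most $r - 1$ by Lemma \ref{y_iz_j=0}. Projecting this pair out and invoking the commutativity relations (which impose bilinear conditions among the remaining nine scalar blocks and link $A$-blocks to $B$-blocks and $E$-blocks) then gives the estimate $8r - 6$ by direct count.

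The main technical obstacle across the four cases is precision. Each of the estimates used in Lemma \ref{A1lem} carries a small amount of slack that, in these exceptional cases, suffices to overshoot the target $(r+1) \lfloor n^2/4 \rfloor$, so one cannot simply quote $F(b,r)$ and $G(a,b,c,r)$ as stated. The bookkeeping is routine but must simultaneously exploit (i) the vanishing of the grade-$0$ component of the centralizer via Lemma \ref{redcentlem}, (ii) the quadratic consequences $B_1 E_1 = 0$ and $G_1 K_1 = 0$ of $(x + \lambda y_i + \mu y_j)^m = 0$ (where the hypothesis $\chr k \notin \{2,3\}$ is essential to isolate these bilinear terms from higher-order ones), and (iii) the sharper $a = 0$ specialisation of Lemma \ref{y,z,w,v,u}(b).
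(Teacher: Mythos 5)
Your overall strategy is the same as the paper's (specialize the chain of estimates from Lemma~\ref{GLblocksize4} to the exceptional small partition types), and your treatment of case (3) matches the paper's exactly. However, cases (1) and (2) contain a genuine numerical gap. You propose to bound $\dim\mathcal Z_{r-1,0,b,2}$ by ``Lemma~\ref{y,z,w,v,u}(b) in its $a=0$ specialisation'', but that \emph{stated} bound, specialized to $a=0$, is exactly $G(0,b,2,r)$ -- the very bound you have just declared too lossy. It gives $\dim\mathcal Z_{r-1,0,1,2}\le \tfrac92 r-1$ and $\dim\mathcal Z_{r-1,0,2,2}\le 6r$, and when fed into the chain of projections these only yield $\dim C'(x)\le (r+1)\lfloor n^2/4\rfloor$ \emph{without} strict inequality at the boundary $r=7$: for instance for $[3,2^2]$ (i.e. $(a,b,c,d)=(0,1,2,0)$, $n=7$) one gets $\dim C'(x)\le 96$ while $(r+1)\lfloor n^2/4\rfloor = 96$. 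The paper instead re-opens the \emph{proof} of Lemma~\ref{y,z,w,v,u} and maximizes the displayed upper bound on $\dim\mathcal Z_{m,l}$ only over the finitely many integer pairs $(m,l)$ with $2m+l\le c=2$, obtaining the sharper values $4r$ (for $b=1$) and $6r-2$ (for $b=2$); this extra two units of precision is exactly what makes the strict inequality go through at $r=7$, and it is missing from your write-up.

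Your sketch of case (4) is also too thin to deliver the exact count $8r-6$. You propose projecting out only the pair $(B_1,E_1)$ and finishing ``by direct count'' on the remaining nine scalar families, but the closing step in the paper is more specific: it projects onto \emph{eight} of the eleven scalar families, bounds that image by $7(r-1)$ using the $\mathcal Y_{r-1,1,1}$ constraint $b_1^{(i)}c_1^{(j)}=0$, and then identifies the fibre over $0$ -- parametrised by the remaining three families $(b_2,c_2,d_1)$ -- with the determinantal variety of $3\times(r-1)$ matrices of rank at most one, of dimension $r+1$. The rank-one condition on that triple is forced by the commutator entries in positions $(1,7)$, $(1,4)$ and $(5,4)$, and it is this ad hoc identification (not a generic application of the technical lemmas) that closes the count; your proposal leaves it unstated.
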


\begin{proof}
For (1) and (2) we have $a=0$, therefore the variety ${\mathcal{Y}_{r-1,a,b}}$ identified in the proof of Lemma \ref{GLblocksize4} is 0-dimensional. Moreover, $b\le 2$, therefore
$$\dim C_{r-1}'(\mathfrak{z}_{\mathfrak{g}}(x))\le (r-1)(b^2+4b+2bd+2d)+b(r+b-2)+\dim \mathcal{Z}_{r-1,0,b,2}.$$
By the proof of Lemma \ref{y,z,w,v,u} we get
$$\dim \mathcal{Z}_{r-1,0,b,2}\le \max_{m,l}\left(4+4b-2m^2-2ml+\lfloor \frac{l^2}{4}\rfloor -\lceil \frac{l^2}{2}\rceil +(r-2)(4-2m^2-2ml-l^2+b(2m+l))\right).$$
Since $2m+l\le 2$, we get $(m,l)\in \{(0,0),(0,1),(0,2),(1,0)\}$. It can be easily verified that the above maximum is equal $4r$ for $b=1$ and $6r-2$ for $b=2$. Since $n^2-\dim \mathfrak{z}_{\mathfrak{g}}(x)=6b^2+4bd+16b+4d+8$, the required estimates for $\dim C'(x)$ in (1) and (2) immediately follow. To show that $\dim C'(x){<} (r+1)\lfloor \frac{n^2}{4}\rfloor$ observe that $(r+1)\lfloor \frac{(d+7)^2}{4}\rfloor -(r+1)(4d+10)=(r+1)(\lfloor \frac{(d-1)^2}{4}\rfloor +2)\ge 2(r+1)>14$ and $(r+1)\lfloor \frac{(d+10)^2}{4}\rfloor -(r+1)(6d+20)=(r+1)(\lfloor \frac{(d-2)^2}{4}\rfloor +4)\ge 4(r+1)>30$ for $r\ge 7$.

For (3), we may assume
$$x=\begin{bmatrix} 0 & 1 & 0 & 0\\ 0 & 0 & 1 & 0 \\ 0 & 0 & 0 & 0 \\ 0 & 0 & 0 & 0
\end{bmatrix},\quad \mathrm{so}\quad \mathfrak{z}_{\mathfrak{g}}'(x)=\left\{\begin{bmatrix} 0 & u & v & z^T\\ 0 & 0 & u & 0 \\ 0 & 0 & 0 & 0\\ 0 & 0 & w & 0
\end{bmatrix};u,v\in k,z,w\in k^d\right\}.$$
It is clear now that $C_{r-1}'(\mathfrak{z}_{\mathfrak{g}}(x))$ is the product of the affine space $k^{2(r-1)}$ and the variety $\mathcal{W}_{r-1,1,d}$ defined in Lemma \ref{y_iz_j=y_jz_i}. Hence $\dim C_{r-1}'(\mathfrak{z}_{\mathfrak{g}}(x))\le 2(r-1)+rd+\lfloor \frac{d^2}{2}\rfloor$ and $\dim C'(x)\le (r+1)(d+2)+\lfloor \frac{d^2}{2}\rfloor +3d+2$.

For (4), we may assume
$$x=\begin{bmatrix} 0 & 1 & 0 & 0 & 0 & 0 & 0 \\0 & 0 & 1 & 0 & 0 & 0 & 0 \\0 & 0 & 0 & 1 & 0 & 0 & 0 \\0 & 0 & 0 & 0 & 0 & 0 & 0 \\0 & 0 & 0 & 0 & 0 & 1 & 0 \\0 & 0 & 0 & 0 & 0 & 0 & 1 \\0 & 0 & 0 & 0 & 0 & 0 & 0
\end{bmatrix}, \quad \mathrm{so} \quad \mathfrak{z}_{\mathfrak{gl}_7}(x)\cap \mathcal{N}(\mathfrak{gl}_7)=\left\{
\begin{bmatrix} 0 & a_1 & a_2 & a_3 & b_1 & b_2 & b_3 \\0 & 0 & a_1 & a_2 & 0 & b_1 & b_2 \\0 & 0 & 0 & a_1 & 0 & 0 & b_1 \\0 & 0 & 0 & 0 & 0 & 0 & 0 \\0 & c_1 & c_2 & c_3 & 0 & d_1 & d_2 \\0 & 0 & c_1 & c_2 & 0 & 0 & d_1 \\0 & 0 & 0 & c_1 & 0 & 0 & 0
\end{bmatrix};a_1,\ldots ,d_2\in k\right\}.$$
It is clear that a matrix as above belongs to $\mathfrak{z}_{\mathfrak{gl}_7}'(x)$ if and only if $b_1c_1=0$. Let $\pi \colon C_{r-1}'({\mathfrak{z}_\g(}x)\to k^{8(r-1)}$ be the projection sending $(y_1,\ldots ,y_{r-1})\in C_{r-1}'({\mathfrak{z}_\g(}x)$ to the collection of all $a_1$'s, $a_2$'s, $a_3$'s, $b_1$'s, $b_3$'s, $c_1$'s, $c_3$'s and $d_2$'s. The theorem on dimensions of fibres yields
$\dim C_{r-1}'({\mathfrak{z}_\g(}x)\le 7(r-1)+\dim \pi ^{-1}(0,\ldots ,0)$. However, $\pi ^{-1}(0,\ldots ,0)$ is isomorphic to the determinantal variety of all $3\times (r-1)$ matrices of rank {at most} 1 and is therefore of dimension $r+1$. Hence $\dim C_{r-1}'({\mathfrak{z}_\g(}x)\le 8r-6$ and $\dim C'(x)\le 8r+30$. We note that in the case $(a,b,c,d)=(1,1,0,0)$ we obtain $N_{2}(7)=0$. Since we are interested in showing that the set(s) defined in Cor. \ref{square_zeroC'(x)} is (are) the only irreducible component(s) of maximal dimension, this is not quite enough for our considerations when $r=7$.
\end{proof}

\end{appendices}

\bibliographystyle{alpha}
\bibliography{commvarsbib}

\end{document}